\newtheorem{theorem}{Theorem}[section]
\newtheorem{lemma}[theorem]{Lemma}
\newtheorem{proposition}[theorem]{Proposition}
\newtheorem*{otherstheorem}{Theorem}
\theoremstyle{remark}
\newtheorem{remark}[theorem]{Remark}
\theoremstyle{definition}
\newtheorem{definition}[theorem]{Definition}
\numberwithin{equation}{section}
\DeclareMathOperator{\ad}{\mathrm{ad}}
\DeclareMathOperator{\Aut}{\mathrm{Aut}}
\DeclareMathOperator{\Atp}{\mathrm{Atp}}
\DeclareMathOperator{\Bij}{\mathrm{Bij}}
\DeclareMathOperator{\Der}{\mathrm{Der}}
\DeclareMathOperator{\Doro}{\mathcal{D}}
\DeclareMathOperator{\Endo}{\mathrm{End}}
\DeclareMathOperator{\Hom}{\mathrm{Hom}}
\DeclareMathOperator{\Id}{\mathrm{Id}}
\DeclareMathOperator{\Lie}{\mathcal{L}}
\DeclareMathOperator{\MH}{\mathcal{MH}}
\DeclareMathOperator{\morco}{\mathrm{Coalg}}
\DeclareMathOperator{\M}{\mathcal{M}}
\DeclareMathOperator{\Mult}{\mathrm{Mlt}}
\DeclareMathOperator{\Nalt}{\mathrm{N}_{\mathrm{alt}}}
\DeclareMathOperator{\PsAut}{\mathrm{PsAut}}
\DeclareMathOperator{\sig}{\mathrm{sig}}
\DeclareMathOperator{\W}{\mathcal{W}}
\newcommand{\C}{\mathcal{C}}
\newcommand{\ep}{\epsilon}
\newcommand{\g}{\mathfrak g}
\newcommand{\Jac}{\mathsf{J}}
\newcommand{\m}{\mathfrak m}
\newcommand{\Oc}{\mathbb O}
\newcommand{\set}[1]{\left\{#1\right\}}
\newcommand{\spann}[1]{\operatorname{span}\langle #1\rangle}
\newcommand{\Sthree}{\mathsf{S}}
\newcommand{\ci}{{c_{(1)}}}
\newcommand{\cii}{{c_{(2)}}}
\newcommand{\ciii}{{c_{(3)}}}
\newcommand{\civ}{{c_{(4)}}}
\newcommand{\cv}{{c_{(5)}}}
\newcommand{\mmi}{{m_{(1)}}}
\newcommand{\mmii}{{m_{(2)}}}
\newcommand{\mmiii}{{m_{(3)}}}
\newcommand{\mmiv}{{m_{(4)}}}
\newcommand{\mmv}{{m_{(5)}}}
\newcommand{\nni}{{n_{(1)}}}
\newcommand{\nnii}{{n_{(2)}}}
\newcommand{\uui}{u_{(1)}}
\newcommand{\uuii}{u_{(2)}}
\newcommand{\uuiii}{u_{(3)}}
\newcommand{\xxi}{{x_{(1)}}}
\newcommand{\xxii}{{x_{(2)}}}
\newcommand{\xxiii}{{x_{(3)}}}
\begin{document}
\title[Hopf algebras with triality]{Hopf algebras with triality}%
\author{Georgia Benkart \and Sara Madariaga \and Jos\'e M. P\'erez--Izquierdo}%
\address{Department of Mathematics, University of Wisconsin, Madison, Wisconsin 53706 USA}
\email{benkart@math.wisc.edu}
\address{Dpto. Matem\'aticas y Computaci\'on, Universidad de La Rioja, 26006, Logro\~no, Espa\~na}
\email{sara.madariaga@unirioja.es}
\address{Dpto. Matem\'aticas y Computaci\'on, Universidad de La Rioja, 26006, Logro\~no, Espa\~na}%
\email{jm.perez@unirioja.es}%
\thanks{Jos\'e M. P\'erez-Izquierdo and Sara Madariaga would like to thank Spanish Ministerio
de Educaci\'on y Ciencia and FEDER MTM 2007-67884-C04-03 and the University of La
Rioja.  Sara Madariaga also thanks support from Spanish MICINN grant
AP2007-01986 and ATUR 09/22.}%
\subjclass[2010]{16T05, 20N05, 17D99}%
\keywords{Triality, Hopf algebras, Moufang-Hopf algebras, Moufang loops, Malcev algebras, Lie algebras, groups, nonassociative algebra}%

\begin{abstract}
In this paper we revisit and extend the constructions of Glauberman and Doro on groups with triality and Moufang loops to Hopf algebras. We prove that the universal enveloping algebra of any Lie algebra with triality is a Hopf algebra with triality. This allows us to give a new construction of the universal enveloping algebras of Malcev algebras. Our work relies on the approach of Grishkov and Zavarnitsine to groups with triality.
\end{abstract}
\maketitle
\section{Introduction}

Recall that a \emph{loop} $(Q,\cdot, e)$ is a set with a binary operation $\cdot \colon Q \times Q \to Q$ $(a,b) \mapsto a b$ and a unit element $e \in Q$, i.e. $e a = a = a e$ for any $a \in Q$, such that the multiplication operators $L_a \colon b \mapsto a b$ and $R_b \colon a \mapsto a b$ are bijective for any $a, b \in Q$ \cites{Br58,Pf90}. Roughly speaking a loop is a nonassociative group, or more precisely a group is a loop that in addition satisfies the associative law $(xy) z = x (yz)$.

In the nonassociative setting other loops apart from groups are of interest. One of them is the seven dimensional sphere of octonions of norm $1$. This sphere has no structure of a Lie group, however with the product inherited from the octonions,  it satisfies the (left, middle and right) \emph{Moufang identities}
\begin{displaymath}
a(x(ay)) = ((ax)a)y , \quad (a(xy))a = (ax)(ya) \quad \text{and} \quad ((xa)y)a = x(a(ya))
\end{displaymath}
for any $a, x, y \in Q$, so in some sense this product is nearly associative. Loops that satisfy any of these identities also satisfy the others, and they are called \emph{Moufang loops}.

To any loop $Q$ is attached the group generated by the multiplication operators $\{ L_a , R_ a \mid a \in Q \}$, its \emph{multiplication group} $\Mult(Q)$ and sometimes this group has a strong connection with the structure of $Q$.   This idea of relating loops with groups has been very fruitful for Moufang loops following the work of Glauberman  \cite{Gl68} and Doro \cite{Do78} and especially in recent years \cites{Mi93,GaHa05,GrZa05,GrZa06,GrZa09}. Glauberman observed that the multiplication operators on a Moufang loop $Q$ with identity $1$ satisfy
\begin{equation}
\label{eq:Doro}
    \begin{array}{lll}
      P_1 = L_1 = R _1 = 1, & P_x L_x R_x = 1, &  \\
      L_{xyx} = L_xL_yL_x,  &  R_{xyx} = R_xR_yR_x, & P_{xyx} = P_xP_yP_x, \\
      L_{y^{-1}x} = R_yL_xP_y, & R_{y^{-1}x} = P_yR_xL_y, & P_{y^{-1}x} = L_yP_xR_y, \\
      L_{xy^{-1}} = P_yL_xR_y, & R_{xy^{-1}} = L_yR_xP_y &\text{and }  P_{xy^{-1}} = R_yP_xL_y,
    \end{array}
\end{equation}
where $P_x = R_{x}^{-1}L_{x}^{-1}$. The group $\Doro(Q)$ generated by the symbols $\{ L_x, R_x, P_x \mid x \in Q\}$ subject to relations (\ref{eq:Doro}) inherits two automorphisms $\rho, \sigma$ with $\sigma^2 = \rho^3 = \Id_{\Doro(Q)}$ and $\sigma \rho = \rho^2 \sigma$ such that
\begin{equation}
    \begin{array}{lll}
        P^\rho_x = L_x & L^\rho_x = R_x & R^\rho_x = P_x\\
        P^\sigma_x = P^{-1}_x & L^\sigma_x = R^{-1}_x & R^\sigma_x = L^{-1}_x,
    \end{array}
\end{equation}
due to the symmetries of relations (\ref{eq:Doro}).  They afford a representation of the symmetric group on three letters $\Sthree_3$ as automorphisms of $\Doro(Q)$. One important insight was that
\begin{equation}
\label{eq:triality_Group}
    (g^{-1}g^\sigma)(g^{-1}g^\sigma)^\rho (g^{-1}g^\sigma)^{\rho^2} = 1
\end{equation}
holds for any $g$ in $\Doro(Q)$. Groups $G$ with a representation of $\Sthree_3$ as automorphisms satisfying (\ref{eq:triality_Group}) are called \emph{groups with triality} (relative to $\rho$ and $\sigma$). Surprisingly enough, Doro showed that the construction of a group with triality from a Moufang loop can be reversed.\footnotemark[1]\footnotetext[1]{A detailed and illuminating study of the connections between certain categories of Moufang loops and groups with triality can be found in \cite{Ha1}, where a slightly different definition of a group with triality is used.} We present a simple approach by Grishkov and Zavarnitsine to the construction of a Moufang loop from a group with triality instead of Doro's original approach,  since the former avoids the use of symmetric spaces and cosets.

\begin{otherstheorem}[\cite{GrZa06}]
Given a group with triality $G$, the set $\M(G) = \{ g^{-1}g^\sigma \mid g \in G \}$ is a Moufang loop with respect to the multiplication law
\begin{displaymath}
    m\cdot n = m^{-\rho} n m^{-\rho^2} = n^{-\rho^2} m n^{-\rho} \quad \forall_{m,n \in \M(G).}
\end{displaymath}
\end{otherstheorem}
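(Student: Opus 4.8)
The plan is to treat $\M(G)$ as a subset of $G$ and to verify the loop axioms and a Moufang identity directly from the defining relations, after first extracting a small toolkit of identities. Writing $\theta(g)=g^{-1}g^\sigma$, one has $1=\theta(1)\in\M(G)$, and for $m=\theta(g)$ a one-line computation gives $m^\sigma=(g^\sigma)^{-1}g=m^{-1}$; moreover $m^{-1}=\theta(g^\sigma)$, so $\M(G)$ is closed under inversion. The relation $\sigma\rho=\rho^2\sigma$ then yields the ``reflection table'' $m^{\rho\sigma}=m^{-\rho^2}$ and $m^{\rho^2\sigma}=m^{-\rho}$. The decisive observation is that (\ref{eq:triality_Group}) applies to $\theta(g)$ and to $\theta(g^\sigma)=m^{-1}$ simultaneously, giving both $m\,m^\rho\,m^{\rho^2}=1$ and $m^{-1}m^{-\rho}m^{-\rho^2}=1$. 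Rearranging these shows $m^\rho m^{\rho^2}=m^{-1}=m^{\rho^2}m^\rho$, so that $m,m^\rho,m^{\rho^2}$ pairwise commute and in particular $m^{-\rho}m^{-\rho^2}=m$.

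Next I would establish closure. Writing $m=\theta(a)$, $n=\theta(b)$ and expanding $m^{-\rho}n\,m^{-\rho^2}$ using only that $\rho,\sigma$ are automorphisms with $\sigma\rho=\rho^2\sigma$, one finds $m^{-\rho}n\,m^{-\rho^2}=Y^{-1}Y^\sigma$ for the explicit element $Y=b\,(a^\rho)^{-1}(a^{\rho^2})^\sigma$; hence $m\cdot n\in\M(G)$. Note this step does not use triality. The same manipulation exhibits $L_m\colon n\mapsto m^{-\rho}n\,m^{-\rho^2}$ as the restriction to $\M(G)$ of a bijection of $G$; since $m\cdot(m^{-1}\cdot n)=n$ is immediate, its inverse on $\M(G)$ is $L_{m^{-1}}$, and $1$ acts as a left identity. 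Combined with the commuting relations one checks $m\cdot m^{-1}=m^{-1}\cdot m=1$, so inverses behave correctly.

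The heart of the matter, and the step I expect to be the main obstacle, is the equality of the two expressions for the product, namely $m^{-\rho}n\,m^{-\rho^2}=n^{-\rho^2}m\,n^{-\rho}$. Conjugating and cancelling reduces this to $n^{\rho^2}m^{-\rho}n\,m^{-\rho^2}n^\rho=m$, and a short rewriting shows the left-hand side times $m^{-1}$ equals $h\,h^\rho\,h^{\rho^2}$ for the \emph{mixed} element $h=n^{\rho^2}m^{-\rho}$. Thus well-definedness is precisely the assertion that this triality-shaped product vanishes. The difficulty is that $h$ need not lie in $\M(G)$, so (\ref{eq:triality_Group}) does not apply to $h$ directly and the identity does not follow from the relations on $m$ and $n$ separately; one must instead produce the missing cross-relation between $m$ and $n$ by invoking (\ref{eq:triality_Group}) at arguments built from $a$ and $b$ and assembling the results. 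The cleaner bookkeeping I would actually use is to pass to $\widehat G=G\rtimes\langle\rho,\sigma\rangle$, where $m\sigma=g^{-1}\sigma g$ is $G$-conjugate to $\sigma$, identifying $\M(G)$ with the conjugacy class $\sigma^{G}$ of involutions lying over $\sigma$; the product $m\cdot n$ then becomes a fixed word in $m\sigma$, $n\sigma$ and the elements of $S_3$, and (\ref{eq:triality_Group}) is exactly the relation forcing that word to be symmetric in its two admissible factorizations.

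Once both expressions are available, $R_m\colon n\mapsto n\cdot m=m^{-\rho^2}n\,m^{-\rho}$ is visibly a bijection with inverse $R_{m^{-1}}$, and $1$ is a two-sided identity, so $\M(G)$ is a loop. The Moufang law is then one further computation of the same flavour: expanding $x\cdot\big(y\cdot(x\cdot z)\big)$ and $\big((x\cdot y)\cdot x\big)\cdot z$ by means of the two product formulas and reducing with the commuting and triality relations, the two sides collapse to the same word. I would verify this single Moufang identity explicitly; the remaining two then follow, since the three are equivalent in any loop.
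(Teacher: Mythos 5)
Your preliminary steps are correct and, in fact, coincide with the route the paper itself takes when it generalizes this statement to Hopf algebras in Section~\ref{sec:Hopf_algebras_with_triality}: your toolkit ($m^\sigma=m^{-1}$, the reflection table, $m\,m^\rho m^{\rho^2}=1$ and the pairwise commutation of $m,m^\rho,m^{\rho^2}$) is the group-theoretic content of Lemma~\ref{lem:commutation}(a), and your closure computation $m^{-\rho}n\,m^{-\rho^2}=Y^{-1}Y^\sigma$ is the first half of Lemma~\ref{lem:commutation}(b). The genuine gap is at the step you yourself identify as the heart of the matter, the equality $m^{-\rho}n\,m^{-\rho^2}=n^{-\rho^2}m\,n^{-\rho}$. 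Your reduction of it to $h\,h^\rho h^{\rho^2}=1$ for $h=n^{\rho^2}m^{-\rho}$ is fine, and you are right that (\ref{eq:triality_Group}) cannot be applied to $h$; but neither of your proposed ways forward is a proof. ``Invoking (\ref{eq:triality_Group}) at arguments built from $a$ and $b$ and assembling the results'' names no arguments and no assembly, and the semidirect-product reformulation ends with the assertion that (\ref{eq:triality_Group}) ``is exactly the relation forcing that word to be symmetric in its two admissible factorizations'' --- which is a restatement of the claim to be proved, not an argument. (That route can be developed, essentially as in Hall's work cited as \cite{Ha1}, but you do not develop it.) The Moufang identity at the end is likewise only asserted; in the paper's analog, Theorem~\ref{thm:product_MH}, it is a genuine multi-line computation.

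What you missed is that (\ref{eq:triality_Group}) should be applied not to the mixed element $h$ but to one of the two product expressions themselves, which your own closure computation places inside $\M(G)$. Run the closure computation symmetrically to get $n^{-\rho^2}m\,n^{-\rho}=Z^{-1}Z^\sigma\in\M(G)$ with $Z=a\,(b^{\rho^2})^{-1}(b^\rho)^\sigma$, and then replace $m$ by $m^{-1}\in\M(G)$, so that $w=n^{-\rho^2}m^{-1}n^{-\rho}\in\M(G)$ and $w\,w^\rho w^{\rho^2}=1$ is legitimate. Expanding, the adjacent $n$-factors collapse by your commutation relations ($n^{-\rho}n^{-1}=n^{\rho^2}$ and $n^{-\rho^2}n^{-\rho}=n$), giving
\begin{displaymath}
1 \;=\; n^{-\rho^2}m^{-1}n^{\rho^2}\,m^{-\rho}\,n\,m^{-\rho^2}\,n^{-1},
\qquad\text{i.e.}\qquad
m^{-1}n^{\rho^2}m^{-\rho}n\,m^{-\rho^2} \;=\; n^{\rho^2}n \;=\; n^{-\rho},
\end{displaymath}
which rearranges to exactly $m^{-\rho}n\,m^{-\rho^2}=n^{-\rho^2}m\,n^{-\rho}$. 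So, contrary to your pessimism, the identity does follow from the relations you already have; the ``missing cross-relation'' is obtained by applying triality to an element of $\M(G)$ built from both $m$ and $n$. This is precisely what the paper does in Hopf form in Lemma~\ref{lem:commutation}(b): the triality condition is used ``on this element'' $\sum\rho(S(v_{(1)}))\,u\,\rho^2(S(v_{(2)}))$, already shown to lie in $\MH(H)$, and the final substitution ``replacing $u$ by $S(u)$'' is the substitution $m\mapsto m^{-1}$ above.
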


Any Moufang loop $Q$ is recovered up to isomorphism as $\M(\Doro(Q))$ \cite{GrZa06}. By construction, Doro's group $\Doro(Q)$ satisfies the following universal property: given $G$ a group with triality such that $\M(G) \cong Q$ then there exists a homomorphism of groups with triality $\Doro(Q) \to G$ defined by $P_x \mapsto x, L_x \mapsto x^\rho$ and $R_x \mapsto x^{\rho^2}$ \cites{Do78,GrZa06}.

Mikheev \cite{Mi93} gave another construction of a group with triality $\W(Q)$ with a universal property dual to that of $\Doro(Q)$: if $G$ is a group with triality such that $\M(G) \cong Q$ and $Z_\Sthree(G) = \{ 1_G \}$ then there exists a monomorphism $G \to \W(Q)$ of groups with triality, where $Z_\Sthree(G)$ denotes the maximal normal subgroup of $G$ where $\Sthree = \Sthree_3$ acts trivially. Mikheev's paper has no proofs,  but they were provided by Grishkov and Zavarnitsine in \cite{GrZa06}. The construction of $\W(Q)$ begins with the definition of a \emph{pseudoautomorphism} of a Moufang loop $Q$; that is, a pair $(A,a)$ with $A \colon Q \to Q$ a bijective map and $a$ an element of $Q$, the \emph{right companion of }$A$, related by
\begin{displaymath}
    (xA) \cdot (yA \cdot a) = (x \cdot y)A \cdot a
\end{displaymath}
for all $x,y \in Q$. The pseudoautomorphisms of $Q$ form a group $\PsAut(Q)$ with product
\begin{displaymath}
    (A,a)(B,b) = (AB, aB \cdot b).
\end{displaymath}
The group $\W(Q)$ is then defined as $\W(Q) = \PsAut(Q) \times Q$ with the product
\begin{displaymath}
    [(A,a),x] [(B,b),y] = [(A,a)(B,b)(C,c),xB \cdot y]
\end{displaymath}
where
\begin{displaymath}
    (C,c) = (R_{b,xB},b^{-1}(xB)^{-1}b(xB))(R_{xB,y},(xB)^{-1}y^{-1}(xB)y)
\end{displaymath}
and $R_{x,y} = R_xR_yR^{-1}_{xy}$. The actions of $\rho$ and $\sigma$ are given by
\begin{eqnarray*}
    [(A,a),x] & \stackrel{\rho}{\mapsto} & [(A,a),a][(T_x,x^{-3}),x^{-2}] \text{ and}\cr
    [(A,a),x] & \stackrel{\sigma}{\mapsto} & [(A,a)(T_x,x^{-3}),x^{-1}]
\end{eqnarray*}
with $T_x = L^{-1}_x R_x$.

As remarked in \cite{GrZa06} a direct verification of the associativity of this product is ``technically intractable''. However, this technicality on the definition of $\W(Q)$ is fictitious since this group is just the group of \emph{autotopies} of $Q$, i.e. triples $(A,B,C)$ of bijective maps from $Q$ to $Q$ such that
\begin{displaymath}
    (xy)A = (xB)(yC)
\end{displaymath}
for all $x,y \in Q$, with componentwise product. This group has been known for many years, but its universal property seems to have gone unnoticed until interpreted as $\W(Q)$. This description allows simple proofs of the properties of $\W(Q)$. Section \ref{sec:groups_with_triality} is devoted to this issue. We will also extend these ideas to the context of Hopf algebras in Section \ref{sec:other_constructions}.

The approach of Grishkov and Zavarnitsine to the construction of $\M(G)$ is well suited for its extension to cocommutative Hopf algebras. Recall Sweedler's sigma notation $\Delta(u) = \sum \uui \otimes \uuii$ for the comultiplication in Hopf algebras. The antipode will be usually denoted by $S$, and $\ep$ will stand for the counit. Although the antipode and the group generated by $\rho$ and $\sigma$ are represented by $S$
and $\Sthree$ respectively, this will not lead to confusion. The letter $F$ is reserved for the ground field.

\begin{definition}
Given two automorphisms $\rho, \sigma$ of a cocommutative Hopf algebra $H$ such that $\sigma^2 = \rho^3 = \Id_H$ and $\sigma \rho = \rho^2 \sigma$, $H$ is said to be a cocommutative Hopf algebra with triality relative to $\rho$ and $\sigma$ in case that
\begin{equation}
\label{eq:triality_Hopf}
    \sum P(u_{(1)})\rho(P(u_{(2)}))\rho^2(P(u_{(3)})) = \ep(u)1,
\end{equation}
where $P(u) = \sum \sigma(u_{(1)})S(u_{(2)})$.
\end{definition}

As we will show, the definition of Hopf algebra with triality does not depend on the generators $\rho, \sigma$ of the group $\Sthree=\langle \rho, \sigma\rangle$ generated by $\rho$ and $\sigma$, so we can talk about Hopf algebras with triality $\Sthree$, although usually we will explicitly mention some generators $\rho$ and $\sigma$. The group algebra $FG$ of a group $G$ with triality relative to $\rho$ and $\sigma$ is clearly a cocommutative Hopf algebra with triality relative to $\rho$ and $\sigma$ (we  abuse notation by identifying automorphisms of $G$ with their linear extensions to $FG$) so Hopf algebras with triality are very natural.
Since operators are written on the left in this definition,  $P(g)$ gives $\sigma(g)g^{-1}$ when applied to a group algebra rather than $P(\sigma(g^{-1})) = g^{-1}\sigma(g)$,  which corresponds to the expression $g^{-1}g^{\sigma}$ appearing  in (\ref{eq:triality_Group}).

The analog of Moufang loops in the context of Hopf algebras are Moufang-Hopf algebras:
\begin{definition}
Any cocommutative and coassociative unital bialgebra $(U,\Delta,\ep,\cdot,1)$ satisfying the \emph{left Moufang-Hopf identity}
\begin{equation}
\label{eq:Moufang-Hopf}
    \sum u_{(1)}(v(u_{(2)}w)) = \sum ((u_{(1)}v)u_{(2)})w
\end{equation}
will be called a \emph{Moufang-Hopf algebra} in case that there exists a map $S\colon U\to U$, the antipode, such that
\begin{eqnarray*}
    && \sum S(\uui)(\uuii v) = \ep(u)v = \sum \uui (S(\uuii)v) \text{ and }\\
    && \sum (v\uui)S(\uuii) = \ep(u)v = \sum (v S(\uui))\uuii.
\end{eqnarray*}
\end{definition}
Any Moufang-Hopf algebra also satisfies the \emph{middle} and \emph{right Moufang-Hopf identities}:
\begin{displaymath}
    \sum (\uui (vw)) \uuii = \sum (\uui v)(w \uuii) \text{ and } \sum ((v \uui)w)\uuii = \sum v(\uui (w\uuii)).
\end{displaymath}

The \emph{loop algebra} $FQ$ of a Moufang loop $Q$ is an example of Moufang-Hopf algebra (with $\Delta(a) = a \otimes a$, $\ep(a) = 1$,  and antipode that  is the linear extension of $S\colon a \mapsto a^{-1}$ for any $a\in Q$). Other sources of Moufang-Hopf algebras are the universal enveloping algebras of Malcev algebras. A \emph{Malcev} algebra over a field of characteristic $\neq 2$ is an algebra $(\m,[\,,\,])$ with a skew-symmetric product $[x,y]$ that satisfies the Malcev identity
\begin{displaymath}
   \Jac(x,y,[x,z]) = [\Jac(x,y,z),x]
\end{displaymath}
where $\Jac(x,y,z) = [[x,y],z] + [[y,z],x] + [[z,x],y]$. In the same way that any associative algebra becomes a Lie algebra with the commutator product, for any nonassociative algebra $A$ the \emph{generalized alternative nucleus}
\begin{displaymath}
    \Nalt(A) = \{ a \in A \mid (a,x,y) = - (x,a,y) = (x,y,a) \quad \forall_{x,y \in A} \}
\end{displaymath}
where $(x,y,z) = (xy)z - x(yz)$, is closed under the commutator $[x,y] = xy - yx$ and with this product becomes a Malcev algebra. Any Lie algebra $\g$ appears as a (Lie) subalgebra of its universal enveloping algebra $U(\g)$ when the associative product is replaced by the commutator. For Malcev algebras there is a counterpart of this result: any Malcev algebra $\m$ over a field of characteristic $\neq 2,3$ appears as a (Malcev) subalgebra of $\Nalt(U(\m))$ for some nonassociative algebra $U(\m)$,  namely its universal enveloping algebra \cite{PeSh04}. Whenever the Malcev algebra $\m$ is a Lie algebra, $U(\m)$ is isomorphic to the usual universal enveloping algebra of the Lie algebra $\m$. This result was the aim of \cite{PeSh04},  where it was noticed that $U(\m)$ also has a bialgebra structure. Later, in \cite{Pe07} it was proved that $U(\m)$ is a Moufang-Hopf algebra where $\m$ embeds as primitive elements, i.e. elements $a$ such that $\Delta(a) = a \otimes 1 + 1 \otimes a$
 , where $\Delta$ stands for the comultiplication. In any Moufang-Hopf algebra, the Moufang-Hopf identities easily imply that  primitive elements belong to the generalized alternative nucleus.

The constructions  of Glauberman, Doro, Grishkov and Zavarnitsine can be extended to cocommutative Hopf algebras with triality in the following terms:
\begin{otherstheorem}
Let $H$ be a cocommutative Hopf algebra with triality relative to $\rho$ and $\sigma$ and define $P(x) =\sum \sigma(\xxi) S(\xxii)$ for any $x \in H$. Then
\begin{displaymath}
\MH(H) = \{ P(x) \mid x \in H\}
\end{displaymath}
is a unital cocommutative Moufang-Hopf algebra with the coalgebra structure and antipode inherited from $H$, the same unit element,  and product defined by
\begin{displaymath}
    u*v = \sum \rho^2(S(\uui)) v \rho(S(\uuii)) = \sum \rho(S(v_{(1)})) u \rho^2(S(v_{(2)}))
\end{displaymath}
for any $u,v \in \MH(H)$.
\end{otherstheorem}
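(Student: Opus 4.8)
The plan is to transfer the Grishkov--Zavarnitsine argument for $\M(G)$ into Sweedler calculus, treating the triality identity (\ref{eq:triality_Hopf}) as the single global tool. First I would show that $P$ (given by $P(x)=\sum \sigma(x_{(1)})S(x_{(2)})$, the convolution of $\sigma$ with the antipode) is a homomorphism of coalgebras: using cocommutativity, the fact that $\sigma$ is a coalgebra map, and $\Delta(S(a))=\sum S(a_{(1)})\otimes S(a_{(2)})$, a short computation gives $\Delta(P(x))=\sum P(x_{(1)})\otimes P(x_{(2)})$ and $\ep(P(x))=\ep(x)$. Hence $\MH(H)=P(H)$ is a cocommutative subcoalgebra of $H$ containing $1=P(1)$ and inheriting $\Delta$ and $\ep$. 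In the same spirit I would record the two pointwise identities that do all the later work: the relation $\sigma(w)=S(w)$ for every $w\in\MH(H)$ (which follows from $\sigma^2=\Id$, $S^2=\Id$, $S\sigma=\sigma S$ and cocommutativity, with no use of triality), and the observation that, since $P$ is a coalgebra map, (\ref{eq:triality_Hopf}) becomes $\sum w_{(1)}\rho(w_{(2)})\rho^2(w_{(3)})=\ep(w)1$ for $w\in\MH(H)$. Applying $\rho$ and $\rho^2$ and using cocommutativity yields the cyclic variants, the analogues of $mm^\rho m^{\rho^2}=1$ and its rotations. I would also note $\sigma\circ P=P\circ\sigma$, so that $\sigma(\MH(H))=\MH(H)$ and therefore $S(\MH(H))=\MH(H)$; this is what lets $S$ serve as antipode for $*$.

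Next I would address the product. Defining $u*v$ by the first formula, I would prove simultaneously that $u*v\in\MH(H)$ and that the second formula produces the same element; both are the translation of the well-definedness of $m\cdot n=m^{-\rho}nm^{-\rho^2}=n^{-\rho^2}mn^{-\rho}$, and both rest on the triality identity together with $\sigma=S$ on $\MH(H)$. Concretely, I would first verify the necessary condition $\sigma(u*v)=S(u*v)$ by a direct computation using $\sigma\rho=\rho^2\sigma$ and $\sigma(w)=S(w)$, and then exhibit $u*v$ as $P(z)$ for an element $z$ built from $u,v$ and their $\rho,\sigma$-images, exactly as in the group case; the cyclic triality relations from the previous step are what convert the conjugation-type expression into something in the image of $P$. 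Once well-definedness holds, the unit laws are immediate: $1*u=u$ from the first formula and $u*1=u$ from the second. The bialgebra compatibility $\Delta(u*v)=\sum(u_{(1)}*v_{(1)})\otimes(u_{(2)}*v_{(2)})$ and $\ep(u*v)=\ep(u)\ep(v)$ need no triality at all; they follow formally because $\rho,\sigma,S$ are coalgebra maps, $\Delta$ is multiplicative, and cocommutativity permits permuting the four Sweedler legs of $u$ into the required order.

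The main obstacle is the Moufang--Hopf identity (\ref{eq:Moufang-Hopf}) for $*$, namely $\sum u_{(1)}*(v*(u_{(2)}*w))=\sum((u_{(1)}*v)*u_{(2)})*w$. I would expand both sides through the product formula into a single large expression in $H$ involving $\rho,\rho^2,S$ applied to Sweedler components of $u,v,w$, and then reduce one side to the other by repeated use of the cyclic triality identities and of $\sigma=S$ on $\MH(H)$. Managing the bookkeeping, perhaps by introducing the operators of left and right $*$-multiplication and checking operator relations modelled on (\ref{eq:Doro}), is where the genuine difficulty lies, mirroring the classical verification that $\M(G)$ is Moufang. Finally, for the antipode I would use $S=\sigma$ on $\MH(H)$ and establish the four identities $\sum S(u_{(1)})*(u_{(2)}*z)=\ep(u)z$ and their companions; these are the Hopf analogue of $m^{-1}=m^\sigma$ being the loop inverse, and I expect them to follow from the unit laws and the triality identities by computations of the same flavour as, but considerably shorter than, the Moufang verification.
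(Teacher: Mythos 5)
Your preliminaries are all correct and coincide with the paper's: $P$ is a coalgebra morphism, so $\MH(H)$ is a cocommutative subcoalgebra containing $1$; the identity $\sigma(w)=S(w)$ on $\MH(H)$ needs no triality; the triality condition (\ref{eq:triality_Hopf}) transfers to $\sum w_{(1)}\rho(w_{(2)})\rho^2(w_{(3)})=\ep(w)1$ for $w\in\MH(H)$; the unit laws, the coalgebra compatibility of $*$, and the antipode identities are short once the product is well defined. The genuine gap is exactly where you place ``the genuine difficulty'': the Moufang--Hopf identity is never proved, only a plan (``expand both sides and reduce by repeated use of the cyclic triality identities'') is stated, and that plan omits the structural step that makes the verification tractable. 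In the paper's proof of Theorem \ref{thm:product_MH} the computation is not an unstructured reduction: one expands the outer product in $\sum((\uui*v)*\uuii)*w$ by the \emph{first} formula applied to the Sweedler legs of $\uui*v$ (computable because $S$ is an antihomomorphism), and the reversed cyclic triality identity $\sum\rho(\uui)\uuii\rho^2(\uuiii)=\ep(u)1$ then collapses $\sum(\uui*v)*\uuii$ to the plain $H$-sandwich $\sum \uui v\uuii$ --- the Hopf analogue of Grishkov--Zavarnitsine's $(m\cdot n)\cdot m=mnm$. Re-expanding $\bigl(\sum\uui v\uuii\bigr)*w$ by the same formula and peeling off factors yields $\sum\uui*(v*(\uuii*w))$. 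Note also that the reversed identity used here is \emph{not} a cyclic rotation of $\sum w_{(1)}\rho(w_{(2)})\rho^2(w_{(3)})=\ep(w)1$; it requires applying (\ref{eq:triality_Hopf}) to $S(u)$, which is the content of Lemma \ref{lem:commutation} a). Your fallback suggestion of establishing operator relations modelled on (\ref{eq:Doro}) is not available either: in the paper those relations (Lemma \ref{lem:multiplication_algebra}) are \emph{consequences} of the Moufang--Hopf identities, so invoking them here would be circular.

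A smaller gap of the same nature sits in your well-definedness step. Exhibiting $u*v$ as $P(z)$ for a suitable $z$ (the paper uses $z=\rho(u)\rho(S(v))$) proves membership in $\MH(H)$, but it does not prove that the two formulas for $u*v$ agree. The paper's mechanism (Lemma \ref{lem:commutation} b)) is to apply the triality identity (\ref{eq:triality_Hopf}) \emph{to the element} $\sum\rho(S(v_{(1)}))u\rho^2(S(v_{(2)}))$ itself --- legitimate precisely because that element has just been shown to lie in $\MH(H)$ --- and then cancel, obtaining the equality after replacing $u$ by $S(u)$. You gesture at this (``the cyclic triality relations\dots convert the conjugation-type expression''), but the self-application trick and the commutation relation $\sum\rho^i(\uui)\rho^j(\uuii)=\sum\rho^j(\uui)\rho^i(\uuii)$ are the actual engine, and the proposal never assembles its (correct) ingredients into these two lemmas. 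As it stands, the submission is a faithful outline of the paper's strategy with the two central computations left undone.
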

We will devote Section \ref{sec:Hopf_algebras_with_triality} to proving this result. Doro's construction of $\Doro(Q)$ can also be extended to the context of Hopf algebras to obtain a converse of this theorem, namely that any cocommutative Moufang-Hopf algebra appears as $\MH(H)$ for a certain  Hopf algebra $H$ with triality.  In particular, there should be a natural way of constructing the universal enveloping algebra of a Malcev algebra from a Hopf algebra with triality. The development of this approach was the motivation for the present paper.

We need a final ingredient to put all the pieces together, namely, the notion of a Lie algebra with triality that appeared in the work of Mikheev \cite{Mi92} and was studied by Grishkov in \cite{Gr03}. Given a Lie algebra $\g$, two automorphisms $\rho, \sigma$ of  $\g$ such that $\sigma^2 = \rho^3 = \Id_\g$, $\sigma \rho = \rho^2\sigma$ and $\Sthree = \langle \rho,\sigma\rangle$,  the group generated by them, $\g$ is said to be a Lie algebra with triality $
\Sthree$ (or relative to $\rho$ and $\sigma$) in case that
\begin{equation}
\label{eq:triality_Lie}
a - \sigma(a) + \rho(a) -\rho\sigma(a) +\rho^2(a) - \rho^2\sigma(a) = 0
\end{equation}
for any $a\in \g$. The automorphisms $\rho$, $\sigma$ induce an action $\lambda \colon \Sthree_3 \to \Aut(\g)$ of the symmetric group on three letters $\Sthree_3$ on $\g$ by $(12)\mapsto \sigma$, $(123)\mapsto \rho$. Condition (\ref{eq:triality_Lie}) is equivalent to
\begin{equation}
\label{eq:alternate_sum}
\sum_{\tau\in \Sthree_3} \sig(\tau) \tau(a) = 0,
\end{equation}
where we write $\tau(a)$ instead of $\lambda(\tau)(a)$ for short. In particular (\ref{eq:triality_Lie}) does not depend on the choice of the generators $\rho, \sigma$ of $\Sthree$.

Section \ref{sec:Lie_algebras_with_triality} is devoted to proving  that the universal enveloping algebra of a Lie algebra with triality is a Hopf algebra with triality, so we can produce Moufang-Hopf algebras from Lie algebras with triality.

In Section \ref{sec:universal_enveloping_algebra} we will  present a different approach to the construction of the universal enveloping algebra $U(\m)$ of a Malcev algebra $\m$ in \cite{PeSh04}. We start with a Malcev algebra $\m$ and the Lie algebra $\Lie(\m)$ defined in \cite{PeSh04}. This Lie algebra happens to be a Lie algebra with triality, so
 its universal enveloping algebra $U(\Lie(\m))$ is a Hopf algebra with triality. We will prove that $U(\m)$ is isomorphic to $\MH(U(\Lie(\m)))$. Figure \ref{fig:diagram} shows some relations between the objects that we will be concerned with.

\begin{figure}
\label{fig:diagram}
 \begin{tikzpicture}
  \node(Hopf) at (0,0)  [draw, style = very thick, rounded corners] {Hopf algebras with triality};
  \node(Lie) at (0,3)  [draw] {Lie algebras with triality};
  \node(Malcev) at (7,3)  [draw] {Malcev algebras};
  \node(MoufangHopf) at (7,0)  [draw] {Moufang-Hopf algebras};
  \node(Group) at (0,-3)  [draw] {Groups with triality};
  \node(Moufang) at (7,-3)  [draw] {Moufang loops};

\draw[->,very thick] ($(Lie.south) + (-.4em,0)$) -- node[left]{ $\mathfrak{g} \mapsto U(\mathfrak{g})$} ($(Hopf.north) + (-.4em,0)$);
\draw[<-,dotted] ($(Lie.south) + (.4em,0)$) -- node[right]{primitive elements} ($(Hopf.north) + (.4em,0)$);
\draw[->,thick] ($(Malcev.south) + (.4em,0)$) -- node[right]{$\mathfrak{m} \mapsto U(\mathfrak{m})$}  ($(MoufangHopf.north) + (.4em,0)$);
\draw[<-,dotted] ($(Malcev.south) + (-.4em,0)$) -- node[left]{primitive elements} ($(MoufangHopf.north) + (-.4em,0)$);
\draw[->,thick] ($(Malcev.west) + (0,.4em)$) -- node[above]{$\Lie(\m) \mapsfrom \m$} ($(Lie.east) + (0,.4em)$) ;
\draw[<-,dotted] ($(Malcev.west) + (0,-.4em)$) -- ($(Lie.east) + (0,-.4em)$) ;
\draw[->,very thick] ($(MoufangHopf.west) + (0,-.4em)$) -- node[below]{$\Doro(U) \mapsfrom U$} ($(Hopf.east) + (0,-.4em)$) ;
\draw[<-,very thick] ($(MoufangHopf.west) + (0,.4em)$) -- node[above]{$H \mapsto \MH(H)$} ($(Hopf.east) + (0,.4em)$) ;
\draw[->,thick] ($(Moufang.west) + (0,-.4em)$) -- node[below]{$\Doro(Q) \mapsfrom Q$} ($(Group.east) + (0,-.4em)$) ;
\draw[<-,dotted] ($(Moufang.west) + (0,.4em)$) -- node[above]{$G \mapsto \M(G)$}($(Group.east) + (0,.4em)$) ;
\draw[<-,thick] ($(Hopf.south) + (-.4em,0)$) -- node[left]{ group algebra} ($(Group.north) + (-.4em,0)$);
\draw[->,dotted] ($(Hopf.south) + (.4em,0)$) -- node[right]{group-like elements} ($(Group.north) + (.4em,0)$);
\draw[<-,thick] ($(MoufangHopf.south) + (.4em,0)$) -- node[right]{loop algebra} ($(Moufang.north) + (.4em,0)$);
\draw[->,dotted] ($(MoufangHopf.south) + (-.4em,0)$) -- node[left]{group-like elements} ($(Moufang.north) + (-.4em,0)$);
\end{tikzpicture}
\caption{Some relations between Moufang/Malcev objects and associative/Lie objects with triality}
\end{figure}
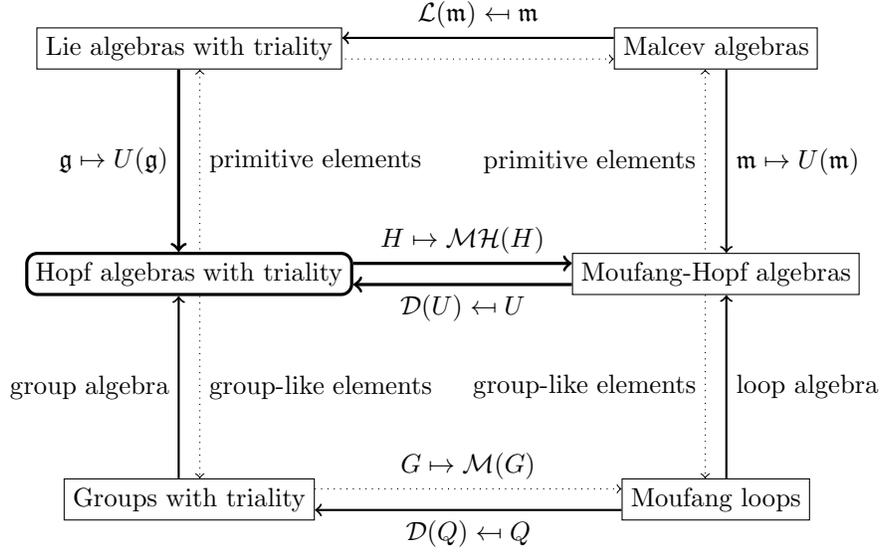


\section{Groups with triality and Moufang loops\footnotemark[2]}

\label{sec:groups_with_triality}

\footnotetext[2]{After the first version of this paper was submitted to the editor, Jonathan Hall called our attention to his recent work \cite{Ha10} devoted to proving Theorem \ref{thm:Hall} below for Bol loops, and he provided us with a copy of his preprint \cite{Ha1}. This has led us to remove our proof of Theorem \ref{thm:Hall}  from the present version of our paper. Theorem \ref{thm:GrZa} has been independently established in \cite{Ha1}*{Section 10.3}. However, our approach to this result is the motivation for its generalization to Moufang-Hopf algebras in Section \ref{sec:other_constructions}, and so is included here.  We take this opportunity to express our gratitude to Professor Hall for making his work available to us. }

Given a Moufang loop $Q$, an \emph{autotopy} of $Q$ is a triple $(A_1,A_2,A_3)$ of bijective transformations in $\Bij(Q)$ such that
\begin{displaymath}
    (xy)A_1 = (xA_2)(yA_3)
\end{displaymath}
for any $x,y \in Q$. The set $\Atp(Q)$ of all autotopies of $Q$ is a group with the componentwise composition. The Moufang identities imply that
\begin{displaymath}
    (L_x,U_x,L^{-1}_x), (R_x, R^{-1}_x, U_x) \text{ and } (U_x, L_x, R_x)
\end{displaymath}
are autotopies of $Q$, where $U_x = L_x R_x$. There is an action of the symmetric group on three letters as automorphisms of $\Atp(Q)$ given by
\begin{equation}
\label{eq:rho_sigma_autotopy}
   (A_1, A_2, A_3)^\rho = (JA_2J, A_3, JA_1J)  \text{ and } (A_1, A_2, A_3)^\sigma = (A_3, JA_2J,A_1)
\end{equation}
where $J\colon x \mapsto x^{-1}$ for any $x \in Q$ \cite{Sm99}*{Proposition 4.1.1}. In addition to the identities that define a Moufang loop,  it is convenient to recall that  any element $x$ in the loop has an  inverse $x^{-1}$ which satisfies $L^{-1}_x = L_{x^{-1}}$ and  $R^{-1}_x = R_{x^{-1}}$.

\begin{lemma}
\label{lem:middle}
If $(A_1,A_2,A_3) \in \Atp(Q)$ with $1 A_2 = 1$, then $A_1 = A_3$ and $JA_2J = A_2$.
\end{lemma}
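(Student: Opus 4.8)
The plan is to exploit the defining autotopy identity $(xy)A_1 = (xA_2)(yA_3)$ at the special arguments $x=1$ and $y=1$, and then to feed the resulting information back into the identity. Setting $x=1$ and using the hypothesis $1A_2 = 1$ gives $yA_1 = 1\cdot(yA_3) = yA_3$ for every $y$, so $A_1 = A_3$ falls out immediately. Setting $y=1$ then yields $xA_1 = (xA_2)(1A_3) = (xA_2)\cdot c$, where $c := 1A_1 = 1A_3$; thus the common map $A_1 = A_3$ is completely determined by $A_2$ and the single element $c$. This settles the first assertion and reduces the second to a statement about $A_2$ alone.

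Next I would substitute the relation $xA_1 = (xA_2)\cdot c$ back into the autotopy identity to obtain
$((xy)A_2)\cdot c = (xA_2)\bigl((yA_2)\cdot c\bigr)$
for all $x,y$; in other words $(A_2,c)$ behaves exactly like a pseudoautomorphism of $Q$. Under the right-operator convention, the desired conclusion $JA_2J = A_2$ unwinds to the single inverse-preserving equation $x^{-1}A_2 = (xA_2)^{-1}$ for all $x$, so it suffices to extract this from the displayed identity.

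To do so I would specialize $y = x^{-1}$. Since $x x^{-1} = 1$ and $1A_2 = 1$, the left-hand side collapses and we are left with $c = (xA_2)\bigl((x^{-1}A_2)\cdot c\bigr)$. Here the Moufang hypothesis enters decisively through the inverse property $L_x^{-1} = L_{x^{-1}}$ recalled just before the statement: multiplying on the left by $(xA_2)^{-1}$ and using $a^{-1}(ab)=b$ gives $(xA_2)^{-1}\cdot c = (x^{-1}A_2)\cdot c$, and cancelling $c$ by bijectivity of right multiplication yields $(xA_2)^{-1} = x^{-1}A_2$, which is precisely $JA_2J = A_2$.

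The point to watch — more a matter of discipline than a genuine obstacle — is the nonassociative bookkeeping: one must resist regrouping loop products freely and instead invoke only the inverse property and loop cancellation, and one must keep the right-to-left operator convention straight when translating $JA_2J = A_2$ into the pointwise identity $x^{-1}A_2 = (xA_2)^{-1}$. Once the autotopy relation has been reduced to the pseudoautomorphism identity, the inverse property does all of the remaining work.
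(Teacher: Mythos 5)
Your proof is correct and follows essentially the same route as the paper's: set $x=1$ to get $A_1=A_3$, set $y=1$ to express $A_1 = A_2 R_c$ with $c = 1A_1$, then evaluate at $y=x^{-1}$ and use the left inverse property $a^{-1}(ab)=b$ together with bijectivity of right multiplication by $c$ to conclude $(xA_2)^{-1} = x^{-1}A_2$. The intermediate observation that $(A_2,c)$ is a pseudoautomorphism is a pleasant conceptual gloss, but the underlying computation is identical to the paper's.
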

\begin{proof}
The condition on $A_2$ implies that $(yA_1) = (1y)A_1 = (1A_2)(yA_3) = yA_3$ so $A_1 = A_3$. With $a = 1 A_3$ we obtain that $x A_3 = x A_1 = (x1)A_1 = xA_2 a$, so $a = 1A_1 = (xx^{-1})A_1 = (xA_2)(x^{-1}A_3) = (xA_2)(x^{-1}A_2 a)$. Multiplying by $(xA_2)^{-1}$ we get that $(xA_2)^{-1}a = x^{-1}A_2 a$ which gives the result.
\end{proof}

Compare the following theorem with \cite{GrZa06}*{Corollary 1}.

\begin{theorem}
\label{thm:GrZa}
Let $Q$ be a Moufang loop. Then $\Atp(Q)$ is a group with triality (relative to $\rho$ and $\sigma$ given by (\ref{eq:rho_sigma_autotopy})) such that $\M(\Atp(Q)) \cong Q$ and $Z_\Sthree(\Atp(Q)) = \{ 1_{\Atp(Q)} \}$. Moreover, $\Atp(Q)$ is a universal injective object in the following sense: if $G$ is any group with triality such that $\M(G) \cong Q$ and $Z_\Sthree(G) = \{ 1_G \}$, then there exists a monomorphism of groups with triality $G \to \Atp(Q)$.
\end{theorem}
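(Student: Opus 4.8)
The plan is to establish the four assertions in turn: that $\Atp(Q)$ is a group with triality, that $\M(\Atp(Q))\cong Q$, that $Z_{\Sthree}(\Atp(Q))$ is trivial, and finally the universal injectivity. First I would record that $\rho,\sigma$ of (\ref{eq:rho_sigma_autotopy}) are automorphisms of $\Atp(Q)$ (citing Smith) and verify $\sigma^2=\rho^3=\Id$ and $\sigma\rho=\rho^2\sigma$ by substituting the formulas and using $J^2=\Id$. For the triality identity (\ref{eq:triality_Group}) I would set $m=g^{-1}g^\sigma=(M_1,M_2,M_3)$; since $m^\sigma=m^{-1}$ one gets $M_3=M_1^{-1}$ and $JM_2J=M_2^{-1}$, and the autotopy relation for $m$ gives $M_1=M_2R_b$ with $b=1M_1^{-1}$. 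Writing $X=m\,m^\rho m^{\rho^2}$, the identity $X^\rho=m^{-1}Xm$ (from $\rho^3=\Id$) compared componentwise shows $X=\Id$ as soon as its first component $X_1=M_1(JM_2J)(JM_3J)$ is trivial; so it suffices to prove the single relation $M_1R_bM_1^{-1}=JM_1J$, a finite verification powered by the Moufang identities.

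For $\M(\Atp(Q))\cong Q$ I would use the section $\kappa\colon x\mapsto(U_x,L_x,R_x)$ and note that $g^{-1}g^\sigma$ depends only on the coset of $g$ modulo left multiplication by $\sigma$-fixed autotopies (if $g=fh$ with $f^\sigma=f$ then $g^{-1}g^\sigma=h^{-1}h^\sigma$); hence $\M(\Atp(Q))$ is the image of $\kappa$ under $g\mapsto g^{-1}g^\sigma$ once one knows every autotopy is a $\sigma$-fixed one times some $\kappa(x)$ --- exactly the decomposition underlying $\Atp(Q)\cong\PsAut(Q)\ltimes Q$. The evaluation $(A_1,A_2,A_3)\mapsto 1A_1$ then inverts this parametrization up to the loop inversion $J$, and matching the products $x\cdot y$ and $m^{-\rho}nm^{-\rho^2}$ is a direct Moufang computation. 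For triviality of $Z_{\Sthree}(\Atp(Q))$, I would observe that an $\Sthree$-fixed autotopy must be diagonal, $(A,A,A)$ with $A\in\Aut(Q)$; any normal $\Sthree$-invariant subgroup on which $\Sthree$ acts trivially therefore lies in this diagonal copy of $\Aut(Q)$, and conjugating $(A,A,A)$ by the standard autotopies forces $A$ to centralise enough translation operators to conclude $A=\Id$.

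The heart of the theorem is the universal property, and here the plan is explicit. Identifying $Q$ with $\M(G)$, for $h\in G$ define the bijection $\theta_h\colon m\mapsto h^{-1}mh^\sigma$ of $\M(G)$; since $m=k^{-1}k^\sigma$ gives $\theta_h(m)=(kh)^{-1}(kh)^\sigma$, each $\theta_h$ preserves $\M(G)$, and $h\mapsto\theta_h$ is a homomorphism into $\Bij(\M(G))$ (operators acting on the right). I would then set
\[
\Phi(g)=(\theta_{g^{\sigma\rho}},\ \theta_g,\ \theta_{g^\rho}).
\]
Checking that this triple is an autotopy of $\M(G)$ is the decisive computation: expanding $\theta_{g^{\sigma\rho}}(m\cdot n)$ and $\theta_g(m)\cdot\theta_{g^\rho}(n)$ with the loop product $m^{-\rho}nm^{-\rho^2}$ and cancelling, the matching conditions collapse to the relations $\sigma\rho\sigma=\rho^2$ and $\sigma\rho^2\sigma=\rho$ in $\Sthree$, which is precisely what forces the exponents $\sigma\rho,\ \Id,\ \rho$. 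That $\Phi$ is a morphism of groups with triality follows from the key relation $J\theta_hJ=\theta_{h^\sigma}$ --- a consequence of $m^{-1}=m^\sigma$ on $\M(G)$ --- together with $\rho\sigma\rho=\sigma$. Finally, for $g\in\ker\Phi$ the component $\theta_g=\Id$ evaluated at $m=1$ gives $g^\sigma=g$, and $\theta_{g^\rho}=\Id$ at $m=1$ gives $g^{\rho\sigma}=g^\rho$; combining with $g^{\rho\sigma}=g^{\sigma\rho^2}=g^{\rho^2}$ yields $g^\rho=g^{\rho^2}$, hence $g^\rho=g$. Thus $\ker\Phi$ is a normal $\Sthree$-invariant subgroup on which $\Sthree$ acts trivially, so $\ker\Phi\subseteq Z_{\Sthree}(G)=\{1\}$ and $\Phi$ is the desired monomorphism (transported along $\M(G)\cong Q$).

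I expect the main obstacle to be twofold. The surjectivity in $\M(\Atp(Q))\cong Q$ --- equivalently, that every autotopy decomposes as a $\sigma$-fixed autotopy times a standard $\kappa(x)$ --- rests on the structure theory of autotopies and is the least formulaic step. And within the universal property, discovering the correct triple $\Phi(g)$ is the genuine insight: the autotopy equation admits a solution in twisted conjugations only for the exponents dictated by the relations of $\Sthree$, and a naive assignment (for instance all three components equal to $\theta_g$) fails unless $g$ is $\rho$-invariant. Once $\Phi$ is correctly identified, every remaining verification is short.
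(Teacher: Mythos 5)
Your parts (2)--(4) are sound and essentially coincide with the paper's own proof. Your decomposition ``$\sigma$-fixed factor times $\kappa(x)$'' is justified by Lemma \ref{lem:middle} exactly as the paper's decomposition $(A,B,A)(R_x^{-1},R_x,U_x^{-1})$ is (the middle component of $(A_1,A_2,A_3)\kappa(x)^{-1}$ with $x=1A_2$ fixes $1$), and it leads to the same description $\M(\Atp(Q))=\{(L_a^{-1},U_a^{-1},L_a)\mid a\in Q\}$; the center argument is the same; and your map $\Phi(g)=(\theta_{g^{\sigma\rho}},\theta_g,\theta_{g^\rho})$ \emph{is} the paper's monomorphism, since $\theta_{g^{\sigma\rho}}\colon m\mapsto g^{-\rho^2\sigma}mg^{\rho^2}$, $\theta_g\colon m\mapsto g^{-1}mg^{\sigma}$, $\theta_{g^{\rho}}\colon m\mapsto g^{-\rho}mg^{\rho\sigma}$ are precisely the components $A_1(x),A_2(x),A_3(x)$ there; your kernel computation also matches.

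The gap is in part (1), the triality identity itself. Your reduction via $X^{\rho}=m^{-1}Xm$ to the single relation $M_1R_bM_1^{-1}=JM_1J$ is correct (and slicker than the paper's verification of three operator identities), but that relation is \emph{not} ``a finite verification powered by the Moufang identities'' from the data you have at that point, namely: $(M_1,M_2,M_3)$ is an autotopy, $M_3=M_1^{-1}$, $JM_2J=M_2^{-1}$, $M_1=M_2R_b$. Indeed, take any Moufang loop with a nontrivial involutive automorphism $\phi$ (for instance $Q=\mathbb{Z}_3$ and $\phi=J$) and put $m=(\phi,\phi,\phi)$: all the listed properties hold with $b=1$, so $R_b=\Id_Q$, yet $M_1R_bM_1^{-1}=\Id_Q$ while $JM_1J=\phi\neq\Id_Q$. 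Such an $m$ is simply not of the form $g^{-1}g^{\sigma}$, and the relations you extracted from $m^{\sigma}=m^{-1}$ cannot detect this. What is missing is the concrete identification of $g^{-1}g^{\sigma}$: its first component is $L_a^{-1}$ with $a=1A_2$, which comes from the relation $A_1=A_3L_{1A_2}$ applied to $g$ itself (not to $m$) --- equivalently, the description of $\M(\Atp(Q))$ that you postpone to part (2). Once that is established first, your relation becomes $L_{a^{-1}}R_aL_a=R_a$, which does follow from flexibility and the left inverse property, $a((a^{-1}x)a)=(a(a^{-1}x))a=xa$, and your reduction then finishes triality in one line. So the argument is repaired by reordering: prove the explicit form of the elements $g^{-1}g^{\sigma}$ \emph{before} attacking the triality identity, as the paper does.
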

\begin{proof}
The condition of being a group with triality for $\Atp(Q)$ is equivalent to the following three equalities:
\begin{eqnarray*}
    A_1^{-1}A_3JA^{-1}_2JA_2JA^{-1}_3A_1 J &=& \Id_Q, \\
    A^{-1}_2JA_2JA^{-1}_3A_1JA^{-1}_1A_3J &=& \Id_Q \text{ and }\\
    A^{-1}_3A_1JA^{-1}_1A_3JA^{-1}_2JA_2J &=& \Id_Q.
\end{eqnarray*}
To check these equalities, we first observe that by choosing $x = 1A_2$, the middle component of $(A_1,A_2,A_3)(R_x,R^{-1}_x,U_x) \in \Atp(Q)$ fixes $1$ so, according to Lemma~\ref{lem:middle},  $(A_1,A_2,A_3)(R_x,R^{-1}_x,U_x) = (A,B,A)$ for some $A,B$ with $JB = BJ$. Since
\begin{displaymath}
    (A_1,A_2,A_3) = (A, B, A) (R^{-1}_x,R_x,U^{-1}_x)
\end{displaymath}
and $JL_xJ = R^{-1}_x$ then
\begin{eqnarray*}
    A_1^{-1}A_3JA^{-1}_2JA_2JA^{-1}_3A_1 J &=& R_xA^{-1}AU^{-1}_xJR^{-1}_xB^{-1}JBR_xJU_xA^{-1}AR^{-1}_xJ \\
    &=& L^{-1}_x JR^{-1}_xJ R_xJL_xJ = \Id_Q.
\end{eqnarray*}
The other identities are proved in a similar way.

The set $\M(\Atp(Q))$ consists of autotopies of the form
\begin{equation}
\label{eq:aux1}
    (A^{-1}_1A_3, A^{-1}_2JA_2J,A^{-1}_3A_1).
\end{equation}
Since in any autotopy $(A_1,A_2,A_3)$, $(xy)A_1 = (xA_2)(yA_3)$ implies that $A_1 = A_3L_a$ with $a = 1A_2$, then $A^{-1}_1A_3 = L^{-1}_a$ and the autotopy (\ref{eq:aux1}) can be written as $(L^{-1}_a,A^{-1}_2JA_2J,L_a)$. Again, in any autotopy $(A_1,A_2,A_3)$ the first and second components are related by $A_1 = A_2R_{1A_3}$, so $(L^{-1}_a,A^{-1}_2JA_2J,L_a) = (L^{-1}_a,U^{-1}_a,L_a)$. Since any element $a$ in $Q$ is of the form $1A_2$ for a certain autotopy $(A_1,A_2,A_3)$, then
\begin{displaymath}
    \M(\Atp(Q)) = \{ (L^{-1}_a,U^{-1}_a,L_a) \mid a \in Q \}.
\end{displaymath}
The product of $(L^{-1}_a, U^{-1}_a, L_a)$ and $(L^{-1}_b, U^{-1}_b, L_b)$ in $\M(\Atp(Q))$ is
\begin{eqnarray*}
    (L^{-1}_a, U^{-1}_a, L_a)\cdot (L^{-1}_b, U^{-1}_b, L_b) &=& (U^{-1}_a, L^{-1}_a, R^{-1}_a) (L^{-1}_b, U^{-1}_b, L_b) (R_a, R^{-1}_a, U_a) \\
    &=& (L^{-1}_{ab}, U^{-1}_{ab}, L_{ab})
\end{eqnarray*}
where the last equality follows from the Moufang identities. This proves that $(L^{-1}_b, U^{-1}_b, L_b) \mapsto b$ gives an isomorphism $\M(\Atp(Q)) \cong Q$.

In any group $G$ with triality,  $z \in Z_\Sthree(G)$ if and only if $z^\tau = z$ and $g^{-1}g^\tau z = z g^{-1}g^\tau$ for any $g \in G$ and $\tau \in \Sthree$, so any $(A_1, A_2, A_3) \in Z_\Sthree(\Atp(Q))$ i) satisfies $A_1 = A_2 = A_3$ and  ii) commutes with $(L^{-1}_a, U^{-1}_a,L_a)$ for all $a \in Q$. Condition i) implies that $A_1$ is an automorphism of $Q$, and condition ii) then says that $A_1 = \Id_Q$. Thus, $Z_\Sthree(\Atp(Q)) = \{ 1_{\Atp(Q)} \}$.

Let $G$ be a group with triality relative to $\rho$ and $\sigma$ with $\M(G) \cong Q$ (there will be no confusion in using the same letters to denote the automorphisms of $G$ and $\Atp(Q)$). Since
\begin{displaymath}
    x^{-1}(g^{-1}g^\sigma)x^\sigma = (gx)^{-1} (gx)^\sigma \in \M(G)
\end{displaymath}
for any $x, g \in G$,  we can define maps $A_1, A_2, A_3 \colon G \to \Bij(\M(G))$ by
\begin{eqnarray*}
    A_1 \colon x &\mapsto& A_1(x) \colon m \mapsto x^{-\rho^2\sigma} m x^{\rho^2},\\
    A_2 \colon x &\mapsto & A_2(x) \colon m \mapsto x^{-1} m x^\sigma \text{ and }\\
    A_3 \colon x &\mapsto & A_3(x) \colon m \mapsto x^{-\rho}m x^{\rho \sigma}.
\end{eqnarray*}
Clearly
\begin{eqnarray*}
    (m \cdot n)^{A_1(x)} &=& x^{-\rho^2 \sigma} m ^{-\rho} n m^{-\rho^2} x^{\rho^2}\\
    &=& x^{-\sigma \rho} m^{-\rho} x^\rho x^{-\rho} n x^{\rho \sigma} x^{-\sigma \rho^2} m^{-\rho^2} x^{\rho^2}\\
    &=& (x^{-1} m x^\sigma) \cdot (x^{-\rho} n x^{\rho \sigma}) \\
    &=& m^{A_2(x)} \cdot n^{A_3(x)}
\end{eqnarray*}
and $m^{A_1(xy)} = (xy)^{-\rho^2\sigma} m (xy)^{\rho^2} = (m^{A_1(xy)})^{A_1(y)}$ so $A_1(xy) = A_1(x) A_1(y)$. In the same way $A_2(xy) = A_2(x)A_2(y)$ and $A_3(xy) = A_3(x)A_3(y)$. Therefore, we get a homomorphism of groups
\begin{eqnarray*}
 G & \to & \Atp(\M(G)) \\
 x & \mapsto& (A_1(x), A_2(x), A_3(x)).
\end{eqnarray*}
One can easily check that this homomorphism  is a homomorphism of groups with triality. The kernel of this homomorphism consists of the elements $x \in G$ such that $x^{-1} m x^\sigma = m = x^{-\rho} m x^{\rho \sigma}$ for any $m \in \M(G)$. This is equivalent to $x^\sigma = x = x^\rho$ and $xm = mx$ for all $m \in \M(G)$, so the kernel of the previous homomorphism is $Z_\Sthree(G)$.
\end{proof}

To conclude this section, we would like to describe an explicit isomorphism between $\W(Q)$ and $\Atp(Q)$.  First we interpret pseudoautomorphisms in terms of autotopies.

\begin{lemma}
\label{lem:pseudoautomorphisms}
Let $Q$ be an arbitrary loop. Then the map
\begin{eqnarray*}
    \{(A_1,A_2,A_3) \in \Atp(Q) \mid 1A_2 = 1\} & \mapsto & \PsAut(Q)\\
    (A_1,A_2,A_3) & \mapsto & (A_2, 1A_1)
\end{eqnarray*}
is a group isomorphism.
\end{lemma}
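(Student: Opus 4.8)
The plan is to verify that the stated map $(A_1,A_2,A_3)\mapsto (A_2,1A_1)$ lands in $\PsAut(Q)$, that it is a group homomorphism, and that it is bijective by exhibiting an explicit two-sided inverse. First I would check that the image is a genuine pseudoautomorphism. Given an autotopy $(A_1,A_2,A_3)$ with $1A_2=1$, Lemma~\ref{lem:middle} gives $A_1=A_3$ (and $JA_2J=A_2$). Writing $a=1A_1=1A_3$, the autotopy relation $(xy)A_1=(xA_2)(yA_3)$ with $A_3=A_1$ becomes $(xy)A_1=(xA_2)(yA_1)$. Since in any autotopy the first and third components differ by a right translation, $A_1=A_3L_{1A_2}=A_3$ is consistent, and the relation $A_3=A_2R_{1A_1}$ (from the first--second component link used in the proof of Theorem~\ref{thm:GrZa}) gives $yA_1=yA_3=(yA_2)\cdot a$. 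Substituting this into $(xy)A_1=(xA_2)(yA_1)$ yields $(xy)A_2\cdot a=(xA_2)\cdot((yA_2)\cdot a)$, which is precisely the pseudoautomorphism identity for the pair $(A_2,a)=(A_2,1A_1)$.

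Next I would show the map is a homomorphism, matching the product $(A,a)(B,b)=(AB,aB\cdot b)$ on $\PsAut(Q)$ with componentwise composition of autotopies. If $(A_1,A_2,A_3)$ and $(B_1,B_2,B_3)$ are two autotopies with $1A_2=1=1B_2$, their composite is $(A_1B_1,A_2B_2,A_3B_3)$, whose middle component still fixes $1$ (as $1A_2B_2=1B_2=1$), so it lies in the subgroup. The map sends it to $(A_2B_2,\,1A_1B_1)$. On the other hand the product of the images is $(A_2,1A_1)(B_2,1B_1)=(A_2B_2,(1A_1)B_2\cdot 1B_1)$, so I need $1A_1B_1=(1A_1)B_2\cdot 1B_1$. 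This follows from the autotopy relation for $(B_1,B_2,B_3)$ evaluated at a suitable pair together with $B_3=B_2R_{1B_1}$ and $B_1=B_3$; concretely $1A_1B_1=(1A_1)B_3=((1A_1)B_2)\cdot(1B_1)$, using $zB_3=(zB_2)\cdot(1B_1)$ which is exactly the relation derived above for the pair $(B_2,1B_1)$.

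Finally, for bijectivity I would construct the inverse directly: given $(A,a)\in\PsAut(Q)$, send it to the triple $(R_aA,\,A,\,A)$, or more carefully to $(A_1,A_2,A_3)$ with $A_2=A$, $A_3=A_1=AR_a$ (so that $1A_2=1A=1$ since $A$ fixes the identity for a normalized pseudoautomorphism), and check this triple is an autotopy whose image under our map is $(A,a)$. Verifying the autotopy condition $(xy)A_1=(xA_2)(yA_3)$ reduces exactly to the pseudoautomorphism identity, and one confirms the two composites with the forward map are identities on each side. The main obstacle I expect is bookkeeping the normalization: a pseudoautomorphism $(A,a)$ need not satisfy $1A=1$ a priori, so I would first record that the defining identity forces $1A=1$ (set $x=y=1$), which legitimizes placing $A$ as the middle component of an autotopy in the distinguished subgroup. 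Once that normalization is pinned down, the remaining verifications are routine applications of the autotopy relations and Lemma~\ref{lem:middle}, and the two constructed maps are visibly mutually inverse.
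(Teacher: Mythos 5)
Your proof is correct, and there is in fact nothing in the paper to compare it against: the authors state Lemma~\ref{lem:pseudoautomorphisms} without proof, so your argument supplies exactly the verification they leave to the reader, built from the same ingredients they develop nearby (the identity $A_1=A_3$, the component relation $A_1=A_2R_{1A_3}$ used in the proof of Theorem~\ref{thm:GrZa}, and the observation that the autotopy equation with $1A_2=1$ \emph{is} the pseudoautomorphism identity for $(A_2,1A_1)$). Your homomorphism computation and the explicit inverse $(A,a)\mapsto(AR_a,A,AR_a)$ are both right, as is the preliminary normalization $1A=1$ obtained by setting $x=y=1$ and cancelling the bijection $R_{(1A)\cdot a}$; that step is genuinely needed and easy to overlook.

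Two small points of hygiene. First, the lemma concerns an \emph{arbitrary} loop, while Lemma~\ref{lem:middle} is stated and proved in the paper's Moufang setting: its proof, and even the meaning of the conclusion $JA_2J=A_2$, rely on two-sided inverses, which an arbitrary loop need not have. You only use the first assertion $A_1=A_3$, which holds in any loop by the one-line computation $yA_1=(1y)A_1=(1A_2)(yA_3)=yA_3$; you should quote that computation directly rather than cite the Moufang-specific lemma. Second, the triple $(R_aA,A,A)$ you first floated is not an autotopy (wrong composition order, and the third component must also carry the $R_a$); only your ``more careful'' version $(AR_a,A,AR_a)$ should appear in a final write-up.
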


As noted in the proof of Theorem \ref{thm:GrZa}, any autotopy $(A_1, A_2, A_3)$ of a Moufang loop decomposes in a unique way in $\Atp(Q)$ as
\begin{displaymath}
(A_1, A_2, A_3) = (A',A,A') (R^{-1}_x,R_x,U^{-1}_x)
\end{displaymath}
with $x = 1 A_2$ and certain $A, A'$. Hence, we may identify $\Atp(Q)$ with $\PsAut(Q) \times Q$ through $(A_1, A_2, A_3) \mapsto [(A,a),x]$ with $a = 1A'$ and  $x = 1 A_2$.

\begin{theorem}
\label{thm:Hall}
Let $Q$ be a Moufang loop. The map
\begin{eqnarray*}
    \psi \colon \Atp(Q) &\to& \W(Q) \\
    (A_1, A_2, A_3) & \mapsto & [(A,a),x]
\end{eqnarray*}
with $x = 1A_2$, $A = A_2R^{-1}_x$ and $ a = 1A_1 x$ is an isomorphism of groups with triality.
\end{theorem}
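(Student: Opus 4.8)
The plan is to verify directly that $\psi$ is a well-defined group isomorphism intertwining the triality actions, breaking this into three manageable tasks: bijectivity, multiplicativity, and equivariance under $\rho$ and $\sigma$. For bijectivity, I would exploit the unique decomposition recalled just before the statement: any autotopy factors uniquely as $(A_1,A_2,A_3)=(A',A,A')(R^{-1}_x,R_x,U^{-1}_x)$ with $x=1A_2$, and Lemma~\ref{lem:pseudoautomorphisms} identifies the factor $(A',A,A')$ (an autotopy whose middle component fixes $1$) with the pseudoautomorphism $(A, 1A')=(A_2R^{-1}_x, a)$ where $a=1A_1 x$. Thus $\psi$ is essentially the composite of the decomposition map $\Atp(Q)\to \PsAut(Q)\times Q$ with the isomorphism of Lemma~\ref{lem:pseudoautomorphisms}, so bijectivity is immediate once one checks the bookkeeping that $A=A_2R^{-1}_x$ really is the middle component of the pseudoautomorphism factor and that $a=1A_1 x$ is its companion.

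The central task is multiplicativity: given two autotopies, I would compute the $x$-, $A$-, and $a$-data of their product in $\Atp(Q)$ and match it against the defining product of $\W(Q)$, namely $[(A,a),x][(B,b),y]=[(A,a)(B,b)(C,c),\,xB\cdot y]$ with the correction term $(C,c)$ built from the operators $R_{b,xB}$ and $R_{xB,y}$. Here the first component $A_1$ contributes the loop element via $x=1A_2$ and the companion data via $a=1A_1 x$, while the product in $\Atp(Q)$ is simply componentwise composition. The strategy is to push the product through the unique decomposition and read off its three pieces. \emph{The main obstacle I expect is precisely this step}: the correction factor $(C,c)$ in Mikheev's product encodes the failure of the first-coordinate datum $x\mapsto 1A_2$ to be a homomorphism, and matching it requires the associator/inner-mapping identities $R_{x,y}=R_xR_yR^{-1}_{xy}$ together with repeated use of the Moufang identities to re-associate the triple products $b^{-1}(xB)^{-1}b(xB)$ and $(xB)^{-1}y^{-1}(xB)y$. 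This is the ``technically intractable'' computation alluded to earlier, and the whole point of the autotopy description is to make it tractable, so I would organize it by first handling the $Q$-component $xB\cdot y$ and then the $\PsAut(Q)$-component separately.

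Finally, for equivariance I would check that $\psi$ commutes with $\rho$ and $\sigma$ by comparing the autotopy actions in (\ref{eq:rho_sigma_autotopy}) with the explicit formulas for the $\W(Q)$-actions given in the introduction, the ones involving $T_x=L^{-1}_xR_x$ and the shifts $[(T_x,x^{-3}),x^{-2}]$, $[(A,a)(T_x,x^{-3}),x^{-1}]$. Concretely, I would apply $\rho$ to $(A_1,A_2,A_3)$ to get $(JA_2J, A_3, JA_1J)$, extract its $\psi$-data, and verify this equals the prescribed $\rho$-image of $[(A,a),x]$; the appearance of $J$ in the $\rho$- and $\sigma$-formulas is what produces the inverse-element shifts $x^{-2}$, $x^{-3}$, and the operator $T_x$ emerges as the middle component after conjugating by $J$. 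Since the triality relations $\sigma^2=\rho^3=\Id$ and $\sigma\rho=\rho^2\sigma$ already hold on both sides, it suffices to verify the two generating relations, and the genuinely new content beyond Theorem~\ref{thm:GrZa} is just the careful identification of companions; so once multiplicativity is in hand, equivariance should follow by a direct, if bookkeeping-heavy, substitution.
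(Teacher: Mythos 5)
First, a point of comparison that matters for this review: the paper contains \emph{no} proof of Theorem~\ref{thm:Hall} to measure your proposal against --- the footnote at the start of Section~\ref{sec:groups_with_triality} explains that the authors removed their proof after learning of Hall's work, and the theorem is stated with a citation in place of an argument. So your proposal must stand on its own, and on its own it is an outline rather than a proof. The skeleton is sound: bijectivity really is immediate from the unique decomposition $(A_1,A_2,A_3)=(A',A,A')(R^{-1}_x,R_x,U^{-1}_x)$ together with Lemma~\ref{lem:pseudoautomorphisms}, and your bookkeeping ($A=A_2R^{-1}_x$, $a=1A'=1A_1R_x=1A_1x$) is correct. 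But the two items that carry all the mathematical content --- multiplicativity against Mikheev's product and equivariance for $\rho,\sigma$ --- are only announced (``I would compute\dots and match'', ``should follow by a direct, if bookkeeping-heavy, substitution''), never performed. Since these computations are precisely what replaces the ``technically intractable'' associativity check for $\W(Q)$, deferring them leaves a genuine gap: you have described the proof one would write, not written it.

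To make the missing content concrete: with $\psi(A_1,A_2,A_3)=[(A,a),x]$, $\psi(B_1,B_2,B_3)=[(B,b),y]$ and $z=xB\cdot y$, the middle component of the product autotopy is $A_2B_2=AR_xBR_y$, so matching map components requires $AR_xBR_yR^{-1}_z=ABR_{b,xB}R_{xB,y}$. Since $R_{xB,y}=R_{xB}R_yR^{-1}_z$, this is equivalent to $B^{-1}R_xB=R_{b,xB}R_{xB}=R_bR_{xB}R^{-1}_{b(xB)}R_{xB}$; and since the pseudoautomorphism property of $(B,b)$ gives $B^{-1}R_xB=R_{xB\cdot b}R^{-1}_b$, the whole thing reduces to the operator identity
\begin{displaymath}
R_{ub}=R_bR_uR^{-1}_{bu}R_uR_b, \qquad u=xB,
\end{displaymath}
which does hold in a Moufang loop (two applications of the right Moufang identity $R_cR_dR_c=R_{c(dc)}$ plus diassociativity), but nothing of this kind appears in your write-up; the companion coordinates $(aB\cdot b)C\cdot c$ versus $1(A_1B_1)\cdot z$ still need a separate matching. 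Likewise for equivariance: e.g.\ the $Q$-component of $\psi\bigl((A_1,A_2,A_3)^\rho\bigr)$ is $1A_3=aU^{-1}_x=x^{-1}(ax^{-1})$, which must be shown to equal the $Q$-component $aT_x\cdot x^{-2}=((x^{-1}a)x)x^{-2}$ of $[(A,a),a][(T_x,x^{-3}),x^{-2}]$ --- again true by diassociativity, again absent. Your strategy would succeed if executed, so nothing here would \emph{fail}; but as submitted, the heart of the argument is missing.
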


The reader may have noticed that  in this section most of the operators act  on the right of their arguments, which we have done to be consistent with \cite{GrZa06}. We will return to this notation in Section \ref{sec:other_constructions} where we will consider an analog of autotopies for Moufang-Hopf algebras. In the intervening  sections,  operators will act on the left of their arguments.


\section{Cocommutative Hopf algebras with triality}
\label{sec:Hopf_algebras_with_triality}
In this section we will prove that any cocommutative Hopf algebra $H$ with triality relative to $\rho$  and $\sigma$ induces a Moufang-Hopf algebra. The arguments are the natural extension of those in \cite{GrZa06} to Hopf algebras. We define
\begin{displaymath}
\MH(H)=\set{P(x)\mid x\in H}
\end{displaymath}
where $P(x) = \sum \sigma(\xxi) S(\xxii)$. Notice that $S(P(x)) = \sigma(P(x))= P(\sigma(x))$ and that $\Delta(\MH(H))\subseteq \MH(H) \otimes \MH(H)$.

\begin{lemma}
\label{lem:commutation}
For any $u,v\in \MH(H)$ the following hold
\begin{enumerate}[\rm a)]
    \item $\sum \rho^i(\uui) \rho^j(\uuii) = \sum \rho^j(\uui)\rho^i(\uuii)$ ($i,j\in\{0,1,2\}$) and
    \item $\sum \rho^2(S(u_{(1)}))v\rho(S(u_{(2)}))=\sum   \rho(S(v_{(1)}))u\rho^2(S(v_{(2)})) \in \MH(H)$.
\end{enumerate}
\end{lemma}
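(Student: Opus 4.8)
The plan is to recast condition \eqref{eq:triality_Hopf} as a statement in a convolution algebra and to extract from it a short list of ``collapse'' identities that do all the work. Since $\Delta(\MH(H))\subseteq \MH(H)\otimes\MH(H)$, the set $\MH(H)$ is a subcoalgebra, so $\Hom(\MH(H),H)$ is an associative algebra under the convolution product $(f\circledast g)(u)=\sum f(u_{(1)})g(u_{(2)})$ with unit $u\mapsto\ep(u)1$. Writing $\iota$ for the inclusion and abbreviating the restrictions of $\rho,\rho^2,S$ to $\MH(H)$ by the same symbols, the antipode axioms give $\iota\circledast S=\ep(\cdot)1=S\circledast\iota$, and since $\rho$ is an algebra map, $\rho$ and $\rho^2$ are convolution-invertible with inverses $\rho\circ S$ and $\rho^2\circ S$. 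Because $\Delta(P(u))=\sum P(u_{(1)})\otimes P(u_{(2)})$, the defining identity \eqref{eq:triality_Hopf} reads $\iota\circledast\rho\circledast\rho^2=\ep(\cdot)1$ on $\MH(H)$; applying the automorphisms $\rho$ and $\rho^2$ to this equation yields the rotated forms $\rho\circledast\rho^2\circledast\iota=\ep(\cdot)1$ and $\rho^2\circledast\iota\circledast\rho=\ep(\cdot)1$.

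For part (a) I would first cancel in these relations. Multiplying $\iota\circledast\rho\circledast\rho^2=\ep(\cdot)1$ on the right by $(\rho^2)^{-1}=\rho^2\circ S$ gives the collapse identity
\begin{displaymath}
\sum u_{(1)}\rho(u_{(2)})=\rho^2(S(u)),
\end{displaymath}
and multiplying $\rho\circledast\rho^2\circledast\iota=\ep(\cdot)1$ on the left by $\rho\circ S$ gives $\sum \rho^2(u_{(1)})u_{(2)}=\rho(S(u))$. A \emph{third} such identity is obtained by applying $\sigma$ to the first one and using $\sigma\rho=\rho^2\sigma$, $\sigma S=S\sigma$, $S^2=\Id$, and the crucial fact $\sigma(u_{(i)})=S(u_{(i)})$ (valid because $\MH(H)$ is $\sigma$-stable with $\sigma=S$ on it): this produces $\sum S(u_{(1)})\rho^2(S(u_{(2)}))=\rho(u)$, which after the substitution $u\mapsto S(u)$ becomes $\sum u_{(1)}\rho^2(u_{(2)})=\rho(S(u))$. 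The second and third identities have the same right-hand side, so $\sum u_{(1)}\rho^2(u_{(2)})=\sum \rho^2(u_{(1)})u_{(2)}$; applying $\rho$ turns this into $\sum \rho(u_{(1)})u_{(2)}=\sum u_{(1)}\rho(u_{(2)})$. Thus $\iota$ commutes in convolution with both $\rho$ and $\rho^2$, and the general case $\sum\rho^i(u_{(1)})\rho^j(u_{(2)})=\sum\rho^j(u_{(1)})\rho^i(u_{(2)})$ follows by applying $\rho^{\min(i,j)}$ to the commutation for the exponent $|i-j|\in\{1,2\}$.

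For part (b) the equality of the two expressions and their membership in $\MH(H)$ should both be proved with the same toolkit. Write $L(u,v)=\sum\rho^2(S(u_{(1)}))\,v\,\rho(S(u_{(2)}))$ and $R(u,v)=\sum\rho(S(v_{(1)}))\,u\,\rho^2(S(v_{(2)}))$. A sanity check already available is that $L(u,v)$ passes the necessary test $\sigma(w)=S(w)$ for membership: using $\sigma\rho^2=\rho\sigma$, $\sigma\rho=\rho^2\sigma$, $\sigma=S$ on $\MH(H)$, and $S^2=\Id$ one finds $\sigma(L(u,v))=\sum\rho(u_{(1)})S(v)\rho^2(u_{(2)})$, while the anti-automorphism property of $S$ gives $S(L(u,v))=\sum\rho(u_{(2)})S(v)\rho^2(u_{(1)})$, and these agree by cocommutativity. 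For the equality $L(u,v)=R(u,v)$ I would expand both sides against the coproducts of $u$ and $v$, insert the triality identity \eqref{eq:triality_Hopf} for the appropriate combined argument, and then telescope the result with the three collapse identities of part (a), using cocommutativity to move the $u$- and $v$-factors into matching positions.

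The main obstacle is the assertion that the common value lies in $\MH(H)=\{P(x)\mid x\in H\}$, i.e. producing $z\in H$ with $L(u,v)=\sum\sigma(z_{(1)})S(z_{(2)})$. Unlike the closure of a group under multiplication, there is no canonical preimage under $P$ (already on group-like elements $P$ is far from injective on $\MH(H)$), so the $\sigma=S$ test above is necessary but not sufficient. I expect the real content to be an explicit but unmotivated choice of $z$ built from $\rho$-images of the coproduct parts of $u$ with $v$ inserted, whose image under $P$ telescopes to $L(u,v)$ precisely because of the collapse identities and \eqref{eq:triality_Hopf}; this mirrors the closure step in the group-theoretic argument of Grishkov and Zavarnitsine, and arranging the cancellations so that the long expression $\sum\sigma(z_{(1)})S(z_{(2)})$ collapses to $L(u,v)$ is where the difficulty concentrates. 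If one instead exhibits preimages $z_L,z_R$ for $L$ and $R$ separately and shows $P(z_L)=P(z_R)$ via the same identities, both the membership and the well-definedness are settled at once.
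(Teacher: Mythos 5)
Your part a) is correct and is essentially the paper's own argument: both proofs extract the collapse identities $\sum u_{(1)}\rho(u_{(2)})=\rho^2(S(u))$ and $\sum \rho^2(u_{(1)})u_{(2)}=\rho(S(u))$ from the triality relation and its $\rho$-rotations, obtain one further identity of this type (the paper by applying $S$ to the triality relation evaluated at $S(u)$, you by applying $\sigma$ and using $\sigma=S$ on $\MH(H)$; both are legitimate), and reduce the general exponents $(i,j)$ to the case $|i-j|\in\{1,2\}$ by applying powers of $\rho$.

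Part b) contains a genuine gap, which you flag yourself. The missing idea is concrete and much simpler than you anticipate: the preimage under $P$ is a plain product of two elements, not an expression assembled from coproduct components of $u$ with $v$ inserted. Indeed, using $\sigma\rho=\rho^2\sigma$, $\sigma=S$ on $\MH(H)$, the anti-multiplicativity of $S$, $S\rho=\rho S$ and cocommutativity, one computes
\begin{displaymath}
P(\rho(u)\rho(v))=\sum \rho^2(S(u_{(1)}))\bigl(\rho^2(S(v_{(1)}))\rho(S(v_{(2)}))\bigr)\rho(S(u_{(2)}))
=\sum \rho^2(S(u_{(1)}))\,v\,\rho(S(u_{(2)})),
\end{displaymath}
the inner factor collapsing to $S(S(v))=v$ by your own part a) identities applied to $S(v)\in\MH(H)$; symmetrically $\sum\rho(S(v_{(1)}))u\rho^2(S(v_{(2)}))=P(\rho^2(v)\rho^2(u))$. (The paper's proof is exactly this computation, performed for the preimage $\rho(u)\rho(S(v))$.) So membership is a three-line verification, not the intractable telescoping you predict, but it is absent from your writeup, and, as you concede, the $\sigma$-invariance check cannot replace it. Moreover, this step must come \emph{first}, because the equality depends on it: the triality relation in the form $\sum w_{(1)}\rho(w_{(2)})\rho^2(w_{(3)})=\ep(w)1$ is only available for $w\in\MH(H)$, and the paper applies it precisely to $w=\sum\rho(S(v_{(1)}))u\rho^2(S(v_{(2)}))$, which is legitimate only once this element is known to lie in $\MH(H)$. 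After simplifying with part a), that application exhibits $\sum\rho^2(u_{(1)})v\rho(u_{(2)})$ and $\sum\rho(S(v_{(1)}))S(u)\rho^2(S(v_{(2)}))$ as right and left convolution inverses of one and the same map $\MH(H)\otimes\MH(H)\to H$, hence equal, and the substitution $u\mapsto S(u)$ gives b). Your plan (``insert the triality identity (\ref{eq:triality_Hopf}) for the appropriate combined argument, then telescope'') never specifies that argument and inverts this logical order; without the explicit preimage formula there is no element of $\MH(H)$ to which the triality relation can be applied, so as written neither the membership nor the equality in b) is proved.
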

\begin{proof}
On the one hand, condition (\ref{eq:triality_Hopf}) implies that $\sum \uui \rho(\uuii)\rho^2(\uuiii) = \ep(u) 1$ hence $\sum \uui \rho(\uuii) = S(\rho^2(u))$. On the other hand, equation (\ref{eq:triality_Hopf}) applied to $S(u)$ gives $\sum S(\uui) \rho(S(\uuii))\rho^2(S(\uuiii)) = \ep(u) 1$ so $\sum \rho^2(\uui)\rho(\uuii)\uuiii = \ep(u)1$ from which $\sum \rho(\uui)\uuii = S(\rho^2(u)) = \sum \uui \rho(\uuii)$. This proves  part a) in case that $i =1, j=0$. The other cases follow from this one by using $\rho$.

Since $\sigma(u) = S(u)$ and $\sigma(v) = S(v)$, then
\begin{eqnarray*}
    P\left( \rho(u)\rho(S(v)) \right) & = & \sum \sigma(\rho(u_{(1)})\rho(S(v_{(1)})))S(\rho(u_{(2)})\rho(S(v_{(2)})))\\
    & = & \sum \rho^2 (S(u_{(1)})) \rho^2(v_{(1)}) \rho(v_{(2)}) S(\rho(u_{(2)}))\\
    & = & \sum \rho^2(S(u_{(1)})) v \rho(S(u_{(2)}))
\end{eqnarray*}
and we obtain that $\sum \rho^2(S(\uui))v\rho(S(\uuii))\in \MH(H)$. In the same way the element $\sum \rho(S(v_{(1)})) u \rho^2(S(v_{(2)}))$ belongs to $\MH(H)$. We can use the triality condition on this element to obtain that
\begin{eqnarray*}
   \ep(uv) 1 &=& \sum \rho(S(v_{(1)})) \uui \rho^2(S(v_{(2)})) S(v_{(3)}) \rho^2(\uuii) \rho(S(v_{(4)})) \\
   &&\quad\quad \rho^2(S(v_{(5)})) \rho(\uuiii) S(v_{(6)})\\
   &=& \sum \rho(S(v_{(1)})) \uui \rho(v_{(2)}) \rho^2(\uuii) v_{(3)} \rho(\uuiii) S(v_{(4)})
\end{eqnarray*}
so
\begin{eqnarray*}
   \ep(uv) 1 &=& \sum S(v_{(1)})\rho(S(v_{(2)})) \uui \rho(v_{(3)}) \rho^2(\uuii) v_{(4)} \rho(\uuiii) \\
   &=& \sum \rho^2(v_{(1)}) \uui \rho(v_{(2)}) \rho^2(\uuii) v_{(3)} \rho(\uuiii).
\end{eqnarray*}
This equation implies that $\sum \rho^2(u_{(1)}) v \rho(u_{(2)}) =  \sum  \rho(S(v_{(1)}))S(u)\rho^2(S(v_{(2)}))$.
Replacing $u$ by $S(u)$ we get part b).
\end{proof}

\begin{remark}
Part a) of the lemma shows that the role of $\rho$ and $\rho^2$ can be switched in the definition of Hopf algebra with triality. Moreover, assume that $\rho\sigma$ is chosen instead of $\sigma$ and set $Q(x) = \sum \rho\sigma(x_{(1)}) S(x_{(2)})$. Then $Q(x) = \rho^2(P(\rho(x)))$ so the triality relation $\sum Q(x_{(1)})\rho(Q(x_{(2)}))\rho^2(Q(x_{(3)})) =\ep(x)1$ holds if and only if $\sum P(x_{(1)})\rho(P(x_{(2)}))\rho^2(P(x_{(3)})) = \ep(x) 1$. Therefore, $\sigma$ can be replaced by $\rho\sigma$ in the definition of Hopf algebra with triality. The same is true for $\rho^2\sigma$, so the definition does not depend on the generators $\rho, \sigma$ of the group $\Sthree=\langle \rho,\sigma \rangle$.
\end{remark}

\begin{theorem}
\label{thm:product_MH}
Let $H$ be a cocommutative Hopf algebra with triality relative to $\rho$ and $\sigma$. Then $\MH(H)$ is a unital cocommutative Moufang-Hopf algebra with the coalgebra structure and antipode inherited from $H$, the same unit element and product defined by
\begin{displaymath}
    u*v = \sum \rho^2(S(\uui)) v \rho(S(\uuii)) = \sum \rho(S(v_{(1)})) u \rho^2(S(v_{(2)})).
\end{displaymath}
\end{theorem}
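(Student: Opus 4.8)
The plan is to check, in the order dictated by the definition, the three things required of $\MH(H)$ under the product $*$: that it is a cocommutative, coassociative, unital bialgebra; that the restriction of $S$ serves as an antipode; and that the left Moufang-Hopf identity (\ref{eq:Moufang-Hopf}) holds. Closure of $*$ and the coincidence of its two expressions are already supplied by Lemma \ref{lem:commutation}(b), while $1 = P(1)$ lies in $\MH(H)$ and is a two-sided unit because $\Delta(1) = 1 \otimes 1$ and $S(1) = 1$ give $1 * v = \rho^2(S(1))\,v\,\rho(S(1)) = v$ and, from the second expression, $u * 1 = u$. Coassociativity and cocommutativity are inherited at no cost, as $\MH(H)$ is a subcoalgebra of the cocommutative $H$.

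For compatibility of $*$ with the coalgebra structure I would avoid Sweedler gymnastics and instead note that the map $\Phi\colon H \otimes H \to H$, $\Phi(u \otimes v) = \sum \rho^2(S(\uui))\,v\,\rho(S(\uuii))$, is a composite of coalgebra homomorphisms: $\Delta \otimes \Id$, then the transposition of the second and third tensor factors, then $\rho^2 S \otimes \Id \otimes \rho S$, and finally the iterated multiplication $H^{\otimes 3} \to H$. Every factor is a coalgebra map --- $S$ since $H$ is cocommutative, $\rho$ and $\rho^2$ since they are Hopf automorphisms, and multiplication since $H$ is a bialgebra --- so $\Phi$ is as well. Restricting $\Phi$ to $\MH(H)\otimes\MH(H)$, whose image sits in $\MH(H)$ by Lemma \ref{lem:commutation}(b), delivers simultaneously $\Delta(u*v) = \sum (\uui * v_{(1)})\otimes(\uuii * v_{(2)})$ and $\ep(u*v) = \ep(u)\ep(v)$.

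The four antipode identities I would verify by direct expansion, using the first expression for $*$ throughout in $\sum S(\uui)*(\uuii * v)$ and $\sum \uui *(S(\uuii)*v)$, and the second expression throughout in $\sum (v * \uui)*S(\uuii)$ and $\sum (v*S(\uui))*\uuii$. In each case one invokes $S^2 = \Id_H$ (valid because $H$ is cocommutative) to remove the double antipodes, reindexes by cocommutativity so that the two $u$-factors destined to annihilate become adjacent, and then collapses them through the ordinary antipode axioms of $H$ to arrive at $\ep(u)v$. Notably this step leans only on $S^2 = \Id_H$, cocommutativity, and the Hopf axioms of $H$; the triality condition is not needed here.

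The genuine obstacle is the left Moufang-Hopf identity $\sum \uui *(v*(\uuii * w)) = \sum ((\uui * v)*\uuii)*w$. I would expand both sides completely into $H$. On the left, $w$ remains as the central factor while $u$ is comultiplied into four parts and $v$ into two, so the left side takes the shape $\sum \rho^2(\,\cdots\,)\,w\,\rho(\,\cdots\,)$, the elided arguments being products of $S$ applied to the comultiplied parts of $u$ and $v$; on the right the roles are exchanged, with $v$ central and $w$ comultiplied. Thus the two sides are visibly different, and the whole content of the identity is the transfer of the comultiplication from one variable to the other. I would effect this using cocommutativity (which, via Lemma \ref{lem:commutation}(a), lets me permute $\rho$-powers and reorder the repeated $u$-indices at will), $S^2 = \Id_H$, the equality $S = \sigma$ on $\MH(H)$, and, decisively, the low-order consequences of triality recorded in the proof of Lemma \ref{lem:commutation} --- namely $\sum \uui \rho(\uuii) = S(\rho^2(u))$, $\sum \rho(\uui)\uuii = S(\rho^2(u))$, their $\rho$-translates such as $\sum \rho(\uui)\rho^2(\uuii) = S(u)$, and the triality relation $\sum \uui \rho(\uuii)\rho^2(\uuiii) = \ep(u)1$ itself. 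These are precisely what allow the $\rho$-twisted copies of $u$ to cancel and the comultiplication to be reinstated on the remaining variable. I expect the only real difficulty to be the Sweedler bookkeeping in this matching; keeping it manageable is helped by using both expressions for $*$ from Lemma \ref{lem:commutation}(b) so as to minimize the number of $\rho$-twists introduced at each nesting. The middle and right Moufang-Hopf identities then follow formally from the left one, as remarked after (\ref{eq:Moufang-Hopf}).
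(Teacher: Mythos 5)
You handle the routine verifications correctly, and in two places your route differs mildly (and validly) from the paper's: you factor the product map through a composite of coalgebra morphisms to obtain compatibility with $\Delta$ and $\ep$ (the paper dismisses this as ``clear''), and you verify all four antipode identities by direct expansion, whereas the paper proves the two left-hand ones and deduces the right-hand ones from $S(u*v)=S(v)*S(u)$. Your observation that this step needs only $S^2=\Id_H$, cocommutativity and the Hopf axioms of $H$ --- not triality --- is correct.

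The genuine gap is the left Moufang--Hopf identity, which is the entire substance of the theorem and which you never actually prove. You name the right ingredients --- Lemma \ref{lem:commutation}(a), the relations $\sum u_{(1)}\rho(u_{(2)})=S(\rho^2(u))$ and $\sum u_{(1)}\rho(u_{(2)})\rho^2(u_{(3)})=\ep(u)1$ valid on $\MH(H)$, both expressions for $*$ --- but then defer the argument to ``Sweedler bookkeeping'', and your chosen framing makes that deferral fatal. Expanding as you propose, the left side becomes $\sum \rho^2(S(u_{(1)}))\rho^2(S(v_{(1)}))\rho^2(S(u_{(2)}))\,w\,\rho(S(u_{(3)}))\rho(S(v_{(2)}))\rho(S(u_{(4)}))$, with $w$ in the central slot, while the right side becomes an expression with $v$ central and $w$ split into two wrapping factors. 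No reindexing can pass between these two shapes: one must exchange which variable occupies the central slot, and that exchange is precisely where the triality hypothesis has to act --- your proposal never shows how. What is missing is the pivot the paper's computation runs through: expand $\sum((u_{(1)}*v)*u_{(2)})*w$ using the form of $*$ that keeps the $u$-factor central in the middle product, so that the three interleaved factors $\rho(u_{(2)})u_{(3)}\rho^2(u_{(4)})$ appear and collapse to a counit by the triality relation (permuted via Lemma \ref{lem:commutation}(a)); then $\rho^2(S(v_{(1)}))\rho(S(v_{(2)}))$ collapses to $v$, producing the intermediate expression $\sum(u_{(1)}vu_{(2)})*w$, which upon re-expansion --- now with $w$ central --- regroups as $\sum u_{(1)}*(v*(u_{(2)}*w))$. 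Identifying and executing this chain of collapses and re-expansions is the proof; as written, your proposal is a plausible plan for it, not an argument.
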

\begin{proof}
We first observe that since $\Delta(P(x)) = \sum P(\xxi)\otimes P(\xxii)$, then $\MH(H)$ is a subcoalgebra of $H$.
The product is well defined because of Lemma \ref{lem:commutation}. This product is clearly a homomorphism of coalgebras $ \MH(H)\otimes \MH(H) \rightarrow \MH(H)$ so $\MH(H)$ is a (nonassociative) bialgebra. The unit element of $H$ satisfies $1*v = 1v1 = v$ and $u*1 = 1u1 = u$ by Lemma \ref{lem:commutation}. The antipode $S$ restricts to a corresponding antipode on $\MH(H)$. In fact,
\begin{eqnarray*}
    \sum S(\uui)*(\uuii*v) &=& \sum S(\uui)*(\rho^2(S(\uuii)) v \rho(S(\uuiii))) \\
    &=& \sum \rho^2(u_{(1)})\rho^2(S(u_{(2)})) v \rho(S(u_{(3)})) \rho(u_{(4)})\\
    &=& \ep(u)v\\
    &=& \sum \uui*(S(\uuii)*v).
\end{eqnarray*}
Since $S(u*v) = \sum \rho(u_{(1)}) S(v) \rho^2(u_{(2)}) = S(v)*S(u)$, then we get
$$\sum (v*\uui)*S(\uuii) = \ep(u)v = \sum (v*S(\uui))*\uuii.$$

Finally,
\begin{eqnarray*}
 && \hskip -.8 truein  \sum ((\uui * v)*\uuii)*w =  \sum ((\rho^2(S(\uui))v\rho(S(\uuii)))*\uuiii)*w\\
    &\qquad  \quad =&  \sum (\uui \rho^2(S(v_{(1)})) \rho(\uuii) \uuiii \rho^2(u_{(4)})  \rho(S(v_{(2)})) u_{(5)})* w \\
    &\qquad \quad =& \sum (\uui \rho^2(S(v_{(1)})) \rho(S(v_{(2)})) u_{(2)})* w\\
    &\qquad \quad =& \sum (\uui v u_{(2)})* w\\
    &\qquad \quad =& \sum \rho^2(S(u_{(1)}))\rho^2(S(v_{(1)}))\rho^2(S(u_{(2)}))w \rho(S(u_{(3)}))\rho(S(v_{(2)}))\rho(S(u_{(4)}))  \\
    &\qquad \quad =& \sum u_{(1)}*(\rho^2(S(v_{(1)}))\rho^2(S(u_{(2)}))w \rho(S(u_{(3)}))\rho(S(v_{(2)})))  \\
    &\qquad \quad =& \sum u_{(1)}*(v*(\rho^2(S(u_{(2)}))w \rho(S(u_{(3)})))  \\
    &\qquad \quad =& \sum u_{(1)}*(v*(u_{(2)}*w)).
\end{eqnarray*}
\end{proof}

For any Moufang-Hopf algebra $U$ and $m \in U$ consider $P_m = \sum R_{S(\mmi)}L_{S(\mmii)}$.

\begin{lemma}
\label{lem:multiplication_algebra}
Let $U$ be a cocommutative Moufang-Hopf algebra. Then for any $m, n \in U$:
\begin{itemize}
    \item[i)] $P_1 = L_1 = R_1 = \Id_U$,
    \item[ii)] $\sum P_\mmi L_\mmii R_\mmiii = \ep(m) \Id_U$,
    \item[iii)] $\sum P_\mmi P_n P_\mmii = \sum P_{\mmi n \mmii}$, \ \ $ \sum L_\mmi L_n L_\mmii = \sum L_{\mmi n \mmii}$, \newline $ \sum R_\mmi R_n R_\mmii = \sum R_{\mmi n \mmii}$,
    \item[iv)] $\sum  R_\mmi P_n L_\mmii = P_{S(m)n}$, \ \ $\sum  P_\mmi L_n R_\mmii = L_{S(m)n}$, \newline $\sum  L_\mmi R_n P_\mmii = R_{S(m)n}$,
    \item[v)] $\sum  L_\mmi P_n R_\mmii = P_{nS(m)}$, $\sum  R_\mmi L_n P_\mmii = L_{nS(m)}$ and \newline
        $\sum  P_\mmi R_n L_\mmii = R_{nS(m)}$.
\end{itemize}
\end{lemma}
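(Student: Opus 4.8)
The plan is to read all five items as relations among three elements of a single convolution algebra. Let $C=\Hom(U,\Endo(U))$, where $\Endo(U)$ carries ordinary composition $\circ$ and the convolution of $\phi,\psi\in C$ is $(\phi\star\psi)(m)=\sum \phi(\mmi)\circ\psi(\mmii)$, with unit $m\mapsto\ep(m)\Id_U$; associativity of $\star$ is exactly coassociativity of $U$. The operators assemble into elements $L,R,P\in C$ via $L(m)=L_m$, $R(m)=R_m$, $P(m)=P_m$. The first thing I would record is that the four antipode axioms say precisely that $L$ and $R$ are $\star$-invertible, with $L^{-1}=L\circ S$ and $R^{-1}=R\circ S$: each axiom is one of $L\star(L\circ S)=(L\circ S)\star L=$ unit and the two analogues for $R$ (the two $R$-relations needing a single use of cocommutativity to interchange the two legs of $\Delta$). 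Since by definition $P(m)=\sum R_{S(\mmi)}L_{S(\mmii)}=\bigl((R\circ S)\star(L\circ S)\bigr)(m)$, this identifies $P=R^{-1}\star L^{-1}=(L\star R)^{-1}$.

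With this in hand the easy items are immediate. Part i) follows from unitality together with $S(1)=1$. Part ii) is nothing but $P\star L\star R=(L\star R)^{-1}\star(L\star R)=$ unit, which spelled out is $\sum P_\mmi L_\mmii R_\mmiii=\ep(m)\Id_U$. The $L$- and $R$-versions of iii) come straight from the Moufang--Hopf identities: applying $\sum L_\mmi L_n L_\mmii$ to $w$ gives $\sum \mmi(n(\mmii w))$, which the left Moufang--Hopf identity rewrites as $\sum((\mmi n)\mmii)w=\sum L_{\mmi n\mmii}(w)$; the $R$-version is the same computation, now through the right Moufang--Hopf identity, after one use of cocommutativity to interchange the two $m$-legs.

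The real work is the $P$-version of iii) and all of iv) and v). For these I would expand every $P$ through $P_k(z)=\sum(S(k_{(1)})z)S(k_{(2)})$ and then collapse the resulting bracketed words in $U$ by repeated, carefully chosen applications of the three Moufang--Hopf identities, the middle identity
\[
\textstyle\sum(\mmi(vw))\mmii=\sum(\mmi v)(w\mmii)
\]
doing most of the work, together with the antipode axioms and cocommutativity; the targets $P_{S(m)n}$, $L_{S(m)n}$, $R_{nS(m)}$, etc. are then reached by re-expressing $S$ of the relevant products via the antipode relations. I expect the main obstacle to be exactly the nonassociativity: one cannot reassociate at will, so each rebracketing has to be engineered to be a genuine instance of one of the Moufang--Hopf identities, and the $S$-factors produced by the $P$'s must be made to cancel against the bialgebra structure at precisely the right step. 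I would carry out one representative of iv) and one of v) in full detail, organizing the remaining two in each family by the evident cyclic pattern among $L,R,P$ in the statements; where that cyclic relabeling is not directly available at the level of the bare Moufang--Hopf algebra, the companions are obtained by the identical bracket-manipulation method rather than by an independent idea.
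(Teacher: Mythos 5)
Your convolution--algebra framework is sound and handles the easy parts correctly: in $\Hom(U,\Endo(U))$ with $(\phi\star\psi)(m)=\sum\phi(\mmi)\circ\psi(\mmii)$, the antipode axioms do say $L^{-1}=L\circ S$ and $R^{-1}=R\circ S$ (modulo one use of cocommutativity), hence $P=R^{-1}\star L^{-1}=(L\star R)^{-1}$, and i), ii) and the $L$- and $R$-identities of iii) follow exactly as you say. This is a genuinely clean repackaging (and it anticipates the formalism the paper itself introduces later for $\Atp_\C(U)$). The problem is that everything you correctly identify as ``the real work'' --- the $P$-identity in iii) and all six identities in iv) and v) --- is never actually proved: you announce that you \emph{would} expand each $P$ via $P_k(z)=\sum(S(k_{(1)})z)S(k_{(2)})$ and collapse the resulting words ``by repeated, carefully chosen applications'' of the Moufang--Hopf identities, and that you \emph{would} carry out one representative of iv) and one of v) in detail, but no such computation appears. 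These steps are precisely where the content of the lemma lies, and they are not routine: for instance, the paper's proof of $\sum R_\mmi P_n L_\mmii = P_{S(m)n}$ first \emph{inserts} factors using the antipode axiom, rewriting $\sum \mmi\otimes \mmii x$ as $\sum \mmiv\otimes(\mmi x\,\mmii)S(\mmiii)$, and only then can the middle and right Moufang--Hopf identities be applied in a specific order so that the $S$-factors cancel. Asserting that a suitable sequence of rebracketings exists is not the same as exhibiting one, and your own text concedes that each rebracketing ``has to be engineered.''

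A second, smaller gap: you propose to obtain the remaining identities in each of iii)--v) ``by the evident cyclic pattern among $L,R,P$.'' At this stage no such symmetry is available --- the $\Sthree_3$-action permuting $P_m\mapsto L_m\mapsto R_m$ lives on the abstract algebra $\Doro(U)$, and the fact that it is well defined (i.e.\ that it respects the relations) is a \emph{consequence} of this lemma, not something one may invoke to prove it. You hedge this correctly, but the fallback is again ``the identical bracket-manipulation method,'' i.e.\ computations that are not supplied. So the proposal is a plausible plan whose skeleton coincides with the paper's proof, but as written it establishes only parts i), ii) and two-thirds of iii); the remaining seven identities require the explicit manipulations to be carried out.
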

\begin{proof}
Parts i) and ii) are obvious. We will prove the first identities in parts iii), iv) and v). The proofs of the remaining identities are left to the reader.

iii)  Using the middle Moufang-Hopf identity,
\begin{eqnarray*}
    \sum P_\mmi P_n P_\mmii(x) & = & \sum S(\mmi)(S(\nni)(S(\mmii) x S(\mmiii))S(\nnii))S(\mmiv)\\
        &=& \sum S(\mmi)((S(\nni)S(\mmii))x(S(\nnii)S(\mmiii)))S(\mmii)\\
        &=& \sum (S(\mmi)S(\nni)S(\mmii)) x (S(\mmiii)S(\nnii)S(\mmiv))\\
        &=& \sum P_{\mmi n \mmii}(x).
\end{eqnarray*}

iv) The middle and right Moufang-Hopf identities imply
\begin{eqnarray*}
    && \hskip -.75 truein \sum  R_\mmi P_n L_\mmii (x) =  \sum (S(\nni)(\mmii x)S(\nnii))\mmi \\
     &\qquad \qquad =& \sum (S(\nni)((\mmi x \mmii)S(\mmiii))S(\nnii))\mmiv\\
     &\qquad \qquad =& \sum ((S(\nni)(\mmi x \mmii))(S(\mmiii)S(\nnii)))\mmiv\\
     &\qquad \qquad =& \sum ((S(\nni)(\mmi x \mmii))S(\mmiii))(\mmiv(S(\mmv)S(\nnii))m_{(6)})\\
    &\qquad\qquad =&\sum (S(\nni)\mmi)x(S(\nnii)\mmii)\\
    &\qquad\qquad =& P_{S(m)n}(x).
\end{eqnarray*}

v) By the right, middle and left Moufang-Hopf identities
\begin{eqnarray*}
&& \hskip -.75 truein    \sum  L_\mmi P_n R_\mmii(x) = \sum \mmi(S(\nni) ((xS(\mmii))(\mmiii 1 \mmiv)) S(\nnii))\\
     &\qquad \qquad =& \sum \mmi((S(\nni)(xS(\mmii)))((\mmiii \mmiv)S(\nnii)))\\
 &\qquad \qquad =& \sum (\mmi(S(\nni)(xS(\mmii)))\mmiii)
       (S(\mmiv)((\mmv m_{(6)}) S(\nnii)))\\
    &\qquad \qquad =& \sum ((\mmi S(\nni))x)(\mmii S(\nnii))\\
   &\qquad \qquad =& P_{nS(m)}(x).
\end{eqnarray*}
\end{proof}

Given a cocommutative Moufang-Hopf algebra $U$ we define $\Doro(U)$ as the unital associative algebra generated by abstract symbols $\{ P_m, L_m, R_m \mid m \in U\}$ subject to the relations
\begin{eqnarray*}
        & P_1 = L_1 = R_1 = 1, & \\
	& P_{\alpha m + \beta n} = \alpha P_m + \beta P_n, \quad L_{\alpha m + \beta n} = \alpha L_m + \beta L_n, \quad
	  R_{\alpha m + \beta n} = \alpha R_m + \beta R_n, & \\
        & \sum P_\mmi L_\mmii R_\mmiii = \ep(m) 1, & \\
      & \sum P_\mmi P_n P_\mmii = \sum P_{\mmi n \mmii}, \quad \sum L_\mmi L_n L_\mmii = \sum L_{\mmi n \mmii}, & \\
       & \sum R_\mmi R_n R_\mmii = \sum R_{\mmi n \mmii}, & \\
      & \sum  R_\mmi P_n L_\mmii = P_{S(m)n},\quad \sum  P_\mmi L_n R_\mmii = L_{S(m)n}, & \\
       & \sum  L_\mmi R_n P_\mmii = R_{S(m)n}, & \\
      & \sum  L_\mmi P_n R_\mmii = P_{nS(m)},\quad  \sum  R_\mmi L_n P_\mmii = L_{nS(m)} \text{ and } & \\
       & \sum  P_\mmi R_n L_\mmii = R_{nS(m),} &
\end{eqnarray*}for any $\alpha, \beta \in F$ and $m,n \in U$. The maps
\begin{displaymath}
    \Delta\colon P_m \mapsto \sum P_\mmi \otimes P_\mmii, \quad L_m \mapsto \sum L_\mmi \otimes L_\mmii, \quad R_m \mapsto \sum R_\mmi \otimes R_\mmii
\end{displaymath}
\begin{displaymath}
    \ep\colon P_m \mapsto \ep(m) 1, \quad L_m \mapsto \ep(m) 1, \quad R_m \mapsto \ep(m) 1
\end{displaymath}
\begin{displaymath}
    S\colon P_m \mapsto P_{S(m)}, \quad L_m \mapsto L_{S(m)}, \quad R_m \mapsto R_{S(m)}
\end{displaymath}
induce corresponding homomorphisms of algebras $\Delta \colon \Doro(U) \rightarrow \Doro(U) \otimes \Doro(U)$, $\ep \colon \Doro(U) \rightarrow F$ and $S \colon \Doro(U) \rightarrow \Doro(U)$ that make $\Doro(U)$ a cocommutative Hopf algebra. In fact, $\Doro(U)$ relative to the automorphisms induced by
\begin{displaymath}
    \begin{array}{ccccccc}
            &   \rho  & & & & \sigma & \\
        P_m & \mapsto & L_m & & P_m & \mapsto & P_{S(m)}\\
        L_m & \mapsto & R_m & & L_m & \mapsto & R_{S(m)}\\
        R_m & \mapsto & P_m & & R_m & \mapsto & L_{S(m)}
    \end{array}
\end{displaymath}
is a Hopf algebra with triality.

\begin{theorem}
For any cocommutative Moufang-Hopf algebra $U$, the map
\begin{eqnarray*}
    \iota \colon U &\rightarrow& \MH(\Doro(U)) \\
    m & \mapsto & P_m
\end{eqnarray*}
is an isomorphism of Moufang-Hopf algebras. Moreover, $\Doro(U)$ satisfies the following universal property: given a Hopf algebra with triality $H$ and a homomorphism $\varphi \colon U \rightarrow \MH(H)$ of Moufang-Hopf algebras, $\varphi$ extends to a homomorphism $\bar{\varphi} \colon \Doro(U) \rightarrow H$ of Hopf algebras with triality (i.e. commutes with the action of $\rho$ and $\sigma$) such that the diagram
\begin{center}
\begin{tikzpicture}
\matrix(m)[matrix of math nodes,
row sep=3em, column sep=3em,
text height=1.5ex, text depth=0.25ex]
{ & \Doro(U)\\
U & H \\ };
\path[->, font=\scriptsize] (m-2-1) edge node[above]{$\varphi$} (m-2-2);
\path[right hook->, font=\scriptsize](m-2-1) edge node[left]{$\iota$} (m-1-2);
\path[->, font=\scriptsize](m-1-2) edge node[right]{$\bar{\varphi}$} (m-2-2);
\end{tikzpicture}
\end{center}
commutes.
\end{theorem}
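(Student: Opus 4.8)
The plan is to prove the isomorphism first and the universal property second, both resting on one structural identity. Since $\sigma$ is an algebra automorphism, $S$ an anti-automorphism, and $\Delta$ multiplicative, a direct computation gives the conjugation formula
\[
P(xy)=\sum \sigma(x_{(1)})\,P(y)\,S(x_{(2)}) \qquad (x,y\in\Doro(U)),
\]
which I would record at the outset. First I would check that $\iota$ lands in $\MH(\Doro(U))$ and is a morphism of Moufang-Hopf algebras. Evaluating $P$ on a single generator and using relation iv) with $n=1$ (together with cocommutativity and $S^2=\Id$) yields $P(R_m)=P_m$, so $P_m\in\MH(\Doro(U))$. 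The equalities $\Delta(P_m)=\sum P_{m_{(1)}}\otimes P_{m_{(2)}}$ and $\ep(P_m)=\ep(m)$ are the defining ones, so $\iota$ is a coalgebra map, while $S(P_m)=P_{S(m)}=\iota(S(m))$ and $P_1=1$ handle the antipode and the unit. For the product, $P_m * P_n=\sum \rho^2(S(P_{m_{(1)}}))P_n\rho(S(P_{m_{(2)}}))=\sum R_{S(m_{(1)})}P_nL_{S(m_{(2)})}=P_{mn}$ by relation iv), so $\iota(m)*\iota(n)=\iota(mn)$.

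For injectivity I would invoke Lemma \ref{lem:multiplication_algebra}: the genuine multiplication operators $\mathbf P_m,\mathbf L_m,\mathbf R_m$ on $U$ satisfy exactly the defining relations of $\Doro(U)$, so there is an algebra homomorphism $\Phi\colon\Doro(U)\to\Endo_F(U)$ with $\Phi(R_m)=\mathbf R_m$. Since $R_m=\rho^2(P_m)=\rho^2(\iota(m))$ and $\mathbf R_m(1)=m$, the linear map $\pi=\mathrm{ev}_1\circ\Phi\circ\rho^2$ satisfies $\pi\circ\iota=\Id_U$, giving injectivity. Surjectivity is the step I expect to be the main obstacle, since a priori $\Doro(U)$ is large and $P(y)$ for a complicated $y$ need not visibly be some $P_m$; this is where the conjugation formula pays off. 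By linearity of $P$ it suffices to show $P(y)\in\iota(U)$ for $y$ a product of generators, which I would prove by induction on the number of factors: the empty product gives $P(1)=1=P_1$, and writing $y=X\cdot y'$ with $X$ a generator and $P(y')=P_{m'}$ by induction, the formula gives $P(y)=\sum\sigma(X_{(1)})P_{m'}S(X_{(2)})$; according as $X$ is $P_a$, $L_a$ or $R_a$ this collapses, via relation iii), iv) or v) respectively, to $P_{\sum S(a_{(1)})m'S(a_{(2)})}$, $P_{am'}$ or $P_{m'a}$, all in $\iota(U)$. Hence $\MH(\Doro(U))=\iota(U)$ and $\iota$ is an isomorphism of Moufang-Hopf algebras.

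For the universal property I would set $\bar\varphi(P_m)=\varphi(m)$, $\bar\varphi(L_m)=\rho(\varphi(m))$, $\bar\varphi(R_m)=\rho^2(\varphi(m))$ and verify that the defining relations of $\Doro(U)$ are respected in $H$; writing $p=\varphi(m)$, $q=\varphi(n)\in\MH(H)$, one has $\Delta p=\sum\varphi(m_{(1)})\otimes\varphi(m_{(2)})$, $S(p)=\varphi(S(m))$ and $\varphi(m\cdot n)=p*q$. Relation i) is immediate; relation ii) is exactly $\sum p_{(1)}\rho(p_{(2)})\rho^2(p_{(3)})=\ep(p)1$, which holds for $p\in\MH(H)$ by the triality identity as in the proof of Lemma \ref{lem:commutation}; relations iv) and v) reduce, using the two expressions for $*$ together with cocommutativity and $S^2=\Id$, to $S(p)*q=\sum\rho^2(p_{(1)})q\rho(p_{(2)})$ and $q*S(p)=\sum\rho(p_{(1)})q\rho^2(p_{(2)})$; and relation iii) amounts to $\sum(p_{(1)}*q)*p_{(2)}=\sum p_{(1)}qp_{(2)}$, which is precisely the $w=1$ specialization of the associativity computation carried out in the proof of Theorem \ref{thm:product_MH}. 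Thus $\bar\varphi$ is a well-defined algebra homomorphism; it is a bialgebra (hence Hopf) map because $\rho$ is a Hopf automorphism of $H$ and $\varphi$ is a coalgebra map, and it commutes with $\rho$ and $\sigma$ by the relations $\rho(P_m)=L_m$, $\sigma(P_m)=P_{S(m)}$, $\sigma(\varphi(m))=S(\varphi(m))$ and $\sigma\rho=\rho^2\sigma$. Finally $\bar\varphi(\iota(m))=\bar\varphi(P_m)=\varphi(m)$ gives commutativity of the diagram, and since $\bar\varphi$ is forced on $P_m$ by the diagram and on $L_m,R_m$ by $\rho$-equivariance, it is the unique such extension.
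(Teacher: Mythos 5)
Your proposal is correct and follows essentially the same route as the paper's proof: injectivity via the representation of $\Doro(U)$ on the multiplication algebra of $U$ supplied by Lemma \ref{lem:multiplication_algebra}, surjectivity via the conjugation formula $P(xy)=\sum \sigma(x_{(1)})P(y)S(x_{(2)})$ combined with relations iii)--v) applied to the three types of generators, and the universal property by checking the defining relations of $\Doro(U)$ against the triality identity and the two expressions for the product $*$ (the paper likewise verifies one identity per relation group and obtains the rest under the action of $\rho$). The only quibbles are cosmetic: relation iv) with $n=1$ gives $P(L_m)=P_m$ while it is relation v) that gives $P(R_m)=P_m$ (either suffices), and you run the injectivity argument through $R_m=\rho^2(P_m)$ where the paper uses $L_m=\rho(P_m)$.
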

\begin{proof}
Let us first prove that $\iota$ is an isomorphism of Moufang-Hopf algebras. If $P_m = P_n$ then  $L_m = \rho(P_m) = \rho(P_n) = L_n$. However, Lemma \ref{lem:multiplication_algebra} implies that there exists a homomorphism from $\Doro(U)$ to the multiplication algebra of $U$ that sends $L_m$ to the left multiplication operator by $m$. Evaluating on $1$,  we obtain that $m = n$. Thus, $\iota$ is injective.  In order to check that $\iota$ is a homomorphism of algebras observe that
\begin{eqnarray*}
    \iota(m)*\iota(n) &=& P_m * P_n = \sum \rho^2(S(P_\mmi)) P_n \rho(S(P_\mmii))  \\
    &=& \sum R_{S(\mmi)} P_n L_{S(\mmii)} = P_{mn}\\
    &=& \iota(mn).
\end{eqnarray*}
By the definition of $\Delta$, $\ep$ and $S$ it is easily seen that $\iota$ is a homomorphism of Moufang-Hopf algebras. To prove that the map $\iota$ is surjective, we must check that $\MH(\Doro(U)) = \{ P_m \mid m \in U\}$. By definition,  $\MH(\Doro(U)) = \{ P(x) \mid x \in \Doro(U)\}$ but $P(xy) = \sum \sigma(\xxi)P(y)S(\xxii)$,  so we only need to prove that \begin{displaymath}
    P(P_m), P(L_m), P(R_m) \in \MH(\Doro(U))
\end{displaymath}
and that for any $m,n \in U$,
\begin{displaymath}
    \sum \sigma(P_\nni) P_m S(P_\nnii), \sum \sigma(L_\nni) P_m S(L_\nnii), \sum \sigma(R_\nni) P_m S(R_\nnii)
\end{displaymath}
also belong to $\MH(\Doro(U))$. By the definition and relations on $\Doro(U)$
\begin{eqnarray*}
    P(P_m) &=& \sum \sigma(P_\mmi)S(P_\mmii) = \sum P_{S(\mmi)}P_{S(\mmii)} = \sum P_{S(\mmi) S(\mmii)},\\
    P(L_m) &=& \sum \sigma(L_\mmi)S(L_\mmii) = \sum R_{S(\mmi)}L_{S(\mmii)} = P_m \text{ and}\\
    P(R_m) &=& \sum \sigma(R_\mmi)S(R_\mmii) = \sum L_{S(\mmi)}R_{S(\mmii)} = P_m
\end{eqnarray*}
so $P(P_m), P(L_m)$ and $P(R_m)$ belong to $\MH(\Doro(U))$. The relations on $\Doro(U)$ also imply that
\begin{eqnarray*}
 \sum \sigma(P_\nni)P_mS(P_\nnii) &=& \sum P_{S(\nni)}P_m P_{S(\nnii)} =  P_{\sum S(\nni)mS(\nnii)}, \\
 \sum \sigma(L_\nni)P_mS(L_\nnii) &=& \sum R_{S(\nni)}P_m L_{S(\nnii)} =  P_{nm} \text{ and}\\
\sum \sigma(R_\nni)P_mS(R_\nnii) &=& \sum L_{S(\nni)}P_m R_{S(\nnii)} =  P_{mn}
\end{eqnarray*}
so $\iota$ is surjective.

Let $\varphi \colon U \rightarrow \MH(H)$ be a homomorphism of Moufang-Hopf algebras where $H$ is a cocommutative Hopf algebra with triality. Given $m, n \in U$, the elements $\varphi(m), \varphi(n)$ satisfy
\begin{displaymath}
 \sum \varphi(\mmi) \rho(\varphi(\mmii)) \rho^2(\varphi(\mmiii)) = \ep(m),\\
\end{displaymath}
and by the definition of the product $*$ of $\MH(H),$
\begin{eqnarray*}
&& \sum \varphi(\mmi) \varphi(n) \varphi(\mmii) = \sum \mmi * n * \mmii,\\
&& \sum \rho^2(\varphi(\mmi))\varphi(n)\rho(\varphi(\mmii)) = \varphi(S(m))*\varphi(n) \text{ and}
\\
&& \sum \rho(\varphi(\mmi)) \varphi(n) \varphi^2(\mmii) = \varphi(n)*\varphi(S(m)).
\end{eqnarray*}
These identities and the others obtained under the action of $\rho$ show that the correspondence
\begin{displaymath}
\bar{\varphi} \colon P_m \mapsto \varphi(m), \quad L_m \mapsto \rho(\varphi(m)), \quad R_m \mapsto \rho^2(\varphi(m))
\end{displaymath}
induces a homomorphism $\bar{\varphi} \colon \Doro(U) \rightarrow H$ of Hopf algebras  with triality that makes the diagram in the statement commutative.
\end{proof}


\section{Lie algebras with triality}
\label{sec:Lie_algebras_with_triality}

Before any further considerations, let us present the standard example of a Lie algebra with triality. Let $\Oc = \Oc(\alpha,\beta,\gamma)$ be a generalized Cayley algebra with norm $n(\,)$ over a field of characteristic $\neq 2,3$; $\Oc_0$ the Malcev algebra of traceless elements in $\Oc$; and  $\mathfrak{o}(\Oc,n)$ the orthogonal Lie algebra of all $d\in\Endo(\Oc)$ which are skewsymmetric relative to $n(\,)$ \cite{ZSSS}.  The (local) Principle of Triality \cite{Sch95} ensures that for any $d_1\in \mathfrak{o}(\Oc,n)$ there exist unique $d_2,d_3\in \mathfrak{o}(\Oc,n)$ such that
\begin{equation}
\label{eq:ternary_derivations}
d_1(xy) = d_2(x) y + x d_3(y)
\end{equation}
for any $x,y\in \Oc$. The maps $d_1\mapsto d_2$ and $d_1\mapsto d_3$ are automorphisms of $\mathfrak{o}(\Oc,n)$. An explicit description is known.  Recall that $\mathfrak{o}(\Oc,n) = \Der(\Oc) \oplus \langle L_a \mid a\in \Oc_0\rangle \oplus \langle R_b\mid b\in \Oc_0\rangle$ and that the alternative laws $x(xy) = x^2y$, $(yx)x = y x^2$ are equivalent to the following relations
\begin{displaymath}
L_a(xy) = T_a(x)y - x L_a(y) \quad\text{and}\quad R_a(xy) = -R_a(x)y + xT_a(y)
\end{displaymath}
where $L_a$, $R_a$ denote the left and right multiplication operators by $a$, and $T_a = L_a + R_a$. Hence the automorphisms $d_1\mapsto d_2$ and $d_1\mapsto d_3$ are determined by
\begin{displaymath}
    \begin{array}{ccc}
      d & \mapsto & d \\
      L_a & \mapsto & T_a\\
      R_a & \mapsto & -R_a
    \end{array}\quad\text{and}\quad
    \begin{array}{ccc}
      d & \mapsto & d \\
      L_a & \mapsto & -L_a\\
      R_a & \mapsto & T_a
    \end{array}
\end{displaymath}
for any $d\in \Der(\Oc)$ and $a\in \Oc_0$. This proves that the maps $\rho, \sigma\colon \mathfrak{o}(\Oc,n)\to \mathfrak{o}(\Oc,n)$ given by
\begin{displaymath}
    \begin{array}{ccc}
      d & \stackrel{\rho}{\mapsto} & d \\
      L_a & \mapsto & R_a\\
      R_a & \mapsto & -T_a
    \end{array} \quad\text{and}\quad
\begin{array}{ccc}
      d & \stackrel{\sigma}{\mapsto} & d \\
      L_a & \mapsto & -R_a\\
      R_a & \mapsto & -L_a
    \end{array}
\end{displaymath}
are automorphisms, and $\mathfrak{o}(\Oc,n)$ is a Lie algebra with triality relative to $\rho$ and $\sigma$.

In this section we will prove that the universal enveloping algebra of a Lie algebra with triality is a Hopf algebra with triality. In the following, $E(\xi ;f)$ will denote the eigenspace of $f$ corresponding to the eigenvalue $\xi$.
After extending scalars if necessary,  we may assume that our base field $F$ contains a primitive cube root of unity $\omega$.

\begin{lemma}
\label{lem:P}
Let $\g$ be a Lie algebra, $\lambda \colon \Sthree_3 \to \Aut(\g)$ an action of $\Sthree_3$ as automorphisms of $\g$ and $\sigma = \lambda((12))$. Then
\begin{displaymath}
    \set{P(x)\mid x\in U(\g)} = \spann{a_n\circ(\cdots (a_2\circ a_1))\mid a_1,\dots, a_n\in E(-1;\sigma), n\in \mathbb{N}}
\end{displaymath}
where $P(x) = \sum \sigma(\xxi)S(\xxii)$ and  $a\circ x = a x + x a$.
\end{lemma}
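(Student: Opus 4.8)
The plan is to turn the linear map $P$ into a recursion that peels off one factor at a time, and then to show that the two elementary operators this recursion produces both preserve the right-hand span. First I would record the convolution identity $P(xy)=\sum \sigma(\xxi)P(y)S(\xxii)$ (immediate from $\Delta$ being an algebra map and $S$ an anti-automorphism, and already used earlier in the paper). Specializing $x=a\in\g$, where $\Delta(a)=a\otimes 1+1\otimes a$ and $S(a)=-a$, this collapses to $P(ay)=\sigma(a)P(y)-P(y)a$. Assuming $\mathrm{char}\,F\neq 2$ (needed only here), split $\g=\g_+\oplus\g_-$ with $\g_\pm=E(\pm 1;\sigma)$, and introduce operators on $U(\g)$ by $\theta_b\colon x\mapsto b\circ x$ for $b\in\g_-$ and $\delta_a\colon x\mapsto [a,x]$ for $a\in\g_+$. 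Then the recursion reads $P(ay)=-\theta_a(P(y))$ when $a\in\g_-$ and $P(ay)=\delta_a(P(y))$ when $a\in\g_+$, with $P(1)=1$.

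Next I would use PBW: $U(\g)$ is spanned by $1$ together with products $a_1\cdots a_n$ whose factors lie in $\g_+\cup\g_-$. Iterating the recursion gives $P(a_1\cdots a_n)=\phi_{a_1}\cdots\phi_{a_n}(1)$, where each $\phi_{a_i}$ is $\pm\theta_{a_i}$ or $\delta_{a_i}$. Hence the image $I:=\set{P(x)\mid x\in U(\g)}$ is precisely the span of all elements obtained from $1$ by applying words in the operators $\theta_b$ $(b\in\g_-)$ and $\delta_a$ $(a\in\g_+)$. Let $W$ denote the right-hand side of the statement; adopting the convention that the empty product $(n=0)$ is $1=P(1)$ (this is forced, since every nonempty circle product lies in $\ker\ep$, whereas $1\in I$), $W$ is exactly the span of the $\theta$-words $\theta_{a_n}\cdots\theta_{a_1}(1)$ with $a_i\in\g_-$, up to the harmless factor coming from $\theta_{a_1}(1)=2a_1$. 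The inclusion $W\subseteq I$ is then immediate, since taking all $a_i\in\g_-$ shows $\theta_{a_n}\cdots\theta_{a_1}(1)=(-1)^nP(a_n\cdots a_1)$.

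For the reverse inclusion $I\subseteq W$ it suffices to check that $W$ contains $1$ and is stable under every $\theta_b$ and every $\delta_a$, for then each operator-word applied to $1$ lands in $W$. Stability under $\theta_b$ is built into the definition of $W$, so the whole content reduces to $\delta_a$-stability, and this is where I expect the one genuine computation to lie: the operator identity $[\delta_a,\theta_b]=\theta_{[a,b]}$ for $a\in\g_+$, $b\in\g_-$. Granting it, the point is that $\sigma[a,b]=[a,-b]=-[a,b]$, so $[a,b]\in\g_-$ and $\theta_{[a,b]}$ is again admissible; together with $\delta_a(1)=0$, an induction on the length of a $\theta$-word lets me push $\delta_a$ rightward past each $\theta_{a_i}$, each commutation spawning an extra admissible $\theta$-term while the fully commuted boundary term $\theta_{a_n}\cdots\theta_{a_1}\delta_a(1)$ vanishes. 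What remains is a sum of $\theta$-words on $1$, i.e. an element of $W$. Thus $I\subseteq W$, completing the proof. The main obstacle is therefore not conceptual but the bracket identity $[\delta_a,\theta_b]=\theta_{[a,b]}$ and the discipline of tracking that every bracket returns to $\g_-$ and that $\delta_a$ kills the unit; everything else is the convolution recursion and routine induction.
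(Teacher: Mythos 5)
Your proof is correct, and it diverges from the paper's argument at exactly one point: the treatment of the $E(1;\sigma)$ factors. Both arguments rest on the same recursion $P(ay)=\sigma(a)P(y)-P(y)a$ (the paper derives it inside its induction rather than from the convolution identity, but it is the same computation), and both reduce to monomials whose factors are $\sigma$-eigenvectors. The paper then proceeds by induction on filtration degree with a reordering step: if some factor of the monomial lies in $E(1;\sigma)$, it is commuted to the right-hand end at the cost of terms of lower filtration degree (absorbed by the induction hypothesis), where it is annihilated outright, since $P(xa)=\sum\sigma(\xxi)\left(\sigma(a)-a\right)S(\xxii)=0$ when $\sigma(a)=a$; if instead all factors lie in $E(-1;\sigma)$, the recursion peels them off as $-a\circ(\cdot)$. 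You never reorder: an $E(1;\sigma)$ factor stays where it is and becomes the operator $\delta_a=[a,\cdot]$, and the burden shifts to your bracket identity $[\delta_a,\theta_b]=\theta_{[a,b]}$, which indeed holds and shows the right-hand span is stable under every $\delta_a$ (you can get this even faster by noting that $\ad_a$ is a derivation of the associative product, hence of $\circ$, and that it preserves $E(-1;\sigma)$). What your route buys: it avoids the slightly delicate ``up to terms of lower degree'' bookkeeping, it makes both inclusions explicit --- the paper leaves $\supseteq$ implicit in its case (ii), including the scalar $2$ and the fact that the image of the linear map $P$ is a subspace --- and it pins down the $n=0$ convention. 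What the paper's route buys: brevity, and an induction-on-filtration template that is reused in Lemma \ref{lem:basic_equation} and Theorem \ref{thm:ugtriality}. Note that both proofs (yours and the paper's) need characteristic $\neq 2$ for the decomposition $\g=E(1;\sigma)\oplus E(-1;\sigma)$ and for the factor $2$ in $\theta_{a}(1)=2a$; this is not stated in the lemma but is supplied by the standing hypotheses of the section, and you were right to flag it.
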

\begin{proof}
We proceed by induction on the filtration degree of $x\in U(\g)$. In case that $x = a\in \g$, $P(a) = \sigma(a) -a \in  E(-1;\sigma)$. In general, given $a_1 \cdots a_{n+1}\in U(\g)$,  we may assume that $a_i \in E(-1;\sigma)\cup  E(1;\sigma)$,  $i = 1,\dots,n+1$. We distinguish two cases:
\begin{itemize}
\item[i)] \emph{At least one $a_i$ belongs to $E(1;\sigma)$}: in this case, up to terms of lower degree we may assume that $a_{n+1}\in E(1;\sigma)$. With $x= a_1\cdots a_{n}$, $a = a_{n+1}$ we obtain
    \begin{displaymath}
    P(xa) = \sum \sigma(\xxi) \sigma(a) S(\xxii) - \sum \sigma(\xxi) a S(\xxii)  = 0
    \end{displaymath}
\item[ii)]  $a_1,\dots, a_{n+1}\in E(-1;\sigma)$: In this case, with $x = a_2\cdots a_{n+1}$ and $a=a_{1}$ we have
\begin{displaymath}
P(ax) = \sum \sigma(a)\sigma(\xxi)S(\xxii) - \sum \sigma(\xxi)  S(\xxii) a = -a\circ P(x)
\end{displaymath}
and the result follows by induction.
\end{itemize}
\end{proof}

\begin{lemma}
\label{lem:eigen_one}
Given a Lie algebra $\g$ over a field of characteristic $\neq 2,3$ and two automorphisms $\sigma,\rho$ with $\sigma^2 = \rho^3=\Id_\g$ and $\sigma \rho = \rho^2 \sigma$, then $\g$ is a Lie algebra with triality relative to $\rho$ and $\sigma$ if and only if $E(1;\rho)\subseteq E(1;\sigma)$.
\end{lemma}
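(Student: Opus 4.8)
The plan is to reduce the triality identity (\ref{eq:triality_Lie}) to a single statement about eigenspaces by factoring the alternating sum. Writing its left-hand side as $T(a)$ and grouping the positive and negative terms, one sees the factorization
\[
T(a) = \bigl(a + \rho(a) + \rho^2(a)\bigr) - \bigl(\sigma(a) + \rho\sigma(a) + \rho^2\sigma(a)\bigr) = (\Id + \rho + \rho^2)(a - \sigma(a)),
\]
so the triality condition is precisely $\operatorname{im}(\Id - \sigma) \subseteq \ker N$, where $N = \Id + \rho + \rho^2$. First I would extend scalars so that $F$ contains $\omega$; this is harmless, since both the triality identity and the inclusion $E(1;\rho)\subseteq E(1;\sigma)$ are linear conditions already defined over $F$ and hence insensitive to base change. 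Then $\g$ decomposes as $E(1;\rho)\oplus E(\omega;\rho)\oplus E(\omega^2;\rho)$, and since $1 + \omega + \omega^2 = 0$ while $N$ acts as multiplication by $3\neq 0$ on $E(1;\rho)$, one gets $\ker N = E(\omega;\rho)\oplus E(\omega^2;\rho) =: W$. Because $\operatorname{char} F\neq 2$, the map $\Id - \sigma$ vanishes on $E(1;\sigma)$ and is multiplication by $2$ on $E(-1;\sigma)$, so $\operatorname{im}(\Id - \sigma) = E(-1;\sigma)$. Hence triality is equivalent to the inclusion $E(-1;\sigma)\subseteq W$.

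The second ingredient is the compatibility of $\sigma$ with the $\rho$-eigenspace decomposition, coming from the braid relation. From $\sigma\rho = \rho^2\sigma$ one derives $\sigma\rho\sigma^{-1} = \rho^{-1}$, whence $\rho\sigma = \sigma\rho^{-1}$; applying this to eigenvectors shows that $\sigma$ preserves $E(1;\rho)$ and interchanges $E(\omega;\rho)$ and $E(\omega^2;\rho)$. In particular both $E(1;\rho)$ and $W$ are $\sigma$-invariant. Since $\sigma$ is diagonalizable (char $\neq 2$) and respects the splitting $\g = E(1;\rho)\oplus W$, each $\sigma$-eigenspace splits accordingly, and in particular
\[
E(-1;\sigma) = \bigl(E(-1;\sigma)\cap E(1;\rho)\bigr)\oplus\bigl(E(-1;\sigma)\cap W\bigr).
\]

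Combining these, the inclusion $E(-1;\sigma)\subseteq W$ holds if and only if $E(-1;\sigma)\cap E(1;\rho) = 0$. Finally, using once more that $E(1;\rho)$ is $\sigma$-invariant, $\sigma|_{E(1;\rho)}$ is an involution and $E(1;\rho) = (E(1;\rho)\cap E(1;\sigma))\oplus(E(1;\rho)\cap E(-1;\sigma))$; thus $E(-1;\sigma)\cap E(1;\rho) = 0$ is exactly the assertion that $\sigma$ acts as the identity on $E(1;\rho)$, i.e. $E(1;\rho)\subseteq E(1;\sigma)$. This closes the chain of equivalences. I expect the main obstacle to be spotting the factorization $T = (\Id + \rho + \rho^2)(\Id - \sigma)$ and correctly deducing the $\sigma$-invariance of the $\rho$-eigenspaces from the braid relation; once these structural facts are in place, the remainder is routine bookkeeping with the eigenspace decompositions.
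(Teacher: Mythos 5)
Your proof is correct, and while it runs on the same linear-algebra ingredients as the paper's (the decomposition $\g = E(1;\rho)\oplus E(\omega;\rho)\oplus E(\omega^2;\rho)$, the fact that the braid relation forces $\sigma$ to fix $E(1;\rho)$ and swap $E(\omega;\rho)$ with $E(\omega^2;\rho)$, and the characteristic hypotheses), it is packaged genuinely differently. The paper proves the two implications separately by direct evaluation: for the forward direction it substitutes $a \in E(1;\rho)$ into (\ref{eq:triality_Lie}) and uses the braid relation to collapse the sum to $3a - 3\sigma(a)$; for the converse it checks (\ref{eq:triality_Lie}) on each $\rho$-eigenspace, where on $E(\omega;\rho)$ and $E(\omega^2;\rho)$ the sum vanishes identically because both halves of it pick up the factor $1+\omega+\omega^2=0$. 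Your factorization of the triality operator as $(\Id+\rho+\rho^2)\circ(\Id-\sigma)$ replaces this case analysis by a single chain of equivalences: triality $\iff \operatorname{im}(\Id-\sigma)\subseteq \ker(\Id+\rho+\rho^2) \iff E(-1;\sigma)\subseteq E(\omega;\rho)\oplus E(\omega^2;\rho) \iff E(1;\rho)\cap E(-1;\sigma)=0 \iff E(1;\rho)\subseteq E(1;\sigma)$. Your route has two concrete advantages: the intermediate characterization $E(-1;\sigma)\subseteq E(\omega;\rho)\oplus E(\omega^2;\rho)$ is exactly the fact the paper needs again, and re-derives from the lemma, at the start of the proof of Lemma \ref{lem:basic_equation}, so you get that step essentially for free; and you justify why extending scalars to adjoin $\omega$ is harmless, a point the paper handles by silently extending scalars for the whole section even though the lemma's statement does not require $\omega \in F$. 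What the paper's version buys in exchange is brevity, and a forward implication that needs no eigenspace decomposition at all --- just substitution and the braid relation.
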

\begin{proof}
Given $a\in E(1;\rho)$, (\ref{eq:triality_Lie}) implies that $3\sigma(a) - 3a = 0$ so $\sigma(a) = a$.  Conversely, for elements $a\in E(1;\rho)$ condition (\ref{eq:triality_Lie}) follows from our hypothesis, while for elements $a\in E(\omega;\rho)$, we have that $\sum_{\tau\in \Sthree_3} \sig(\tau)\tau(a) = (1+\omega + \omega^2) (a - \sigma(a)) = 0$. Since $\g = E(1;\rho)\oplus E(\omega;\rho)\oplus E(\omega^2;\rho)$ we are done.
\end{proof}

The universal enveloping algebra $U(\g)$ of a Lie algebra $\g$ acts on $\g$ by
\begin{displaymath}
    x\cdot a = \sum x_{(1)}a S(x_{(2)})
\end{displaymath}
where $S$ denotes the antipode. With this notation we have:

\begin{lemma}
\label{lem:basic_equation}
Let $\g$ be a Lie algebra with triality relative to $\rho$ and $\sigma$ over a field of characteristic $\neq 2,3$. Then $U(\g)$ satisfies
\begin{eqnarray}
    \label{eq:action}
    && \ep(x) a - P(x)\cdot \sigma(a) + P(x)\cdot \rho(a) \\
    \nonumber && \quad -\rho^2\sigma(P(x))\cdot \rho\sigma(a)+ \rho^2\sigma(P(x)) \cdot \rho^2(a) - \ep(x)\rho^2(\sigma(a))=0.
\end{eqnarray}
\end{lemma}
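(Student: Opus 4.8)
The statement (\ref{eq:action}) is a polynomial identity in $U(\g)$ acting on $\g$. My strategy is to reduce it, via the linearity of both sides in $x$ and the characterization of $\MH$-elements from Lemma \ref{lem:P}, to a finite, checkable family of cases, and then to exploit the triality relation (\ref{eq:triality_Lie}) together with the eigenspace criterion of Lemma \ref{lem:eigen_one}. The key observation is that the entire left-hand side of (\ref{eq:action}) depends on $x$ only through the scalar $\ep(x)$ and the element $P(x)\in \MH(U(\g))$: indeed $\rho^2\sigma(P(x))$ is just another operator built from $P(x)$, so once I know the identity for all $P(x)$ I know it for all $x$. Thus I plan to prove the identity with $P(x)$ replaced by an arbitrary spanning element of the form $a_n\circ(\cdots(a_2\circ a_1))$ with each $a_i\in E(-1;\sigma)$, as furnished by Lemma \ref{lem:P}, and with $\ep(P(x))=0$ for $n\ge 1$ while the $n=0$ (scalar) case is trivial.

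\textbf{Main steps.} First I would record the base case: for $a_1\in E(-1;\sigma)$, so $\sigma(a_1)=-a_1$, the action $a_1\cdot b = a_1 b - b a_1 = [a_1,b]$ is just the adjoint action, and (\ref{eq:action}) collapses (since $\ep(a_1)=0$) to
\begin{displaymath}
 -[a_1,\sigma(b)] + [a_1,\rho(b)] - [\rho^2\sigma(a_1),\rho\sigma(b)] + [\rho^2\sigma(a_1),\rho^2(b)] = 0.
\end{displaymath}
Using $\rho^2\sigma(a_1)=\sigma\rho(a_1)$ and rewriting each bracket by pulling an automorphism out, this should reduce precisely to the triality alternating-sum condition (\ref{eq:alternate_sum}) applied to the element $b$, or to a consequence of Lemma \ref{lem:eigen_one}; this is where the genuine hypothesis on $\g$ enters. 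Second, I would run the induction on filtration degree exactly paralleling Lemma \ref{lem:P}: writing a spanning element as $a\circ x$ with $a\in E(-1;\sigma)$ and $x$ of lower degree, I would expand both the action $(a\circ x)\cdot b$ and the twisted operator $\rho^2\sigma(a\circ x)$ in terms of $x\cdot(-)$ and $a\cdot(-)$, and substitute the inductive hypothesis for $x$. The algebra here is the derivation-like behaviour of $a\circ(-)$ under the $U(\g)$-action combined with the fact that $\rho$ and $\sigma$ are automorphisms commuting with the coproduct.

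\textbf{The obstacle.} The delicate point is the bookkeeping in the inductive step: the operator $P(x)\cdot(-)$ is \emph{not} a derivation (it is the full adjoint-type action $\sum x_{(1)}(-)S(x_{(2)})$), so expanding $(a\circ x)\cdot b$ produces cross terms mixing $a\cdot(-)$ and $x\cdot(-)$ at various slots of the coproduct, and one must verify that the six signed terms of (\ref{eq:action}) recombine so that the cross terms cancel and the surviving terms are exactly $\ep$ times the degree-$n$ instance. I expect this to require carefully tracking how $\sigma$ and $\rho$ interact with $\circ$ on $E(-1;\sigma)$, using $\sigma(a)=-a$ to flip signs and $\rho^2\sigma=\sigma\rho$ to realign the twisted copy. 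Provided these cancellations go through — and the base case pins them to (\ref{eq:triality_Lie}) — the induction closes and (\ref{eq:action}) follows for all spanning elements, hence for all $P(x)$, hence for all $x\in U(\g)$ by linearity.
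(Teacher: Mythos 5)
Your frame coincides with the paper's: both reduce, via Lemma \ref{lem:P} and linearity (note $\ep(x)=\ep(P(x))$, so the left-hand side of (\ref{eq:action}) is a linear function of the circle products spanning $\set{P(x)\mid x\in U(\g)}$), to proving the identity for $p=a_n\circ(\cdots(a_2\circ a_1))$ with $a_i\in E(-1;\sigma)$, by induction on $n$; and your base case is sound --- for $n=0$ the identity is exactly (\ref{eq:triality_Lie}), and for $n=1$ it reduces (writing the entries as differences of $\rho$-eigenvectors) to $[b',a'']=[b'',a']$, which holds because that bracket lies in $E(1;\rho)\cap\g$ and is therefore $\sigma$-fixed by Lemma \ref{lem:eigen_one}. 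The problem is that everything after the base case, which is the actual content of the lemma, is left as a hope: you yourself flag the inductive step as ``the obstacle'' and assert only that the cancellations should go through. They do not go through in the form you describe, so this is a genuine gap rather than a compressed proof.

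Concretely: for $q=b\circ p$ the adjoint action satisfies $(b\circ p)\cdot c=[b,\,p\cdot c]+p\cdot[b,c]$. The terms $[b,\,p\cdot(\cdot)]$ are indeed absorbed by the inductive hypothesis for $p$, but the cross terms are
\begin{displaymath}
p\cdot[b,\rho(a)]-p\cdot[b,\sigma(a)]=p\cdot\rho\bigl([\rho^2(b),a]\bigr)+p\cdot\sigma\bigl([b,a]\bigr)
\end{displaymath}
(using $\sigma(b)=-b$), together with their $\rho^2\sigma$-twisted companions. The hypothesis (\ref{eq:action}) relates $p\cdot\sigma(c)$, $p\cdot\rho(c)$ and the $\rho^2\sigma(p)$-terms only for one and the same element $c$; here two \emph{different} elements $[\rho^2(b),a]$ and $[b,a]$ occur, so no cancellation can be extracted from the hypothesis alone, however the signed terms are grouped. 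The paper supplies exactly the structure you are missing: after extending scalars to obtain a primitive cube root of unity $\omega$, it projects the identity onto the $\rho$-eigenspaces, reducing it to three conditions; one is disposed of by the $E(1;\rho)\subseteq E(1;\sigma)$ argument, another follows by applying $\sigma$, and the last takes a \emph{parity-dependent} form ($p_0\cdot a'=0$ for $n$ odd, $p_0\cdot a'=-2\,p''\cdot a''$ for $n$ even, where $p_0,p',p''$ are the $\rho$-eigenprojections of $p$, $a=a'-a''$ and $a''=\sigma(a')$). Crucially, the inductive step then invokes the hypothesis not for a lower-degree element but for $[b'+b'',p]$ --- a sum of circle products of the \emph{same} length $n$, admissible because $\ad_{b'+b''}$ is a derivation of $\circ$ preserving $E(-1;\sigma)$ --- and it is this application that kills the offending cross terms; the final step also uses characteristic $\neq 3$, via an identity of the form $3\,q_0\cdot a'=0$. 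None of these devices (eigenspace projection, parity-refined statements, the $[b'+b'',p]$ trick, the division by $3$) appears in your plan, and without them the induction you set up does not close.
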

\begin{proof}
First observe that $\sigma \rho = \rho^2\sigma$ implies that
\begin{displaymath}
    \sigma(E(\omega;\rho)) = E(\omega^2;\rho).
\end{displaymath}
In particular, by Lemma \ref{lem:eigen_one} any element $a\in E(-1;\sigma)$ can be written as $a = a' - a''$ with $a'\in E(\omega;\rho)$ and $a'' = \sigma(a')$. Also observe that by Lemma \ref{lem:P} we may assume that $P(x) = a_n\circ (\cdots (a_2\circ a_1))$ with $a_1,\dots, a_n \in E(-1;\sigma)$. This implies that $\sigma(P(x)) = (-1)^n P(x)$.

By Lemma \ref{lem:eigen_one} the result is obvious if $a\in E(1;\rho)$, so we may assume that $a\in E(\omega;\rho) \cup E(\omega^2;\rho)$. In fact,  we only have to prove (\ref{eq:action}) for $a=a'\in E(\omega;\rho)$ because the case $a\in E(\omega^2;\rho)$ is a consequence of the former by applying $\sigma$ to $(\ref{eq:action})$.

Let us denote $p = a_n\circ (\cdots (a_2\circ a_1))$. If $n = 0$ then $p = 1$ and (\ref{eq:action}) is (\ref{eq:triality_Lie}), so we may assume that $n\geq 1$. Equation (\ref{eq:action}) can be written as
\begin{equation}
    \label{eq:action_simplified}
    - p\cdot a'' + \omega p\cdot a' -(-1)^n\omega^2\rho^2(p)\cdot a''+ (-1)^n \omega^2\rho^2(p) \cdot a' =0
\end{equation}
with $a'' = \sigma(a')$.

To simplify computations we will write $p_0, p', p''$ for the projections of $p$ on $E(1;\rho), E(\omega;\rho), E(\omega^2;\rho)$ where $\rho$ also denotes the extension of $\rho$ to an automorphism of $U(\g)$. Observe that $ \sigma(p)=(-1)^n p$ implies that $\sigma(p_0) = (-1)^n p_0$, $\sigma(p') = (-1)^n p''$ and $\sigma(p'') = (-1)^n p'$.

Plugging $p = p_0 + p' + p''$ into (\ref{eq:action_simplified}) and taking projections onto $E(1;\rho), E(\omega;\rho)$ and $E(\omega^2;\rho)$ we get that (\ref{eq:action_simplified}) is equivalent to
\begin{itemize}
\item[(i)] $-\omega p' \cdot a'' + \omega^2 p'' \cdot a' = -(-1)^n\left( -\omega^2 p'\cdot a'' + \omega p''\cdot a' \right)$,
\item[(ii)] $p_0 \cdot a' - \omega^2 p''\cdot a'' = -(-1)^n \left( \omega p_0\cdot a' - \omega^2 p''\cdot a''\right)$ and
\item[(iii)] $\omega p'\cdot a' - p_0 \cdot a'' = -(-1)^n \left( \omega p'\cdot a' - \omega^2 p_0\cdot a''\right)$.
\end{itemize}
Therefore, we need to prove (i), (ii) and (iii).

In order to prove (i) we notice that $-\omega p' \cdot a'' + \omega^2 p'' \cdot a' \in E(1;\rho)\cap \g$. By Lemma \ref{lem:eigen_one} we get $-\omega p' \cdot a'' + \omega^2 p'' \cdot a' = \sigma(-\omega p' \cdot a'' + \omega^2 p'' \cdot a') = -(-1)^n \omega p''\cdot a' + (-1)^n \omega^2 p'\cdot a''$, as desired.

Equality (iii) follows from (ii) by applying $\sigma$ to it.

We split the proof of (ii) into two cases depending on the parity of $n$. If $n$ is odd then (ii) is equivalent to
\begin{equation}
    p_0 \cdot a' = 0, \tag{odd}
\end{equation}
while in case that $n$ is even,  (ii) is equivalent to
\begin{equation}
    p_0 \cdot a' = - 2 p''\cdot a'' \tag{even}
\end{equation}
We will use induction on $n$. Let us assume that (\ref{eq:action_simplified}) holds for $p$ and consider $q = b\circ p$ with $b = b'-b''\in E(-1;\sigma)$, $b'\in E(\omega;\rho)$ and $b'' = \sigma(b')$. The projections of $q$ on $E(1;\rho), E(\omega;\rho)$ and $E(\omega^2;\rho)$ are
\begin{eqnarray*}
    q_0 &=& b'p'' + p'' b' - b''p' -p'b'', \\
    q' &=& b' p_0 + p_0 b' - b'' p'' - p''b'' \text{ and}\\
    q'' &=& -p_0 b'' - b'' p_0 + b' p' + p' b'
\end{eqnarray*}
respectively.

Let us assume that $n$ is odd. In this case we should prove that $q_0 \cdot a' = - 2 q'' \cdot a''$. First observe that the hypothesis of induction implies that $p_0\cdot a' = 0$. Applying $\sigma$ to both sides of this equality we also obtain that $p_0 \cdot a'' = 0$. Equality (i) implies that $p'\cdot a'' = - p''\cdot a'$. The commutator $[b'',a']$ belongs to $E(1;\rho)$ so, by Lemma \ref{lem:eigen_one}, $[b'',a'] = [b',a'']$. With this information we compute $q_0 \cdot a' + 2 q'' \cdot a''$ as follows
\begin{eqnarray*}
    q_0 \cdot a' + 2 q'' \cdot a'' &=& (b' p'' + p''b' - b'' p'- p'b'')\cdot a' + 2(b' p' + p'b') \cdot a'' \\
    &=& (p'' b' - b'' p')\cdot a' + (b' p' + p'b')\cdot a'' \\
    &=& (p'' b' - b'' p')\cdot a' +(-b' p'' + p'b'')\cdot a'\\
    &=& -([b',p'']+[b'',p'])\cdot a'.
\end{eqnarray*}
Since $[b'+b'',a_i]\in E(-1;\sigma)$ and $[b'+b'',p] = \sum_{i=1}^n a_n\circ(\cdots ( [b'+b'',a_i]\circ(\cdots (a_2\circ a_1))))$ by (even) the action of the projection of $[b'+b'',p]$ on $E(1;\rho)$ kills $a'$. This projection is clearly $[b',p'']+[b'',p']$ so $([b',p'']+[b'',p'])\cdot a'=0$. This proves that $q_0 \cdot a' = - 2 q'' \cdot a''$.

Let us assume now that $n$ is even. In this case we should prove that $q_0 \cdot a' = 0$. The hypothesis of induction and (i), (ii) and (iii) for $p$ imply that $p'\cdot a'' = p''\cdot a'$, $p_0\cdot a' = -2 p''\cdot a''$ and $p_0 \cdot a'' = - 2 p'\cdot a'$. The hypothesis of induction applied to $[b'+b'',p] = ([b',p''] + [b'',p']) + ([b',p_0]+[b'',p'']) + ([b'',p_0] + [b',p'])$ also implies that $([b',p''] + [b'',p'])\cdot a' = - 2([b',p'] + [b'',p_0])\cdot a''$. Hence
\begin{eqnarray*}
    0&=& ([b',p''] + [b'',p'])\cdot a' + 2([b',p'] + [b'',p_0])\cdot a''\\
    &=& 3 b' p'' \cdot a' - p'' b'\cdot a' + b'' p'\cdot a' - p' b''\cdot a' - 2p'b'\cdot a'' - 4 b'' p'\cdot a' + 4 p'' b'\cdot a' \\
    &=& 3 b' p''\cdot a' + 3 p'' b' \cdot a' - 3 b'' p'\cdot a' - 3 p' b''\cdot a'\\
    &=& 3q_0\cdot a'.
\end{eqnarray*}
Since the characteristic of the base field is $\neq 2,3$ then $q_0\cdot a' = 0$. This concludes the proof.
\end{proof}

\begin{theorem}\label {thm:ugtriality}
Let $\g$ be a Lie algebra with triality relative to $\rho$ and $\sigma$  over a field of characteristic $\neq 2,3$. Then $U(\g)$ is a Hopf algebra with triality relative to $\rho$ and $\sigma$.
\end{theorem}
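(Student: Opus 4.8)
The goal is to establish the triality identity \eqref{eq:triality_Hopf}, namely $\sum P(u_{(1)})\rho(P(u_{(2)}))\rho^2(P(u_{(3)})) = \ep(u)1$ for all $u \in U(\g)$, where $P(u) = \sum \sigma(u_{(1)})S(u_{(2)})$. The plan is to reduce this algebra identity about $U(\g)$ to the action identity \eqref{eq:action} already proved in Lemma~\ref{lem:basic_equation}, exploiting the fact that $P(x)$ lives in the Moufang-Hopf part described by Lemma~\ref{lem:P}. First I would record what \eqref{eq:triality_Hopf} says when evaluated on a Lie element $a \in \g$ via the adjoint-type action $x \cdot a = \sum x_{(1)} a S(x_{(2)})$; this is the natural testing device, since $\g$ generates $U(\g)$ and the action is by algebra endomorphisms on the associated graded. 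The key observation is that \eqref{eq:action} is precisely the statement \eqref{eq:triality_Hopf} after it has been \emph{acted} on an arbitrary $a \in \g$, once one rewrites the six-term alternating sum in terms of $P(x)$ and its conjugate $\rho^2\sigma(P(x))$.

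Concretely, I would set $W(u) = \sum P(u_{(1)})\rho(P(u_{(2)}))\rho^2(P(u_{(3)})) - \ep(u)1$ and show $W(u)=0$ by proving that $W(u)$ acts as zero on all of $\g$ and hence on $U(\g)$, from which $W(u)=0$ follows because an element of $U(\g)$ acting trivially by the adjoint action (and with the right counit) must be a scalar that the counit pins down. The central computation is to expand $W(u)\cdot a$ using the derivation/coalgebra compatibility of the action and the multiplicativity $(xy)\cdot a = x\cdot(y\cdot a)$, and to match the resulting terms against the six summands $a,\ \sigma(a),\ \rho(a),\ \rho\sigma(a),\ \rho^2(a),\ \rho^2\sigma(a)$ appearing in \eqref{eq:action}. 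Here I would use the structural facts assembled in the proof of Lemma~\ref{lem:basic_equation}: that $P(x)$ may be taken of the form $a_n \circ(\cdots(a_2\circ a_1))$ with $a_i \in E(-1;\sigma)$, that $\sigma(P(x)) = (-1)^n P(x)$, and that $\sigma\rho = \rho^2\sigma$ forces $\sigma(E(\omega;\rho)) = E(\omega^2;\rho)$. These let me identify $\rho(P(u_{(2)}))$ and $\rho^2(P(u_{(3)}))$ inside the action with the $\rho$- and $\rho^2$-conjugated arguments that \eqref{eq:action} controls.

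The main obstacle I anticipate is the bookkeeping that translates the \emph{three-fold} coproduct $\sum u_{(1)}\otimes u_{(2)}\otimes u_{(3)}$ appearing in \eqref{eq:triality_Hopf} into the \emph{single} $P(x)$ together with its $\rho^2\sigma$-conjugate that appears in \eqref{eq:action}. Because $P$ is not an algebra homomorphism, one cannot naively distribute $P$ across the coproduct; instead I would lean on the multiplicative property $P(xy) = \sum \sigma(x_{(1)}) P(y) S(x_{(2)})$ (the analog of the identity used in Theorem~\ref{thm:product_MH} and in the surjectivity argument for $\iota$) to propagate the claim from generators $a\in\g$ to arbitrary products, arguing by induction on the filtration degree exactly as in Lemma~\ref{lem:P}. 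The delicate point is ensuring the conjugating factors $\sigma(x_{(1)})$ and $S(x_{(2)})$ recombine correctly with the $\rho$-twists; once \eqref{eq:action} is recognized as the infinitesimal form of \eqref{eq:triality_Hopf}, the remaining step is the faithfulness of the adjoint action on $U(\g)$ modulo scalars, which follows from the cocommutativity of $U(\g)$ and the Poincar\'e--Birkhoff--Witt theorem, so that triviality of $W(u)\cdot(-)$ on $\g$ upgrades to $W(u)=0$.
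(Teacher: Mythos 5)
Your proposal has a genuine gap at the very step that is supposed to close the argument: the claim that an element $W\in U(\g)$ which acts as zero under the adjoint action $x\cdot a=\sum x_{(1)}aS(x_{(2)})$ and satisfies $\ep(W)=0$ must be zero. This is false, and neither cocommutativity nor the Poincar\'e--Birkhoff--Witt theorem can repair it: the adjoint representation $U(\g)\to\Endo(U(\g))$ is an algebra homomorphism whose kernel is in general huge. Take $\g$ abelian with the trivial action of $\Sthree_3$; this is a Lie algebra with triality, since the alternating sum (\ref{eq:triality_Lie}) vanishes identically, yet every element of the augmentation ideal of the polynomial algebra $U(\g)$ acts as zero adjointly on $\g$ and on $U(\g)$ (each $x\in\g$ acts by $a\mapsto [x,a]=0$, and the kernel of an algebra map is an ideal). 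So, for instance, $W=xy$ with $x,y\in\g$ acts as zero and has $\ep(W)=0$ but $W\neq 0$. Since Theorem \ref{thm:ugtriality} must cover such $\g$, no proof can pass through ``adjoint-trivial implies scalar''. There is also a misidentification upstream: (\ref{eq:action}) is \emph{not} ``(\ref{eq:triality_Hopf}) acted on $a$''. Acting the element $\sum P(u_{(1)})\rho(P(u_{(2)}))\rho^2(P(u_{(3)}))$ on $a$ produces, by multiplicativity of the action, the composition $P(u_{(1)})\cdot\bigl(\rho(P(u_{(2)}))\cdot(\rho^2(P(u_{(3)}))\cdot a)\bigr)$, which is not the alternating six-term sum of single actions appearing in (\ref{eq:action}).

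The paper's proof needs neither of these moves, and your second thread is in fact the correct mechanism. The paper argues by induction on the filtration degree of $u$, and in the inductive step computes the element $\sum P((ax)_{(1)})\rho(P((ax)_{(2)}))\rho^2(P((ax)_{(3)}))$ itself (not its action on anything) for primitive $a\in\g$, using $P(ax)=\sigma(a)P(x)-P(x)a$ together with two consequences of the induction hypothesis, namely $\sum\rho(P(x_{(1)}))\rho^2(P(x_{(2)}))=S(P(x))$ and $\sum P(x_{(1)})\rho(P(x_{(2)}))=S(\rho^2(P(x)))$. The surviving terms assemble into exactly the six-term expression (\ref{eq:action}) evaluated at $\sigma(a)$, which vanishes by Lemma \ref{lem:basic_equation}. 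In other words, (\ref{eq:action}) is the \emph{obstruction to propagating the triality identity from $x$ to $ax$}, internal to the induction; it is not an operator identity obtained by letting (\ref{eq:triality_Hopf}) act on $\g$, and no faithfulness of any representation is ever invoked. If you discard the detour through the adjoint action and instead run your induction on filtration degree directly on the element $\sum P(u_{(1)})\rho(P(u_{(2)}))\rho^2(P(u_{(3)}))-\ep(u)1$, using the formula for $P(ax)$, you recover the paper's argument.
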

\begin{proof}
We will prove (\ref{eq:triality_Hopf}) by induction on the filtration degree of $x\in U(\g)$. If this degree is $0$ then (\ref{eq:triality_Hopf}) holds trivially.  So, let us assume that (\ref{eq:triality_Hopf}) holds for $x\in U(\g)$ of filtration degree $\leq n$ and let us prove that it also is valid for $ax$ with $a\in \g$. Recall that $P(ax) = \sigma(a) P(x)  - P(x)a$.

Since by induction we have  that $\sum P(x_{(1)})\rho(P(x_{(2)}))\rho^2(P(x_{(3)})) = \ep(x)1$ then  $\sum \rho(P(\xxi)) \rho^2(P(\xxii)) = S(P(x))$ and $\sum P(\xxi)\rho(P(\xxii)) = S(\rho^2(x))$. Hence

\begin{eqnarray*}
   && \hskip -.47 truein  \sum P((ax)_{(1)})\rho(P((ax)_{(2)}))\rho^2(P((ax)_{(3)})) \\
   && \quad = \sum P(a \xxi) \rho(P(\xxii)) \rho^2(P(\xxiii)) + P(\xxi ) \rho(P(a \xxii)) \rho^2(P(\xxiii)) \\
   && \quad\quad + \sum P(\xxi ) \rho(P(\xxii)) \rho^2(P(a \xxiii)) \\
   && \quad = \sum \left(\sigma(a) P(\xxi)  -P(\xxi)a \right) S(P(\xxii)) \\
   && \quad\quad + P(\xxi) \rho(\sigma(a) P(\xxii) - P(\xxii) a)\rho^2(P(\xxiii)) \\
   && \quad\quad + \sum S(\rho^2(P(\xxi))) \rho^2(\sigma(a) P(\xxii) - P(\xxii)a)\\
\end{eqnarray*}
\begin{eqnarray*}
   && \quad = \ep(x) \sigma(a) -  \sum P(\xxi) a S(P(\xxii)) \\
   && \quad\quad  + \sum P(\xxi) \rho\sigma(a) S(P(\xxii)) - \sum S(\rho^2(P(\xxi))) \rho(a) \rho^2(P(\xxii))\\
   && \quad\quad + \sum S(\rho^2(P(\xxi))) \rho^2\sigma(a)\rho^2(P(\xxii)) - \ep(x) \rho^2(a)\\ %
   && \quad = \ep(x)\sigma(a) - P(x)\cdot a + P(x)\cdot \rho\sigma(a) - \rho^2\sigma(P(x)) \cdot \rho(a) \\
   && \quad\quad + \rho^2\sigma(P(x))\cdot \rho^2\sigma(a) - \ep(x) \rho^2(a).
\end{eqnarray*}
Lemma \ref{lem:basic_equation} applied to $\sigma(a)$ then implies
\begin{displaymath}
    \sum P((xa)_{(1)})\rho(P((xa)_{(2)}))\rho^2(P((xa)_{(3)}))  = 0 = \ep(xa)1.
\end{displaymath}
\end{proof}


\section{The universal enveloping algebra of a Malcev algebra}
\label{sec:universal_enveloping_algebra}

In \cite{PeSh04},  the construction of the universal enveloping algebra of a Malcev algebra $\m$ over a field $F$ of characteristic $\neq 2, 3$ begins by attaching a Lie algebra $\Lie(\m)$ to the Malcev algebra $\m$. This Lie algebra is the Lie algebra generated by the symbols $\{ \lambda_a, \rho_a \mid a \in \m\}$ subject to the relations
\begin{equation}
 \label{eq:PeSh_relations}
 \begin{matrix}
    \lambda_{\alpha a + \beta b} = \alpha \lambda_a + \beta \lambda_b \hfill & \rho_{\alpha a + \beta b} = \alpha \rho_a + \beta \rho_b \hfill\\
    [\lambda_a, \lambda_b] = \lambda_{[a,b]} - 2 [\lambda_a,\rho_b] \hfill & [\rho_a,\rho_b] = - \rho_{[a,b]} - 2 [\lambda_a, \rho_b] \hfill\\
    [\lambda_a, \rho_b] = [\rho_a, \lambda_b] \hfill&
 \end{matrix}
\end{equation}
for any $a, b \in \m$, $\alpha, \beta \in F$. With the notation
\begin{displaymath}
 \ad_a = \lambda_a - \rho_a,\quad  T_a = \lambda_a + \rho_a \quad \text{and} \quad D_{a,b} = \ad_{[a,b]} - 3 [\lambda_a, \rho_b]
\end{displaymath}
 it was proved that $\Lie(\m) = \Lie_+ \oplus \Lie_-$ with $\Lie_+ = \spann{\ad_a, D_{a,b} \mid a,b \in \m}$ and $\Lie_- = \spann{T_a \mid a \in \m}$;  the mapping $T_a \mapsto a$ being a linear isomorphism from $\Lie_-$ onto $\m$ \cite{PeSh04}*{Proposition 3.2}. The underlying vector space of $U(\m)$ was the symmetric algebra
 ${\mathcal S}(\Lie_-)$ on $\Lie_-$, where a structure of an $\Lie(\m)$-module was given. It also was observed that $\Lie(\m)$ admits two automorphisms $\zeta, \eta$ determined by
\[
\begin{array}{ll}
 \zeta(\lambda_a) = T_a & \eta(\lambda_a) = -\lambda_a \\
 \zeta(\rho_a) = - \rho_a & \eta(\rho_a) = T_a
\end{array}
\]
With these automorphisms,  the structure of the $\Lie(\m)$-module of ${\mathcal S}(\m)$ ($\Lie_-$ is identified with $\m$) is twisted to get two new modules ${\mathcal S}(\m)_\zeta$, ${\mathcal S}(\m)_\eta$. The product of $U(\m)$ is obtained as  a homomorphism of $\Lie(\m)$-modules ${\mathcal S}(\m)_\zeta \otimes {\mathcal S}(\m)_\eta \rightarrow {\mathcal S}(\m)$ satisfying certain conditions \cite{PeSh04}*{Proposition 3.4}.

\begin{proposition}
 Let $\m$ be a Malcev algebra over a field of characteristic $\neq 2,3$. Then $\Lie(\m)$ is a Lie algebra with triality relative to $\rho = \eta \zeta$ and $\sigma = \zeta \eta \zeta$.
\end{proposition}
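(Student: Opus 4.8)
The plan is to first pin down the two candidate automorphisms on generators, then verify the group relations, and finally reduce the triality identity~(\ref{eq:triality_Lie}) to a short linear check on a vector-space spanning set of $\Lie(\m)$. Composing the defining formulas for $\zeta$ and $\eta$ gives, with $T_a=\lambda_a+\rho_a$,
\[
\rho(\lambda_a)=\rho_a,\quad \rho(\rho_a)=-T_a,\qquad \sigma(\lambda_a)=-\rho_a,\quad \sigma(\rho_a)=-\lambda_a .
\]
Evaluating the words $\sigma^2$, $\rho^3$, $\sigma\rho$, $\rho^2\sigma$ on $\lambda_a$ and $\rho_a$ shows that $\sigma^2=\rho^3=\Id$ and $\sigma\rho=\rho^2\sigma$; since two algebra automorphisms that coincide on a generating set coincide everywhere, these relations hold on all of $\Lie(\m)$, so $\langle\rho,\sigma\rangle$ is a homomorphic image of $\Sthree_3$.

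Next I would invoke the decomposition $\Lie(\m)=\Lie_+\oplus\Lie_-$ of \cite{PeSh04}*{Proposition 3.2}, where $\Lie_-=\spann{T_a}$ and $\Lie_+=\spann{\ad_a,D_{a,b}}$. Since $\ad_a=\lambda_a-\rho_a$ and $T_a=\lambda_a+\rho_a$, the set $\set{\lambda_a,\rho_a,D_{a,b}\mid a,b\in\m}$ spans $\Lie(\m)$ as a vector space. As~(\ref{eq:triality_Lie}) is linear in its argument, it suffices to verify it on this spanning set. For $a=\lambda_c$ and $a=\rho_c$ I would substitute the formulas above into the alternating sum $\sum_{\tau\in\Sthree_3}\sig(\tau)\tau(a)$ of~(\ref{eq:alternate_sum}); each is a six-term computation in which the $\lambda_c$- and $\rho_c$-contributions cancel separately.

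The generators $D_{a,b}$ are handled by showing that they are fixed by both $\rho$ and $\sigma$, after which~(\ref{eq:triality_Lie}) collapses to $D_{a,b}(1-1+1-1+1-1)=0$. That $\sigma(D_{a,b})=D_{a,b}$ is immediate from $\sigma(\ad_c)=\ad_c$ and $\sigma([\lambda_a,\rho_b])=[\rho_a,\lambda_b]=[\lambda_a,\rho_b]$, the last equality being the relation $[\lambda_a,\rho_b]=[\rho_a,\lambda_b]$ in~(\ref{eq:PeSh_relations}). The identity $\rho(D_{a,b})=D_{a,b}$ is the genuinely delicate point and I expect it to be the main obstacle: expanding $\rho(\ad_{[a,b]})$ and $\rho([\lambda_a,\rho_b])$ forces one to use all the relations in~(\ref{eq:PeSh_relations}), notably $[\rho_a,\rho_b]=-\rho_{[a,b]}-2[\lambda_a,\rho_b]$, and then to track the cancellation of the $T_{[a,b]}$-terms against $3\rho_{[a,b]}$.

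As a conceptual check (and an alternative to the last two steps), I would use Lemma~\ref{lem:eigen_one}. The formulas for $\sigma$ give $\sigma(\ad_a)=\ad_a$ and $\sigma(T_a)=-T_a$, so $E(1;\sigma)=\Lie_+$; and on the $\rho$-stable subspace $W=\spann{T_a,\ad_a}$ the operator $\rho$ is given in the basis $(T_a,\ad_a)$ by a matrix of characteristic polynomial $\xi^2+\xi+1$, which (as $\operatorname{char}F\neq3$, so $1$ is not a root) has eigenvalues $\omega,\omega^2$ and no fixed vectors. Combined with $\rho(D_{a,b})=D_{a,b}$ and $\Lie(\m)=W+\spann{D_{a,b}}$, this yields $E(1;\rho)\subseteq\Lie_+=E(1;\sigma)$, which is exactly the criterion of Lemma~\ref{lem:eigen_one}.
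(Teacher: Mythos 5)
Your computation of $\rho$ and $\sigma$ on the generators, the verification of the $\Sthree_3$-relations, and the six-term checks of (\ref{eq:triality_Lie}) for $\lambda_c$ and $\rho_c$ are all correct. The gap is in your treatment of the $D_{a,b}$: \emph{both} of your routes --- the ``collapse'' argument and the alternative via Lemma \ref{lem:eigen_one} --- require the identity $\rho(D_{a,b})=D_{a,b}$, and you never prove it; you explicitly defer it as ``the genuinely delicate point'' and ``the main obstacle.'' As written, the proof is therefore incomplete at precisely the step you flagged as hardest. (The identity is true and not actually long: $\rho(D_{a,b})=\rho_{[a,b]}+T_{[a,b]}+3[\rho_a,\lambda_b]+3[\rho_a,\rho_b]$, and substituting $[\rho_a,\lambda_b]=[\lambda_a,\rho_b]$ and $[\rho_a,\rho_b]=-\rho_{[a,b]}-2[\lambda_a,\rho_b]$ from (\ref{eq:PeSh_relations}) returns $\ad_{[a,b]}-3[\lambda_a,\rho_b]=D_{a,b}$. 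But you did not carry this out, and your second route has the additional looseness that $W=\spann{T_a,\ad_a}$ is treated as a single two-dimensional $\rho$-stable space, whereas one needs the statement for the span over \emph{all} $a\in\m$.)

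The missing idea --- which is the paper's entire proof --- is that the alternating sum in (\ref{eq:triality_Lie}) factors:
\begin{displaymath}
a-\sigma(a)+\rho(a)-\rho\sigma(a)+\rho^2(a)-\rho^2\sigma(a)
=(\Id+\rho+\rho^2)\bigl(a-\sigma(a)\bigr).
\end{displaymath}
Hence any element fixed by $\sigma$ satisfies the triality condition automatically, with no information whatsoever about how $\rho$ acts on it. You already proved $\sigma(\ad_a)=\ad_a$ and $\sigma(D_{a,b})=D_{a,b}$ (the latter using $[\lambda_a,\rho_b]=[\rho_a,\lambda_b]$), so by this factorization the identity holds on all of $\Lie_+$, and it only remains to check it on $\Lie_-=\spann{T_a\mid a\in\m}$: there $\sigma(T_a)=-T_a$ and $(\Id+\rho+\rho^2)(T_a)=T_a-\lambda_a-\rho_a=0$, so the sum vanishes. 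This observation eliminates exactly the computation you identified as the main obstacle, and it is how the paper disposes of $\ad_a$ and $D_{a,b}$ in one sentence before checking the (trivial) case of the $T_a$.
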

\begin{proof}
The automorphisms $\rho, \sigma$ are determined by their action on the generators
 \[
\begin{array}{ll}
 \sigma(\lambda_a) = - \rho_a & \rho(\lambda_a) = \rho_a \\
 \sigma(\rho_a) = - \lambda_a & \rho(\rho_a) = - T_a
\end{array}
\]
Thus they clearly satisfy $\sigma^2 = \Id_{\Lie(\m)} = \rho^3$ and $\sigma \rho = \rho^2 \sigma$.  Because of (\ref{eq:PeSh_relations}), the elements $\ad_a, D_{a,b}$ are fixed by $\sigma$ so it suffices to check (\ref{eq:triality_Lie}) for elements $T_a$, but this is obvious.
\end{proof}

By Theorem \ref{thm:ugtriality}, the universal enveloping algebra $U(\Lie(\m))$ of $\Lie(\m)$ is then a Hopf algebra with triality,  so we can consider the Moufang-Hopf algebra $\MH(U(\Lie(\m)))$.

\begin{lemma}
\label{lem:aux1}
 For any $a \in \m$ and $u \in \MH(U(\Lie(\m)))$ the elements  $T_a * u + u * T_a \in \MH(U(\Lie(\m)))$ and $T_a u + u T_a \in U(\Lie(\m))$ coincide.
\end{lemma}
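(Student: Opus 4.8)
The plan is to prove the identity by a direct computation that exploits the fact that $T_a$ is a primitive element of $U(\Lie(\m))$. First I would record that $T_a$ genuinely lies in $\MH(U(\Lie(\m)))$: since $\sigma(T_a) = -(\lambda_a + \rho_a) = -T_a$, the element $T_a$ belongs to $E(-1;\sigma)$, and Lemma \ref{lem:P} (applied with $n=1$) shows $E(-1;\sigma) \subseteq \MH(U(\Lie(\m)))$. Hence both $T_a$ and $u$ lie in $\MH(U(\Lie(\m)))$, the products $T_a * u$ and $u * T_a$ are defined, and since $\MH$ is closed under $*$ the sum $T_a*u + u*T_a$ automatically belongs to $\MH(U(\Lie(\m)))$. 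The real content of the lemma is then to identify this element explicitly inside $U(\Lie(\m))$.

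Next I would compute the images of $T_a$ under the relevant automorphisms and the antipode. Using the formulas for $\rho$ and $\sigma$ on the generators from the preceding Proposition, one gets $\rho(T_a) = \rho(\lambda_a + \rho_a) = \rho_a - T_a = -\lambda_a$ and hence $\rho^2(T_a) = -\rho_a$, while $S(T_a) = -T_a$ because $T_a$ is primitive.

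The key simplification is primitivity: $\Delta(T_a) = T_a \otimes 1 + 1 \otimes T_a$, so each Sweedler sum appearing in the definition of $*$ collapses to just two terms. I would therefore evaluate $T_a * u$ by means of the first expression $u*v = \sum \rho^2(S(u_{(1)})) v \rho(S(u_{(2)}))$ and $u * T_a$ by means of the second expression $u*v = \sum \rho(S(v_{(1)})) u \rho^2(S(v_{(2)}))$, in each case arranged so that it is the coproduct of $T_a$, rather than that of the arbitrary element $u$, which is used. Substituting $S(T_a)=-T_a$, $\rho(T_a)=-\lambda_a$, and $\rho^2(T_a)=-\rho_a$ gives
\[
    T_a * u = \rho_a\, u + u\, \lambda_a, \qquad u * T_a = \lambda_a\, u + u\, \rho_a,
\]
and adding these yields $T_a*u + u*T_a = (\lambda_a+\rho_a)u + u(\lambda_a+\rho_a) = T_a u + u T_a$, which is the assertion.

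I do not expect a serious obstacle in this argument; the only genuine point of care is the choice of which of the two equivalent forms of the product $*$ from Lemma \ref{lem:commutation} to apply to each of the two products, arranged so that the primitivity of $T_a$ does all the work and no nontrivial coproduct of $u$ survives. Everything else is routine bookkeeping with the action of $\rho$, $\rho^2$, and $S$ on the primitive element $T_a$.
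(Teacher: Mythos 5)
Your proof is correct and is essentially identical to the paper's: both arguments exploit the primitivity of $T_a$ and apply the two equivalent forms of the product from Theorem \ref{thm:product_MH} so that the Sweedler sums collapse on $T_a$, yielding $T_a * u = \rho_a u + u\lambda_a$ and $u * T_a = \lambda_a u + u\rho_a$, which add to $T_a u + u T_a$. Your preliminary check that $T_a \in E(-1;\sigma) \subseteq \MH(U(\Lie(\m)))$ is a small explicit addition that the paper leaves implicit (it is used later, in the proof of Theorem \ref{thm:main}, via Lemma \ref{lem:P}), but it does not change the argument.
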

\begin{proof}
We use both formulas for the product on $\MH(\Lie(\m))$ in Theorem \ref{thm:product_MH}. On the one hand $T_a * u = \rho^2(S(T_a))u + u\rho(S(T_a)) = \rho_a u + u \lambda_a$. On the other hand, $u*T_a = \rho(S(T_a)) u + u \rho^2(S(T_a)) = \lambda_a u + u \rho_a$. Thus  $T_a* u + u*T_a = T_a u + u T_a$.
\end{proof}

\begin{theorem}
\label{thm:main}
Let $\m$ be a Malcev algebra over a field of characteristic $\neq 2,3$. Then $U(\m)$ is isomorphic to $\MH(U(\Lie(\m)))$.
\end{theorem}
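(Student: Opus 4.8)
The plan is to turn the linear isomorphism $\m\cong\Lie_-$, $a\leftrightarrow T_a$, into a morphism of Moufang-Hopf algebras via the universal property of $U(\m)$, and to match its image against the spanning set for $\MH(U(\Lie(\m)))$ coming from Lemma~\ref{lem:P}. First I would record that, for the $\sigma$ of the preceding proposition, $\sigma(T_a)=-T_a$ whereas $\sigma$ fixes $\Lie_+=\spann{\ad_a,D_{a,b}}$; since $\Lie(\m)=\Lie_+\oplus\Lie_-$ this gives $E(-1;\sigma)=\Lie_-$ inside $\Lie(\m)$. Lemma~\ref{lem:P} then tells us that $\MH(U(\Lie(\m)))$ is spanned by the \emph{associative} circle products $T_{a_n}\circ(\cdots(T_{a_2}\circ T_{a_1}))$ with $a_i\in\m$, where $u\circ v=uv+vu$ in $U(\Lie(\m))$.

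Next I would set up the embedding. Each $T_a$ is primitive in $\MH(U(\Lie(\m)))$, hence lies in the generalized alternative nucleus. To find the induced Malcev bracket I would use the formulas of Theorem~\ref{thm:product_MH}: from $\rho(T_a)=-\lambda_a$ and $\rho^2(T_a)=-\rho_a$ one gets $T_a*T_b=\rho_aT_b+T_b\lambda_a$, so $[T_a,T_b]_*=[\rho_a,\rho_b]-[\lambda_a,\lambda_b]$, and the relations (\ref{eq:PeSh_relations}) collapse this to $-T_{[a,b]}$. Consequently $a\mapsto -T_a$ is a homomorphism of Malcev algebras into $\Nalt(\MH(U(\Lie(\m))))$, and by the universal property of $U(\m)$ from \cite{PeSh04} it extends to an algebra homomorphism $\Phi\colon U(\m)\to\MH(U(\Lie(\m)))$ for the product $*$. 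Because $U(\m)$ is generated as an algebra by its primitive elements $\m$ and $\Phi$ sends them to primitives, the two composites $\Delta\Phi$ and $(\Phi\otimes\Phi)\Delta$ (both algebra maps) agree on generators, and likewise $\Phi$ intertwines $\ep$ and $S$; hence $\Phi$ is a morphism of Moufang-Hopf algebras.

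The surjectivity is where Lemma~\ref{lem:aux1} is decisive. From $\Phi(a)=-T_a$ and $\Phi(a\cdot x+x\cdot a)=-(T_a*\Phi(x)+\Phi(x)*T_a)=-(T_a\Phi(x)+\Phi(x)T_a)$, an easy induction shows that $\Phi$ of an $n$-fold anticommutator of elements of $\m$ equals $\pm\,T_{a_n}\circ(\cdots(T_{a_1}))$; by the first paragraph these exhaust a spanning set, so $\Phi$ is onto. For injectivity I would pass to associated graded algebras for the PBW filtrations: $\gr U(\m)\cong\mathcal{S}(\m)$ (the symmetric algebra underlying $U(\m)$ in \cite{PeSh04}) and $\gr U(\Lie(\m))\cong\mathcal{S}(\Lie(\m))$, while the leading term of $T_{a_n}\circ(\cdots(T_{a_1}))$ in $\mathcal{S}(\Lie(\m))$ is $2^{n-1}T_{a_n}\cdots T_{a_1}$. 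Thus $\gr\Phi$ is the algebra map $\mathcal{S}(\m)\to\mathcal{S}(\Lie_-)$ induced by the isomorphism $\m\cong\Lie_-$, which is bijective; so $\Phi$ is an isomorphism of filtered spaces and therefore of Moufang-Hopf algebras.

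I expect the main obstacle to be not the bracket computation nor the graded dimension count, but the bookkeeping that legitimizes the universal property and confirms $\Phi$ is a \emph{coalgebra} (not merely algebra) morphism. The genuinely load-bearing step is the invocation of Lemma~\ref{lem:aux1}: it is exactly what converts the $*$-anticommutators produced by $\Phi$ into the ordinary associative circle products through which Lemma~\ref{lem:P} describes $\MH(U(\Lie(\m)))$, and without it the surjectivity argument would not close.
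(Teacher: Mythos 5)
Your proposal is correct and takes essentially the same route as the paper: the same spanning set for $\MH(U(\Lie(\m)))$ obtained from Lemmas \ref{lem:P} and \ref{lem:aux1}, the same computation $T_a*T_b - T_b*T_a = -T_{[a,b]}$ feeding the universal property of $U(\m)$ to produce $a \mapsto -T_a$, the same use of Lemma \ref{lem:aux1} for surjectivity, and the same PBW-based injectivity. Your passage to associated graded algebras with the $2^{n-1}$ leading coefficients is merely a rephrasing of the paper's observation that the anticommutator PBW basis of $U(\m)$ is sent to a linearly independent set in $U(\Lie(\m))$.
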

\begin{proof}
 Lemma \ref{lem:P} and Lemma \ref{lem:aux1} imply that $\MH(U(\Lie(\m)))$ is spanned by elements of the form $T_{a_n} \bullet (\cdots (T_{a_2}\bullet T_{a_1}))$ with $a_1,\dots, a_m \in \m$, $n \in \mathbb{N}$ and $T_a \bullet u = T_a * u + u * T_a$. The elements $T_a$ are primitive inside the Moufang-Hopf algebra $\MH(U(\Lie(\m)))$, so they belong to the generalized alternative nucleus. The commutator of two of them in $\MH(U(\Lie(\m)))$ is easily computed:
\begin{eqnarray*}
 T_a*T_b &=& \rho^2(S(T_a))T_b + T_b \rho(S(T_a)) = \rho_a T_b + T_b \lambda_a,\\
T_a*T_b - T_b*T_a &=& \rho_a\rho_b + \lambda_b \lambda_a - \rho_b\rho_a - \lambda_a\lambda_b = [\rho_a, \rho_b] - [\lambda_a,\lambda_b] = - T_{[a,b]}
\end{eqnarray*}
where the last equality follows from relations (\ref{eq:PeSh_relations}). By the universal property of $U(\m)$ in \cite{PeSh04},  we can conclude that the map $a \mapsto -T_a$ extends to a homomorphism of unital algebras $\varphi \colon U(\m) \rightarrow \MH(U(\Lie(\m)))$. In fact, since $\MH(U(\Lie(\m))) = \spann{T_{a_n} \bullet (\cdots (T_{a_2} \bullet T_{a_1})) \mid a_1, \dots, a_n \in \m, n \in \mathbb{N}}$,  then $\varphi$ is surjective. To prove that $\varphi$ is an isomorphism we appeal to the Poincar\'e-Birkhoff-Witt Theorem for $U(\m)$ and $U(\Lie(\m))$.
Given a totally ordered basis $\{ a_i \}_{i\in \Lambda}$ of $\m$, $U(\m)$ admits a basis $\{ a_{i_n} \bullet (\cdots (a_{i_2} \bullet a_{i_1}))    \mid i_1 \leq \cdots \leq i_n, n\in \mathbb{N} \}$ with $a \bullet x = a x + x a$ in $U(\m)$ \cite{PeSh04}. This basis is sent to $\{ T_{a_{i_n}} \circ (\cdots (T_{a_{i_2}} \circ T_{a_{i_1}}))    \mid i_1 \leq \cdots \leq i_n, n\in \mathbb{N} \}$, a linearly independent set in $U(\Lie(\m))$. Since $\varphi$ sends a basis to a linearly independent set, then $\varphi$ is also injective.
\end{proof}

\begin{theorem}
 Let $\m$ be a Malcev algebra over a field of characteristic $\neq 2,3$.
Then $\Doro(U(\m))$ is isomorphic to $U(\Lie(\m))$.
\end{theorem}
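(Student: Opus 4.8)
The plan is to identify the desired isomorphism with the map $\bar\varphi$ produced by the universal property of $\Doro$, and then to exhibit an explicit two-sided inverse coming from the universal property of the enveloping algebra.

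By Theorem~\ref{thm:main} (and its proof) the assignment $a\mapsto -T_a$ extends to an isomorphism of Moufang-Hopf algebras $\varphi\colon U(\m)\to \MH(U(\Lie(\m)))$, and by the preceding Proposition together with Theorem~\ref{thm:ugtriality} the enveloping algebra $U(\Lie(\m))$ is a Hopf algebra with triality. The universal property of $\Doro(U(\m))$ therefore yields a homomorphism of Hopf algebras with triality $\bar\varphi\colon \Doro(U(\m))\to U(\Lie(\m))$ with $\bar\varphi(P_m)=\varphi(m)$, $\bar\varphi(L_m)=\rho(\varphi(m))$ and $\bar\varphi(R_m)=\rho^2(\varphi(m))$. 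Using the explicit description of $\rho$ from the Proposition one computes $\bar\varphi(L_a)=\lambda_a$, $\bar\varphi(R_a)=\rho_a$ and $\bar\varphi(P_a)=-T_a$ for $a\in\m$. In particular the image of $\bar\varphi$ contains all $\lambda_a,\rho_a$, which generate $U(\Lie(\m))$, so $\bar\varphi$ is surjective.

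To invert $\bar\varphi$ I would define $\bar\psi\colon U(\Lie(\m))\to \Doro(U(\m))$ as the extension of $\lambda_a\mapsto L_a$, $\rho_a\mapsto R_a$. For this to be a well-defined homomorphism of associative algebras it suffices, by the definition of $\Lie(\m)$, to verify that the elements $L_a,R_a\in\Doro(U(\m))$ satisfy the defining relations \eqref{eq:PeSh_relations} under commutator; this verification is the crux of the proof. First, the relation $\sum P_\mmi L_\mmii R_\mmiii=\ep(m)1$ applied to a primitive $a$ gives $P_a=-(L_a+R_a)$. Specializing the defining relations of $\Doro(U(\m))$ coming from parts iv) and v) of Lemma~\ref{lem:multiplication_algebra} to primitive $a,b$, and using that $[a,b]=ab-ba$ in $U(\m)$, one obtains $L_{[a,b]}=[L_a,L_b]+2[R_a,L_b]$, $R_{[a,b]}=-2[L_a,R_b]-[R_a,R_b]$ and $P_{[a,b]}=[L_a-R_a,P_b]$. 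Comparing the last identity with $P_{[a,b]}=-(L_{[a,b]}+R_{[a,b]})$ and substituting $P_b=-(L_b+R_b)$ yields $3[L_a,R_b]=3[R_a,L_b]$, whence $[L_a,R_b]=[R_a,L_b]$ because the characteristic is $\neq 3$; this is the third relation in \eqref{eq:PeSh_relations}. Feeding it back into the expressions for $L_{[a,b]}$ and $R_{[a,b]}$ gives the first two relations. I expect this computation to be the main obstacle; the remaining steps are formal.

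Finally I would check that $\bar\psi$ and $\bar\varphi$ are mutually inverse. On generators one has $\bar\psi\bar\varphi(L_a)=L_a$, $\bar\psi\bar\varphi(R_a)=R_a$, $\bar\psi\bar\varphi(P_a)=\bar\psi(-T_a)=-(L_a+R_a)=P_a$, and $\bar\varphi\bar\psi(\lambda_a)=\lambda_a$, $\bar\varphi\bar\psi(\rho_a)=\rho_a$; since $\lambda_a,\rho_a$ generate $U(\Lie(\m))$ this already gives $\bar\varphi\bar\psi=\Id$. For $\bar\psi\bar\varphi=\Id$ one observes that $\bar\psi$ commutes with $\rho$ and $\sigma$ (immediate on generators), so $\bar\psi\bar\varphi$ is an endomorphism of the Hopf algebra with triality $\Doro(U(\m))$ fixing every $P_a$; restricting to $\MH(\Doro(U(\m)))$ and invoking the isomorphism $\iota\colon U(\m)\to\MH(\Doro(U(\m)))$, under which $\MH(\Doro(U(\m)))$ is generated as a Moufang-Hopf algebra by $\{P_a\mid a\in\m\}$, shows that $\bar\psi\bar\varphi$ fixes every $P_m$, and equivariance under $\rho$ then forces it to fix $L_m=\rho(P_m)$ and $R_m=\rho^2(P_m)$ as well. (Alternatively, an induction on filtration degree using parts iv) and v) of Lemma~\ref{lem:multiplication_algebra} shows directly that $\{L_a,R_a,P_a\mid a\in\m\}$ generate $\Doro(U(\m))$.) Hence $\bar\psi\bar\varphi=\Id$ and $\bar\varphi$ is the required isomorphism.
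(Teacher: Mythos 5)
Your proposal is correct and follows essentially the same route as the paper: the universal property of $\Doro(U(\m))$ supplies the surjection $\bar\varphi$, the defining relations of $\Doro(U(\m))$ specialized to primitive elements (together with $P_a=-(L_a+R_a)$) reproduce the relations \eqref{eq:PeSh_relations} for $\{L_a,R_a\}$, giving the inverse homomorphism $\lambda_a\mapsto L_a$, $\rho_a\mapsto R_a$. The only cosmetic differences are that the paper reaches $[L_a,R_b]=[R_a,L_b]$ more directly, by equating the two expressions $P_aL_b+L_bR_a$ and $R_bL_a+L_aP_b$ for $-L_{ab}$ (so no division by $3$ is needed at that step), and that you spell out the mutual-inverse verification which the paper, having already noted that $\{L_a,R_a\mid a\in\m\}$ generate $\Doro(U(\m))$, dismisses as clear.
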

\begin{proof}
 The isomorphism $\varphi\colon U(\m) \rightarrow \MH(U(\Lie(\m)))$ in the proof of Theorem \ref{thm:main} sends $a \in \m$ to $-T_a$. By the universal property of $\Doro(U(\m))$ we obtain a homomorphism $\bar{\varphi} \colon \Doro(U(\m)) \rightarrow U(\Lie(\m))$ of Hopf algebras with triality that sends $P_a$ to $-T_a$. The relations that define $\Doro(U(\m))$ imply that most of the generators $\{ P_m, L_m, R_m \mid m \in U(\m) \}$ are superfluous. In fact, since $U(\m)$ is generated by $\m$, then $\Doro(U(\m))$ is generated by $\{ L_a, R_a \mid a \in \m \}$. The images of these generators under $\bar{\varphi}$ are $\bar{\varphi}(L_a) = \bar{\varphi}(\rho(P_a)) =  \rho(\bar{\varphi}(P_a)) = - \rho(T_a) = \lambda_a$ and $\bar{\varphi}(R_a) = \rho_a$.

The identities that define $\Doro(U(\m))$ also imply that
\begin{eqnarray*}
 -L_{ab} &=& P_aL_b + L_bR_a = -L_aL_b - [R_a,L_b]\\
 -L_{ab} &=& R_bL_a + L_aP_b = -L_aL_b - [L_a,R_b]
\end{eqnarray*}
so $[L_a,R_b] = [R_a,L_b]$ and $[L_a,L_b] = L_{[a,b]} - 2 [L_a,R_b]$. In a similar way $[R_a,R_b] = - R_{[a,b]} - 2 [L_a,R_b]$. This proves that the generators $\{ L_a, R_a \mid a \in \m\}$ satisfy similar relations to those in (\ref{eq:PeSh_relations}). Therefore, there exists a homomorphism of Hopf algebras $U(\Lie(\m)) \rightarrow \Doro(U(\m))$ that sends $\lambda_a$ to $L_a$ and $\rho_a$ to $R_a$. This homomorphism is clearly the inverse of $\bar{\varphi}$, so $\bar{\varphi}$ is an isomorphism of Hopf algebras.
\end{proof}


\section{Moufang loops from coalgebras morphisms}
\label{sec:other_constructions}

Let $(\C,\Delta, \ep)$ be a cocommutative coalgebra and $U$ a Moufang-Hopf algebra  that will remain fixed throughout this section. It is known \cite{Pe07} that the set of coalgebra morphisms $\morco(\C,U)$ from $\C$ to $U$ is a Moufang loop with the convolution product $f*g$ given by
\begin{displaymath}
c(f*g) = \sum(c_{(1)}f)(c_{(2)}g),
\end{displaymath}
the unit element being $c \mapsto \ep(c) 1$. (Recall, here we have reverted to writing operators on the right as in Section 2.) This is a nonassociative analog of the fact that for any Hopf algebra $H$,  $\morco(\C,H)$ is a group under the convolution.

In the vector space $\Hom(\C,\Endo(U))$ we may also define a convolution product
\begin{displaymath}
    (A*B)_c = \sum A_{c_{(1)}}B_{c_{(2)}}
\end{displaymath}
for which $\Hom(\C,\Endo(U))$ is an associative algebra with identity $c \mapsto \ep(c) \Id_U$. The notation for the image $A_c$ of $c$ under $A$ is consistent  with the notation $L_x$, $R_x$ for the multiplication operators on $U$ that we can interpret as elements $L,R \in \Hom(U, \Endo(U))$. Consider the elements $A \in \Hom(\C,\Endo(U))$ such that
\begin{enumerate}
    \item $A$ is invertible,
    \item $\ep(y A_x) = \ep(y) \ep(x)$ and
    \item $\Delta(y A_x) = \sum y_{(1)} A_{x_{(1)}} \otimes y_{(2)} A_{x_{(2)}}$
\end{enumerate}
and call $G = G(\C,U)$ the set of all of them. Clearly $G$ is a group. For instance, if $\C = U$ then $L \colon x \mapsto L_x$, $R\colon x \mapsto R_x$ belong to $G(U,U)$. In fact,
\begin{eqnarray*}
    & \sum yL_\xxi L_{\xxii S} = \ep(x) y =   \sum y L_{\xxi S} L_\xxii & \text{and}\\
    & \sum yR_\xxi R_{\xxii S} = \ep(x)y =  \sum y R_{\xxi S} R_\xxii &
\end{eqnarray*}
show that $L$ and $R$ are invertible in $\Hom(U,\Endo(U))$ with inverses
\begin{displaymath}
    L^{-1}\colon x \mapsto L_{xS} \quad\text{and}\quad R^{-1}\colon x \mapsto R_{xS}.
\end{displaymath}
The map $U = L*R \colon x \mapsto \sum L_\xxi R_\xxii$ also belongs to $G(U,U)$.

Let us define now
\begin{displaymath}
    \Atp_\C(U) = \{ (A,B,C) \in G^3 \mid (xy) A_{c} = \sum (xB_{c_{(1)}}) (yC_{c_{(2)}}) \quad \forall_{x,y \in U,\, c\in \C}\}.
\end{displaymath}
To justify our notation observe that  when $U = FQ$ is the loop algebra of a Moufang loop $Q$ and $\C = Fe$ is the one-dimensional coalgebra with $\Delta(e) = e \otimes e$ and $\ep(e) = 1$, then $G$ can be identified with $\Bij(Q)$ and $\Atp_\C(U)$ with $\Atp(Q)$.

The goal of this section is to prove that, in analogy with results in Section \ref{sec:groups_with_triality}, $\Atp_\C(U)$ is a group with triality for which $\M(\Atp_\C(U)) = \morco(\C,U)$.

\begin{proposition}
$\Atp_\C(U)$ is a group under  the componentwise product.
\end{proposition}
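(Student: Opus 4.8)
The plan is to show that $\Atp_\C(U)$ is closed under the componentwise convolution product and under inversion, since it is visibly a subset of the group $G^3$, which we already know is a group with the componentwise operation. The identity element of $G^3$ is $(\mathrm{e},\mathrm{e},\mathrm{e})$ with $\mathrm{e}_c = \ep(c)\Id_U$, and this clearly satisfies the autotopy condition because $(xy)\mathrm{e}_c = \ep(c)(xy) = \sum (x\,\mathrm{e}_{c_{(1)}})(y\,\mathrm{e}_{c_{(2)}})$ using $\ep(c_{(1)})\ep(c_{(2)}) = \ep(c)$ and counit. So the real content is closure under product and inverse.

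For closure under products, I would take $(A,B,C)$ and $(A',B',C')$ in $\Atp_\C(U)$ and verify that $(A*A',B*B',C*C')$ again satisfies the defining identity. Writing out $(xy)(A*A')_c = \sum (xy)A_{c_{(1)}}A'_{c_{(2)}}$, the strategy is to apply the autotopy relation for $(A,B,C)$ first to rewrite $(xy)A_{c_{(1)}}$ as $\sum (xB_{c_{(1)}})(yC_{c_{(2)}})$, then apply the relation for $(A',B',C')$ to the resulting product under the action of $A'_{c_{(3)}}$. The key technical point making this go through is property (3), the comultiplicativity of the operators in $G$: it ensures that when $A'_{c}$ acts on a product $(uB'\cdots)(vC'\cdots)$ one can split it coassociatively across the two factors, matching the Sweedler indices correctly. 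Cocommutativity of $\C$ and coassociativity are used freely to reindex the $c_{(i)}$ sums so that the $B$-legs collect onto the first factor and the $C$-legs onto the second.

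For closure under inversion, given $(A,B,C) \in \Atp_\C(U)$ with componentwise inverses $A^{-1},B^{-1},C^{-1}$ in $G$, I would show $(A^{-1},B^{-1},C^{-1})$ is an autotopy. The clean way is to start from the autotopy identity for $(A,B,C)$ applied with $x$ replaced by $xB^{-1}_{?}$ and $y$ by $yC^{-1}_{?}$, and then convolve with $A^{-1}$ to peel off $A$, using $A*A^{-1} = \mathrm{e}$ together with properties (2) and (3) for the inverse operators (which hold because $G$ is closed under inversion). Equivalently, one verifies the identity $\sum (x\,y)A^{-1}_c$ equals $\sum (xB^{-1}_{c_{(1)}})(yC^{-1}_{c_{(2)}})$ by substituting and collapsing the $A*A^{-1}$ and $B*B^{-1}$, $C*C^{-1}$ telescopes.

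The main obstacle I anticipate is purely bookkeeping: keeping the Sweedler indices aligned through several applications of coassociativity and cocommutativity, especially in the product case where one must interleave the comultiplication of $c$ coming from the convolution with the comultiplication forced by property (3) when $A'$ acts on a product. There is no conceptual difficulty—every step is forced—but one must be careful that property (3) is stated for the action on a single argument and check that it propagates correctly to products via the autotopy relation itself. I would organize the computation so that property (3) is invoked exactly once per operator, immediately converting an operator acting on a product into a product of operators acting on the two factors, which keeps the index juggling manageable.
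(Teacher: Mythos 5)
Your proposal is correct and takes essentially the same approach as the paper: the paper likewise treats closure under products as routine and proves closure under inversion by exactly the telescoping device you describe, computing $\sum \bigl(\bigl((xB^{-1}_{c_{(1)}})(yC^{-1}_{c_{(2)}})\bigr)A_{c_{(3)}}\bigr)A^{-1}_{c_{(4)}}$ in two ways and collapsing the $A*A^{-1}$ telescope on one side and the $B^{-1}*B$, $C^{-1}*C$ telescopes (after one application of the autotopy identity) on the other. One minor inaccuracy in your writeup: property (3) is not what lets an operator split across a product --- that splitting is precisely the defining autotopy relation of the second triple, as your closing remark half-acknowledges --- so the product-closure computation needs only the two autotopy identities together with coassociativity and cocommutativity of $\C$, with property (3) entering only in the background claim that $G(\C,U)$ is a group.
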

\begin{proof}
It is easy to check that $\Atp_\C(U)$ is closed under products, so we only have to prove that $(A^{-1}, B^{-1}, C^{-1}) \in \Atp_\C(U)$ whenever $(A, B, C) \in \Atp_\C(U)$. Let $(A, B, C) \in \Atp_\C(U)$. On the one hand,
\begin{displaymath}
    \sum (((xB^{-1}_\ci)(yC^{-1}_\cii))A_\ciii)A^{-1}_\civ = \sum (xB^{-1}_\ci)(yC^{-1}_\cii);
\end{displaymath}
while on the other
\begin{eqnarray*}
    \sum (((xB^{-1}_\ci)(yC^{-1}_\cii))A_\ciii)A^{-1}_\civ &=&
    \sum (((xB^{-1}_\ci)B_\cii)((yC^{-1}_\ciii)C_\civ))A^{-1}_\cv\\
    &=& (xy)A^{-1}_c.
\end{eqnarray*}
\end{proof}

Given $A \in G$ consider
\begin{displaymath}
    L_A\colon c \mapsto L_{1A_{c}},\quad R_A\colon c \mapsto R_{1A_{c}},\quad U_A\colon c \mapsto U_{1A_{c}}
\end{displaymath}
where $yU_x = \sum x_{(1)}yx_{(2)}$ for any $x, y \in U$. These maps are invertible with inverses
\begin{displaymath}
    (L_A)^{-1} = (L^{-1})_A, \quad (R_A)^{-1} = (R^{-1})_A, \text{ and } (U_A)^{-1} = (U^{-1})_A.
\end{displaymath}
In fact, $L_A, R_A, U_A \in G(\C,U)$.

\begin{lemma}
For any $A \in G(\C,U)$  we have that
\begin{displaymath}
    (L_A,U_A,L^{-1}_A), (R_A,R^{-1}_A,U_A), (U_A,L_A,R_A) \in \Atp_\C(U).
\end{displaymath}
\end{lemma}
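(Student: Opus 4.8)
The plan is to reduce the three membership claims to the Moufang--Hopf identities in $U$. First I would record that $a_c:=1A_c$ defines a coalgebra morphism $\C\to U$: applying conditions (2) and (3) with $y=1$ gives $\ep(1A_c)=\ep(c)$ and $\Delta(1A_c)=\sum 1A_{\ci}\otimes 1A_{\cii}$, so that spreading $\Delta(c)$ across a product is the same as comultiplying $u:=a_c$. Since by definition $(L_A)_c=L_{a_c}$, $(R_A)_c=R_{a_c}$, $(U_A)_c=U_{a_c}$, and $(L_A^{-1})_c=L_{S(a_c)}$, $(R_A^{-1})_c=R_{S(a_c)}$ (from $(L_A)^{-1}=(L^{-1})_A$ and $(R_A)^{-1}=(R^{-1})_A$), each autotopy condition $(xy)A_c=\sum(xB_{\ci})(yC_{\cii})$ unwinds into a pointwise identity in $U$ written entirely in terms of $u$ and its iterated coproduct. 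As $L_A,R_A,U_A$ already lie in $G$ by the preceding remark, only these identities need to be checked.

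For the triple $(U_A,L_A,R_A)$ the condition reads $\sum(\uui(xy))\uuii=\sum(\uui x)(y\uuii)$, which is \emph{exactly} the middle Moufang--Hopf identity; so this case is immediate. For $(L_A,U_A,L_A^{-1})$ the condition becomes $\sum((\uui x)\uuii)(S(\uuiii)y)=u(xy)$. I would apply the left Moufang--Hopf identity (\ref{eq:Moufang-Hopf}) to the first two legs of $\Delta^{(2)}(u)$, treating $w=S(\uuiii)y$ as a spectator, rewriting the left-hand side as $\sum \uui\bigl(x(\uuii(S(\uuiii)y))\bigr)$, and then collapse the inner factor by the antipode relation $\sum \uui(S(\uuii)v)=\ep(u)v$, which leaves $u(xy)$.

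For $(R_A,R_A^{-1},U_A)$ the condition becomes $\sum(xS(\uui))((\uuii y)\uuiii)=(xy)u$. Here I would first note the flexible law $\sum(\uui y)\uuii=\sum \uui(y\uuii)$, obtained from the right Moufang--Hopf identity by setting the first argument equal to $1$. Combined with the right Moufang--Hopf identity this yields the auxiliary identity $\sum z\bigl((\uui y)\uuii\bigr)=\sum\bigl((z\uui)y\bigr)\uuii$, which applied with $z=xS(\uui)$ turns the left-hand side into $\sum\bigl(((xS(\uui))\uuii)y\bigr)\uuiii$; the antipode relation $\sum(xS(\uui))\uuii=\ep(u)x$ then contracts the first two legs and produces $(xy)u$, as desired.

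The main obstacle is purely one of bookkeeping: keeping the Sweedler indices and the (nonassociative) parenthesizations correct, and justifying that a Moufang--Hopf identity may be applied to a chosen subset of the legs of an iterated coproduct while the remaining legs are carried along unchanged. This last point is legitimate precisely because those identities are natural in the coalgebra variable, so they hold after comultiplication and summation. I note finally that the three triples are interchanged by an $\Sthree$-symmetry of $\Atp_\C(U)$, so once that action is available any one of the three cases would imply the others; since that action is set up only afterwards, the direct verification above is the cleaner route here.
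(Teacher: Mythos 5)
Your proof is correct and follows essentially the same route as the paper: each triple is verified pointwise by setting $u = 1A_c$ and invoking the corresponding (left, right, middle) Moufang--Hopf identity together with the antipode axioms, with coassociativity justifying the ``spectator legs'' manipulations. The paper merely works out the $(L_A,U_A,L_A^{-1})$ case via the left Moufang--Hopf identity and declares the others analogous, so your treatment of all three triples is the same argument carried out in full.
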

\begin{proof}
This statement is a consequence of the Moufang-Hopf identities. For instance, the left Moufang-Hopf identity implies that
\begin{displaymath}
   (xy)(L_A)_c \hskip -1pt = \sum (xy)L_{1A_c} \hskip -1pt = \sum (xU_{1A_{\ci}})(yL_{1A_{\cii}S}) \hskip -1pt = \sum (x (U_A)_\ci)(y(L^{-1}_A)_\cii).
\end{displaymath}
hence $(L_A,U_A,L^{-1}_A) \in \Atp_\C(U)$.
\end{proof}

We leave the proof of the following lemma as an exercise.
\begin{lemma}
Let $B \in G(\C,U)$ be such that $1 B_c = \ep(c) 1$ for any $c \in \C$. Then $L_B = R_B = U_B = 1_{G(\C,U)}$.
\end{lemma}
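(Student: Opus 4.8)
The plan is to simply unwind the definitions of $L_B$, $R_B$, $U_B$ and exploit two elementary facts about the multiplication operators on the unital algebra $U$: they depend linearly on their index, and they reduce to the identity when that index is $1$. First I would record that $x \mapsto L_x$, $x \mapsto R_x$ and $x \mapsto U_x$ are linear; for $L$ and $R$ this is built into the fact that $L, R \in \Hom(U,\Endo(U))$ and follows from bilinearity of the product, while for $U$ it is immediate from $y U_x = \sum x_{(1)} y x_{(2)}$ together with linearity of $\Delta$ and of multiplication. Next I would observe that $L_1 = R_1 = U_1 = \Id_U$: indeed $1$ is a two-sided unit, so left and right multiplication by $1$ are the identity, while $\Delta(1) = 1 \otimes 1$ gives $y U_1 = 1 \cdot y \cdot 1 = y$.

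With these preliminaries in hand, the lemma follows by direct substitution. Since by hypothesis $1 B_c = \ep(c) 1$ for every $c \in \C$, linearity and $L_1 = \Id_U$ yield
\begin{displaymath}
    L_B \colon c \mapsto L_{1 B_c} = L_{\ep(c) 1} = \ep(c) L_1 = \ep(c)\Id_U,
\end{displaymath}
which is exactly the identity element $c \mapsto \ep(c)\Id_U$ of $G(\C,U)$; that is, $L_B = 1_{G(\C,U)}$. The identical computation, using $R_1 = \Id_U$ and $U_1 = \Id_U$ in place of $L_1 = \Id_U$, gives $R_B = 1_{G(\C,U)}$ and $U_B = 1_{G(\C,U)}$.

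I expect essentially no obstacle here: the statement is a bookkeeping exercise, and the only point requiring any care is keeping the conventions straight. Operators act on the right in this section, and $L_x$, $R_x$, $U_x$ are viewed as the values at $x$ of the coalgebra morphisms $L, R, U$, so that $L_B$ really is the composite map $c \mapsto L_{1B_c}$ and nothing more elaborate. Once one notes that the hypothesis forces the index $1B_c$ to collapse to the scalar multiple $\ep(c)1$, linearity together with $L_1 = R_1 = U_1 = \Id_U$ finishes the argument immediately.
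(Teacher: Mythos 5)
Your proof is correct: the paper actually leaves this lemma as an exercise, and your direct substitution is exactly the intended argument. Since $L_B \colon c \mapsto L_{1B_c} = L_{\ep(c)1} = \ep(c)\Id_U$ (and likewise for $R_B$, $U_B$, using $L_1=R_1=U_1=\Id_U$ and linearity of $x\mapsto L_x, R_x, U_x$), each of the three maps coincides with the identity $c \mapsto \ep(c)\Id_U$ of $G(\C,U)$, and nothing more is needed.
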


\begin{lemma}
\label{lem:middle_Hopf}
If  $(A, B, C) \in \Atp_\C(U)$, then $A = B * R_C$ and $A = C*L_B$. In particular, if $1B_c = \ep(c) 1$ for all $c\in \C$ then $A = C$.
\end{lemma}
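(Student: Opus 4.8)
The plan is to prove the three claims directly from the autotopy identity together with the coalgebra/group structure on $G(\C,U)$, evaluating the autotopy relation at the unit element $1$ and using the Moufang-Hopf axioms. Recall that $(A,B,C) \in \Atp_\C(U)$ means $(xy)A_c = \sum (xB_{c_{(1)}})(yC_{c_{(2)}})$ for all $x,y \in U$ and $c \in \C$.

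First I would establish $A = B * R_C$. The natural move is to specialize $y = 1$ in the autotopy identity. Setting $y = 1$ gives $\sum (x \cdot 1)A_c = x A_c = \sum (xB_{c_{(1)}})(1 \cdot C_{c_{(2)}})$. Now $1 C_{c_{(2)}}$ is an element of $U$, and right multiplication by it is exactly the operator $R_{1C_{c_{(2)}}} = (R_C)_{c_{(2)}}$. Hence $x A_c = \sum (x B_{c_{(1)}}) (R_C)_{c_{(2)}} = \sum x \, (B_{c_{(1)}} (R_C)_{c_{(2)}}) = x\,(B * R_C)_c$, using the definition of the convolution product on $\Hom(\C,\Endo(U))$. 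Since this holds for all $x$, we conclude $A_c = (B * R_C)_c$, i.e. $A = B * R_C$. The symmetric claim $A = C * L_B$ comes the same way by specializing $x = 1$ instead: $1 B_{c_{(1)}}$ feeds into a left multiplication operator $(L_B)_{c_{(1)}}$, and cocommutativity lets us reorder the two legs of $\Delta(c)$ so the $L_B$ factor sits in the correct convolution position to yield $A = C * L_B$.

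For the final assertion, suppose $1 B_c = \ep(c) 1$ for all $c$. Then $(L_B)_c = L_{1B_c} = L_{\ep(c)1} = \ep(c)\Id_U$, so $L_B = 1_{G(\C,U)}$ (this is precisely the preceding unlabeled lemma, which I may invoke). Substituting into $A = C * L_B$ gives $A = C * 1_{G} = C$, since $1_G$ is the identity of the convolution algebra. This immediately yields $A = C$.

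The only genuine subtlety, and the step I would be most careful about, is the convolution bookkeeping: one must check that evaluating $y=1$ (resp. $x=1$) really turns the relevant leg $C_{c_{(2)}}$ (resp. $B_{c_{(1)}}$) into the multiplication operator $(R_C)_{c_{(2)}}$ (resp. $(L_B)_{c_{(1)}}$) in the right convolution slot, and that the surviving leg lands as $A$ itself. This uses property (3) in the definition of $G(\C,U)$ (compatibility of $A$ with $\Delta$) together with cocommutativity of $\C$ to align the two comultiplication factors; otherwise the indices $c_{(1)},c_{(2)}$ would not match up correctly in the convolution $B * R_C$ versus $C * L_B$. Everything else is routine, so I would present the two specializations cleanly and then deduce $A=C$ in one line.
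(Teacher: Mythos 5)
Your proof is correct and follows essentially the same route as the paper's: specialize $y=1$ to read off $A = B*R_C$, specialize $x=1$ (reordering the legs of $\Delta(c)$ by cocommutativity) to get $A = C*L_B$, and then apply the preceding lemma to conclude $A = C$. The only difference is that you spell out the cocommutativity bookkeeping and the final deduction, which the paper leaves implicit.
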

\begin{proof}
Since $(xy)A_c = \sum (xB_\ci)(yC_\cii)$, then evaluating at $y = 1$ we obtain $xA_c = \sum (xB_\ci)(1C_\cii) = (xB_\ci)R_{1C_{\cii}} = x(B*R_C)_c$. The other equality in the statement follows from evaluating at $x = 1$.
\end{proof}

\begin{lemma}
\label{lem:decomposition}
Any $(A,B,C) \in \Atp_\C(U)$ can be written as
\begin{displaymath}
(A, B, C) = (D',D,D')(R^{-1}_B, R_B, U^{-1}_B)
\end{displaymath}
for some $(D',D,D')\in \Atp_\C(U)$.
\end{lemma}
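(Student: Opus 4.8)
The plan is to imitate the decomposition of autotopies used in the proof of Theorem~\ref{thm:GrZa}: translate the middle component so that it fixes the unit, and then invoke the ``middle'' lemma to force the two outer components to agree. By the lemma recorded just above (asserting that $(R_A,R^{-1}_A,U_A)\in\Atp_\C(U)$ for every $A\in G(\C,U)$), the triple $(R_B,R^{-1}_B,U_B)$ is an autotopy; and since $\Atp_\C(U)$ is a group with componentwise inverses, its inverse $(R^{-1}_B,R_B,U^{-1}_B)$ also lies in $\Atp_\C(U)$. I would therefore simply set
\[
(D',D,D'):=(A,B,C)(R_B,R^{-1}_B,U_B),
\]
which is an autotopy as a product of autotopies, and observe that
\[
(D',D,D')(R^{-1}_B,R_B,U^{-1}_B)=(A,B,C),
\]
because the two right-hand factors are mutually inverse. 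It then remains only to verify that the first and third components of $(D',D,D')$ actually coincide; that is where I still have to write the correct symbol $D'$ in both outer slots.

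The crux is the identity $1D_c=\ep(c)1$ for the middle component $D=B*R^{-1}_B$. First I note that, taking $y=1$ in conditions (2) and (3) defining $G$, the map $c\mapsto 1B_c$ is a morphism of coalgebras; in particular $\Delta(1B_c)=\sum 1B_{\ci}\otimes 1B_{\cii}$ and $\ep(1B_c)=\ep(c)$. Writing out the convolution, using $(R^{-1}_B)_c=R_{S(1B_c)}$ together with $yR_z=yz$, I compute
\[
1D_c=\sum (1B_{\ci})\,(R^{-1}_B)_{\cii}=\sum (1B_{\ci})\,S(1B_{\cii})=\ep(1B_c)1=\ep(c)1,
\]
where the penultimate equality is the antipode axiom $\sum m_{(1)}S(m_{(2)})=\ep(m)1$ applied to $m=1B_c$ (this is the $v=1$ instance of $\sum(v\uui)S(\uuii)=\ep(u)v$). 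Thus $1D_c=\ep(c)1$ for all $c\in\C$, so Lemma~\ref{lem:middle_Hopf} yields that the first and third components of $(D',D,D')$ coincide, which is exactly the asserted form.

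The only genuine computation is the display for $1D_c$, and it is precisely here that the specific choice of the translating autotopy $(R_B,R^{-1}_B,U_B)$ matters: the $R^{-1}_B$ placed in the middle slot is designed so that the antipode cancels the right translation by $1B_c$. Everything else is bookkeeping with the group structure of $\Atp_\C(U)$ and with the fact, proved just above, that the relevant translating triples are autotopies. I expect no real obstacle beyond keeping the operators acting on the right (as warned at the end of Section~\ref{sec:groups_with_triality}) so that the convolution and the multiplication operators compose in the correct order.
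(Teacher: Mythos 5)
Your proposal is correct and is essentially the paper's own proof: both take $(D',D,D')=(A,B,C)(R_B,R^{-1}_B,U_B)=(A*R_B,\,B*R^{-1}_B,\,C*U_B)$, establish the key identity $1(B*R^{-1}_B)_c=\sum(1B_{\ci})((1B_{\cii})S)=\ep(c)1$ via the antipode axiom, and then apply Lemma~\ref{lem:middle_Hopf} to conclude that the outer components coincide. You merely make explicit some bookkeeping the paper leaves implicit (that $c\mapsto 1B_c$ is a coalgebra morphism, and that the translating triple and its inverse lie in $\Atp_\C(U)$).
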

\begin{proof}
The only possibility for $(D',D,D')$ is $(A*R_B,B*R^{-1}_B,C*U_B)$ once we have demonstrated that $A*R_B = C*U_B$; but this relation is a consequence of  $1(B*R^{-1}_B)_c =\sum (1B_\ci)((1B_\cii)S) = \ep(c) 1$ and Lemma \ref{lem:middle_Hopf}.
\end{proof}

Given $A \in G(\C,U)$ consider
\begin{displaymath}
A^S \colon c \mapsto SA_c S.
\end{displaymath}
This map is invertible with inverse $(A^{-1})^S$:
\begin{displaymath}
    ((A^{-1})^S*A^S)_c = \sum SA^{-1}_\ci S S A_\cii S = \ep(c) \Id_U = (A^S*(A^{-1})^S)_c.
\end{displaymath}
In fact, it can be easily seen that $A^S \in G(\C,U)$ and that $A \mapsto A^S$ is an involutive automorphism of $G(\C,U)$. In the following we will write $A^{-S}$ instead of $(A^{-1})^S$.

\begin{proposition}
Let $U$ be a cocommutative Moufang-Hopf algebra and $\C$ a cocommutative coalgebra. Then $\Atp_\C(U)$ is a group with triality relative to the automorphisms $\rho, \sigma$ given by
\begin{displaymath}
    (A,B,C)^\rho = (B^S,C,A^S) \quad\text{and}\quad (A,B,C)^\sigma = (C, B^S, A)
\end{displaymath}
for any $(A, B, C) \in \Atp_\C(U)$.
\end{proposition}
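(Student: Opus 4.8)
The plan is to mirror the group-theoretic argument in the proof of Theorem~\ref{thm:GrZa}, since $\Atp_\C(U)$ is the natural Hopf-algebra analog of $\Atp(Q)$. First I would check that $\rho$ and $\sigma$ are well-defined automorphisms of $\Atp_\C(U)$: that $(A,B,C)^\rho$ and $(A,B,C)^\sigma$ actually land back in $\Atp_\C(U)$, and that they respect the componentwise product. This amounts to verifying the autotopy condition $(xy)(B^S)_c = \sum(x\,C_{c_{(1)}})(y\,(A^S)_{c_{(2)}})$ and its $\sigma$-analog, which should follow from applying the antipode $S$ to the defining identity $(xy)A_c = \sum(xB_{c_{(1)}})(yC_{c_{(2)}})$ together with the fact (noted before Lemma~\ref{lem:decomposition}) that $A\mapsto A^S$ is an involutive automorphism of $G(\C,U)$. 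I would then confirm the relations $\sigma^2=\rho^3=\Id$ and $\sigma\rho=\rho^2\sigma$ directly from the formulas, using $(A^S)^S = A$.

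**Reducing the triality identity via Lemma~\ref{lem:decomposition}.**
The heart of the argument is verifying the triality condition~(\ref{eq:triality_Hopf}) for the group $\Atp_\C(U)$, i.e.\ that $\sum P(g_{(1)})\rho(P(g_{(2)}))\rho^2(P(g_{(3)}))$ collapses to the identity for every $g=(A,B,C)$, where $P(g)=\sum\sigma(g_{(1)})S(g_{(2)})$ reduces in the group-like setting to $g^{-1}g^\sigma$. The key simplification, exactly as in Theorem~\ref{thm:GrZa}, is to use Lemma~\ref{lem:decomposition} to factor $(A,B,C)=(D',D,D')(R_B^{-1},R_B,U_B^{-1})$. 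Elements of the first type have equal outer components and a middle component fixed under $A\mapsto A^S$ (since $1D'_c=\ep(c)1$ forces $D'^{S}=D'$ by the analog of Lemma~\ref{lem:middle}), while the second factor is built purely from the explicit operators $R_B$ and $U_B$. Because $\rho$ and $\sigma$ are automorphisms, the triality expression factors along this decomposition, so I would reduce the check to the two factor types separately.

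**The main computation.**
For the symmetric factor $(D',D,D')$ with $D'^S=D'$, the formulas for $\rho$ and $\sigma$ simplify dramatically, and $g^{-1}g^\sigma$ becomes tractable; the triality product should telescope to the identity as in the group case. For the factor $(R_B^{-1},R_B,U_B^{-1})$, I expect the computation to reduce, after applying $\rho$ and $\sigma$, to an identity among the operators $L_{1B_c}$, $R_{1B_c}$, $U_{1B_c}$ in $G(\C,U)$, and this is precisely where the Moufang-Hopf identities enter—analogous to the step $L_x^{-1}JR_x^{-1}JR_xJL_xJ=\Id$ in the proof of Theorem~\ref{thm:GrZa}, with $J$ now replaced by the involution $A\mapsto A^S$. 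I anticipate this final operator identity to be the main obstacle: one must track the convolution products and coproducts carefully and invoke Lemma~\ref{lem:multiplication_algebra} (the $P,L,R,U$ relations) to force the cancellation. Once both factor types are handled, the triality condition holds for the general $(A,B,C)$, completing the proof.
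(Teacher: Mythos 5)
Your preparatory steps (closure of $\Atp_\C(U)$ under $\rho$ and $\sigma$, the relations $\sigma^2=\rho^3=\Id$ and $\sigma\rho=\rho^2\sigma$, and the fact that a symmetric autotopy $(D',D,D')$ has $S$-fixed middle component) agree with the paper, although note it is the middle component $D$, not $D'$, that satisfies $1D_c=\ep(c)1$. The genuine gap is your central reduction: ``Because $\rho$ and $\sigma$ are automorphisms, the triality expression factors along this decomposition, so I would reduce the check to the two factor types separately.'' That principle is false. Writing $P(g)=g^{-1}g^\sigma$ and $T(g)=P(g)P(g)^\rho P(g)^{\rho^2}$, the map $T$ is not multiplicative: one has the twisted rule $P(gh)=h^{-1}P(g)h^\sigma$, so $T(g)=1=T(h)$ does not imply $T(gh)=1$. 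Concretely, in the free group on $x_1,x_2,x_3$ with $\Sthree_3$ permuting the generators ($\sigma=(1\,2)$, $\rho=(1\,2\,3)$) one checks $T(x_1)=1$ and $T(x_3)=1$, yet $T(x_1x_3)= x_3^{-1}x_1^{-1}x_2x_3\cdot x_1^{-1}x_2^{-1}x_3x_1\cdot x_2^{-1}x_3^{-1}x_1x_2$ is a reduced, nontrivial word. So verifying triality for $(D',D,D')$ and for $(R_B^{-1},R_B,U_B^{-1})$ separately does not, as stated, yield triality for their product.

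The reduction you want is salvageable, but by a mechanism your write-up never supplies: the \emph{left} factor $(D',D,D')$ is fixed by $\sigma$ (its outer components are equal and $D^S=D$, so $(D',D,D')^\sigma=(D',D^S,D')=(D',D,D')$), and whenever $g^\sigma=g$ the twisted rule gives $P(gh)=h^{-1}g^{-1}g^\sigma h^\sigma=P(h)$. Hence triality for $(A,B,C)$ is literally equivalent to triality for the right factor $(R_B^{-1},R_B,U_B^{-1})$ alone; your separate check of the symmetric factor is then unnecessary (there $P=1$ trivially, not merely ``tractable''), and the fact that the $\sigma$-fixed factor must sit on the left shows exactly the order-sensitivity that naive factoring ignores. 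With this repair, finishing by verifying triality for $(R_B^{-1},R_B,U_B^{-1})$ via the Moufang--Hopf identities would give a correct proof, and one genuinely different from the paper's: the paper never splits the triality check over the factors, but instead writes the triality condition as three convolution identities in $G(\C,U)$, such as $A^{-1}*C*B^{-S}*B*C^{-S}*A^S=1_{G(\C,U)}$, and collapses them by substituting $A=C*L_B$ (Lemma~\ref{lem:middle_Hopf}) and $B=D*R_B$ with $D^S=D$ (Lemma~\ref{lem:decomposition}), using only $L_B^S=R_B^{-1}$ rather than any further Moufang--Hopf computation.
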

\begin{proof}
 We can compute $\sum ((\xxi S)B_\ci S)(((\xxii S) (\xxiii y))A_\cii)$ either as
\begin{displaymath}
    \sum (x SB_\ci S)(yA_\cii)
\end{displaymath}
or as
\begin{displaymath}
    \sum (\xxi SB_\ci S)((\xxii SB_\cii) ((\xxiii y)C_\ciii))
    = (xy)C_c.
\end{displaymath}
This proves that $(C, B^S, A) \in \Atp_\C(U)$. A similar computation for
\begin{displaymath}
    \sum ((x y_{(1)})(y_{(2)}S))A_\ci (y_{(3)}SC_\cii S)
\end{displaymath}
gives $(B,A,C^S) \in \Atp_\C(U)$. Starting with $(C,B^S,A)$ instead of $(A,B,C)$ we get $(B^S,C,A^S) \in \Atp_\C(U)$. The maps $\rho \colon (A,B,C) \mapsto (B^S,C,A^S)$ and $\sigma \colon (A,B,C) \mapsto (C,B^S,A)$ so defined verify the relations  $\sigma^2 = \Id_{\Atp_\C(U)} = \rho^3$, $\sigma \rho = \rho^2 \sigma$.

At this point we should  observe that if $(A,B,A) \in \Atp_\C(U)$,  then $(A,B^S,A) = (A,B,A)^\sigma \in \Atp_\C(U)$ so $(1_{G(\C,U)},B^{-1}*B^S,1_{G(\C,U)}) \in \Atp_\C(U)$, i.e., $B^{-1}*B^S = 1_{G(\C,U)}$. Therefore $B^S = B$.

The proof of relation (\ref{eq:triality_Group}) for $\Atp_\C(U)$ is similar to that of Theorem \ref{thm:GrZa}. The equalities to be checked here are
\begin{eqnarray*}
    A^{-1}*C*B^{-S}*B*C^{-S}*A^S &=& 1_{G(\C,U)},\\
    B^{-1}*B^S*C^{-1}*A*A^{-S}*C^S &=& 1_{G(\C,U)} \text{ and}\\
    C^{-1}*A*A^{-S}*C^S*B^{-1}*B^S &=& 1_{G(\C,U)}.
\end{eqnarray*}
We will only prove the first one. By Lemma \ref{lem:middle_Hopf} we can write $A = C*L_B$ so $A^{-1}*C*B^{-S}*B*C^{-S}*A^S = L^{-1}_B*B^{-S}*B*L^S_B$. Now by Lemma \ref{lem:decomposition}   we decompose $(A,B,C)$ as $(D',D,D')(R^{-1}_B,R_B,U^{-1}_B)$ to obtain that $B = D*R_B$ with $D^S = D$. Thus, $A^{-1}*C*B^{-S}*B*C^{-S}*A^S = L^{-1}_B*B^{-S}*B*L^S_B = L^{-1}_B*R^{-S}_B*R_B*L^S_B = 1_{G(\C,U)}$ since $L^S_B = R^{-1}_B$.
\end{proof}

The loop $\M(\Atp(\C,U))$ consists of the elements
\begin{displaymath}
    (A,B,C)^{-1}(A,B,C)^\sigma = (A^{-1}*C,B^{-1}*B^S,C^{-1}*A) = (L^{-1}_B,U^{-1}_B,L_B).
\end{displaymath}
In fact, for any $B \in G(\C,U)$ we have that $(U_B,L_B,R_B) \in \Atp(\C,U)$ and $L_{L_B} = L_B$,  so we can identify $\M(\Atp(\C,U))$ with $\{ L_B \mid B \in G(\C,U)\}$. The product on $\Atp(\C,U)$ is given by
\begin{eqnarray*}
    (L^{-1}_B,U^{-1}_B,L_B) \cdot (L^{-1}_{B'},U^{-1}_{B'},L_{B'}) \hskip -2cm &&\\
    &=&
    (L_B,U_B,L^{-1}_B)^\rho (L^{-1}_{B'},U^{-1}_{B'},L_{B'}) (L_B,U_B,L^{-1}_B)^{\rho^2} \\
    &=& (U^S_B,L^{-1}_B,L^S_B)(L^{-1}_{B'},U^{-1}_{B'},L_{B'})(L^{-S}_B,L^S_B,U_B)\\
    &=& (U^{-1}_B,L^{-1}_B,R^{-1}_B)(L^{-1}_{B'},U^{-1}_{B'},L_{B'})(R_B,R^{-1}_B,U_B)\\
    &=& (U^{-1}_B*L^{-1}_{B'}*R_B,
    L^{-1}_B*U^{-1}_{B'}*R^{-1}_B,R^{-1}_B*L_{B'}*U_B).
\end{eqnarray*}
By the middle Moufang-Hopf identity, the last component $R^{-1}_B*L_{B'}*U_B$ of this triple acts by
\begin{displaymath}
x(R^{-1}_B*L_{B'}*U_B)_c = \sum ((1B_\ci)(1B'_\ci))x
\end{displaymath}
so, under the identification of $\M(\Atp(\C,U))$ with $\{ L_B \mid B \in G(\C,U)\}$,  the product is given by
\begin{displaymath}
L_B \cdot L_{B'} = L_{L_B*R_{B'}}.
\end{displaymath}

\begin{proposition}
 $\M(\Atp(\C,U))$ and $\morco(\C,U)$ are isomorphic Moufang loops.
\end{proposition}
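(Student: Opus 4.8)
The plan is to exhibit an explicit isomorphism and check that it intertwines the two convolution products. Recall from the preceding discussion that $\M(\Atp(\C,U))$ has been identified with $\{ L_B \mid B \in G(\C,U)\}$, the loop product being $L_B \cdot L_{B'} = L_{L_B * R_{B'}}$. I would define
\begin{displaymath}
    \Phi \colon \M(\Atp(\C,U)) \to \morco(\C,U), \qquad L_B \mapsto \beta_B, \quad \beta_B \colon c \mapsto 1 B_c = 1(L_B)_c,
\end{displaymath}
where the last equality uses $1(L_B)_c = 1 L_{1 B_c} = 1 B_c$. The first step is to confirm that $\beta_B$ really lands in $\morco(\C,U)$: evaluating the two compatibility conditions defining $G(\C,U)$ at $y = 1$, and using $\ep(1)=1$ and $\Delta(1) = 1\otimes 1$, yields precisely $\ep(1 B_c) = \ep(c)$ and $\Delta(1 B_c) = \sum 1 B_\ci \otimes 1 B_\cii$, i.e.\ $\beta_B$ is counital and comultiplicative.

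Next I would dispatch well-definedness and the homomorphism property. Since $L_m = L_{m'}$ forces $m = m'$ (evaluate at $1$), the operator $L$ is injective, so $\Phi$ depends only on $L_B$ and is well defined. For the homomorphism property I would use the product formula above: writing $D = L_B * R_{B'}$, one unwinds
\begin{displaymath}
    \beta_D(c) = 1 D_c = \sum 1(L_B)_\ci (R_{B'})_\cii = \sum (1 B_\ci)(1 B'_\cii),
\end{displaymath}
using $1(L_B)_\ci = 1 B_\ci$ and $y(R_{B'})_\cii = y(1 B'_\cii)$. This is exactly $c(\beta_B * \beta_{B'})$, the convolution of $\beta_B$ and $\beta_{B'}$ in $\morco(\C,U)$, so $\Phi$ is a loop homomorphism.

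Finally I would establish bijectivity. Injectivity is immediate from the injectivity of $L$. For surjectivity, given $f \in \morco(\C,U)$ I would set $B_f \colon c \mapsto L_{f(c)}$ and check that $B_f \in G(\C,U)$: its convolution inverse is $c \mapsto L_{S(f(c))}$, which follows from the antipode identities of $U$ applied to $m = f(c)$ together with $\sum f(\ci)\otimes f(\cii) = \Delta(f(c))$, while the counit and comultiplication conditions reduce directly to those of $f$. Since $(L_{B_f})_c = L_{1(B_f)_c} = L_{f(c)} = (B_f)_c$, one has $L_{B_f} = B_f$, whence $\Phi(L_{B_f}) = f$.

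The main obstacle is precisely the verification that $B_f$ belongs to $G(\C,U)$ --- in particular its convolution-invertibility and the comultiplicativity constraint --- for this is where the antipode axioms of the Moufang-Hopf algebra and the coalgebra-morphism properties of $f$ genuinely enter. Everything else is routine bookkeeping in Sweedler notation, entirely parallel to the group-theoretic arguments of Section~\ref{sec:groups_with_triality}.
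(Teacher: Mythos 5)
Your proof is correct and is essentially the paper's own argument: the paper sets up the same correspondence in the opposite direction, $\theta \mapsto L_\theta$ with $(L_\theta)_c = L_{c\theta}$ and inverse $B \mapsto (c \mapsto 1B_c)$, identifies $\{L_\theta \mid \theta \in \morco(\C,U)\}$ with $\{L_B \mid B \in G(\C,U)\}$, and checks the same product identity $L_B \cdot L_{B'} = L_{L_B * R_{B'}}$ against convolution. Your version merely makes explicit the membership verifications ($\beta_B \in \morco(\C,U)$ and $B_f \in G(\C,U)$) that the paper states without detail.
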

\begin{proof}
Given a coalgebra morphism $\theta \colon \C \to U$, define $L_\theta\colon \C \to \Endo(U)$ by $L_\theta \colon c \mapsto L_{c\theta}$. Since $L_\theta = L_\theta'$ if and only if $\theta = \theta'$, we can identify $\morco(\C,U)$ with $\{ L_\theta \mid \theta \in \morco(\C,U)\}$.

The elements $L_\theta$ with $\theta \in \morco(\C,U)$ belong to $G(\C,U)$, and they also satisfy $L_{L_\theta} = L_\theta$ since $(L_{L_\theta})_c = L_{1(L_\theta)_c} = L_{c\theta} = (L_\theta)_c$. Hence, $\{ L_\theta \mid \theta \in \morco(\C,U)\} \subseteq \{ L_B \mid B \in G(\C,U)\}$. The other inclusion also holds. Given $B \in G(\C,U)$, define $\theta$ by $c \theta = 1B_c$. Then $L_\theta = L_B$. The product on $\{ L_\theta \mid \theta \in \morco(\C,U)\}$  is
\begin{eqnarray*}
x(L_\theta \cdot L_{\theta'})_c &=& x(L_{L_\theta} \cdot L_{L_{\theta'}})_c = \sum ((1(L_\theta)_\ci)(1(L_{\theta'})_\cii))x = \sum (\theta_\ci \theta'_\cii)x\\
&=& x(L_{\theta * \theta'})_c.
\end{eqnarray*}
Consequently the Moufang loop $\morco(\C,U)$ is isomorphic to $\M(\Atp_\C(U))$.
\end{proof}


\begin{bibdiv}
\begin{biblist}

\bib{Br58}{book}{
   author={Bruck, Richard Hubert},
   title={A survey of binary systems},
   series={Ergebnisse der Mathematik und ihrer Grenzgebiete. Neue Folge,
   Heft 20. Reihe: Gruppentheorie},
   publisher={Springer Verlag},
   place={Berlin},
   date={1958},
   pages={viii+185},
   review={\MR{0093552 (20 \#76)}},
}

\bib{Do78}{article}{
   author={Doro, Stephen},
   title={Simple Moufang loops},
   journal={Math. Proc. Cambridge Philos. Soc.},
   volume={83},
   date={1978},
   number={3},
   pages={377--392},
   issn={0305-0041},
   review={\MR{0492031 (58 \#11195)}},
}

\bib{GaHa05}{article}{
   author={Gagola, S. M., III},
   author={Hall, J. I.},
   title={Lagrange's theorem for Moufang loops},
   journal={Acta Sci. Math. (Szeged)},
   volume={71},
   date={2005},
   number={1-2},
   pages={45--64},
   issn={0001-6969},
   review={\MR{2160355 (2006f:20079)}},
}

\bib{Gl68}{article}{
   author={Glauberman, George},
   title={On loops of odd order. II},
   journal={J. Algebra},
   volume={8},
   date={1968},
   pages={393--414},
   issn={0021-8693},
   review={\MR{0222198 (36 \#5250)}},
}

\bib{Gr03}{article}{
   author={Grishkov, Alexander},
   title={Lie algebras with triality},
   journal={J. Algebra},
   volume={266},
   date={2003},
   number={2},
   pages={698--722},
   issn={0021-8693},
   review={\MR{1995132 (2004h:17019)}},
}
		
\bib{GrZa05}{article}{
   author={Grishkov, Alexander N.},
   author={Zavarnitsine, Andrei V.},
   title={Lagrange's theorem for Moufang loops},
   journal={Math. Proc. Cambridge Philos. Soc.},
   volume={139},
   date={2005},
   number={1},
   pages={41--57},
   issn={0305-0041},
   review={\MR{2155504 (2006d:20122)}},
}

\bib{GrZa06}{article}{
   author={Grishkov, Alexander N.},
   author={Zavarnitsine, Andrei V.},
   title={Groups with triality},
   journal={J. Algebra Appl.},
   volume={5},
   date={2006},
   number={4},
   pages={441--463},
   issn={0219-4988},
   review={\MR{2239539 (2007g:20062)}},
}
		
\bib{GrZa09}{article}{
   author={Grishkov, Alexander N.},
   author={Zavarnitsine, Andrei V.},
   title={Sylow's theorem for Moufang loops},
   journal={J. Algebra},
   volume={321},
   date={2009},
   number={7},
   pages={1813--1825},
   issn={0021-8693},
   review={\MR{2494749 (2010h:20159)}},
}

\bib{Ha10}{article}{
    author = {Hall, Jonathan I.},
     title = {On Mikheev's construction of enveloping groups},
   journal = {Comment. Math. Univ. Carolin.},
    volume = {51},
      date = {2010},
    number = {2},
     pages = {245--252},
}

\bib{Ha1}{article}{
    author = {Hall, Jonathan I.},
     title = {Moufang loops and groups with triality are essentially the same thing},
    status = {submitted}
}

\bib{Mi92}{article}{
   author={Mikheev, P. O.},
   title={On the embedding of Mal\cprime tsev algebras into Lie algebras},
   language={Russian, with Russian summary},
   journal={Algebra i Logika},
   volume={31},
   date={1992},
   number={2},
   pages={167--173, 221},
   issn={0373-9252},
   translation={
      journal={Algebra and Logic},
      volume={31},
      date={1992},
      number={2},
      pages={106--110 (1993)},
      issn={0002-5232},
   },
   review={\MR{1289030}},
}

\bib{Mi93}{article}{
   author={Mikheev, P. O.},
   title={Groups that envelop Moufang loops},
   language={Russian, with Russian summary},
   journal={Uspekhi Mat. Nauk},
   volume={48},
   date={1993},
   number={2(290)},
   pages={191--192},
   issn={0042-1316},
   translation={
      journal={Russian Math. Surveys},
      volume={48},
      date={1993},
      number={2},
      pages={195--196},
      issn={0036-0279},
   },
   review={\MR{1239875 (94g:20098)}},
}
		
\bib{Pe07}{article}{
   author={P{\'e}rez-Izquierdo, Jos{\'e} M.},
   title={Algebras, hyperalgebras, nonassociative bialgebras and loops},
   journal={Adv. Math.},
   volume={208},
   date={2007},
   number={2},
   pages={834--876},
   issn={0001-8708},
   review={\MR{2304338 (2008f:17006)}},
}

\bib{PeSh04}{article}{
   author={P{\'e}rez-Izquierdo, Jos{\'e} M.},
   author={Shestakov, Ivan P.},
   title={An envelope for Malcev algebras},
   journal={J. Algebra},
   volume={272},
   date={2004},
   number={1},
   pages={379--393},
   issn={0021-8693},
   review={\MR{2029038 (2004j:17040)}},
}
		
\bib{Pf90}{book}{
   author={Pflugfelder, Hala O.},
   title={Quasigroups and loops: introduction},
   series={Sigma Series in Pure Mathematics},
   volume={7},
   publisher={Heldermann Verlag},
   place={Berlin},
   date={1990},
   pages={viii+147},
   isbn={3-88538-007-2},
   review={\MR{1125767 (93g:20132)}},
}

\bib{Sch95}{book}{
   author={Schafer, Richard D.},
   title={An introduction to nonassociative algebras},
   note={Corrected reprint of the 1966 original},
   publisher={Dover Publications Inc.},
   place={New York},
   date={1995},
   pages={x+166},
   isbn={0-486-68813-5},
   review={\MR{1375235 (96j:17001)}},
}

\bib{Sm99}{book}{
   author={Smith, Jonathan D. H.},
   author={Romanowska, Anna B.},
   title={Post-modern algebra},
   series={Pure and Applied Mathematics (New York)},
   note={A Wiley-Interscience Publication},
   publisher={John Wiley \& Sons Inc.},
   place={New York},
   date={1999},
   pages={xii+370},
   isbn={0-471-12738-8},
   review={\MR{1673047 (2000d:00001)}},
}

\bib{ZSSS}{book}{
   author={Zhevlakov, K. A.},
   author={Slin{\cprime}ko, A. M.},
   author={Shestakov, I. P.},
   author={Shirshov, A. I.},
   title={Rings that are nearly associative},
   series={Pure and Applied Mathematics},
   volume={104},
   note={Translated from the Russian by Harry F. Smith},
   publisher={Academic Press Inc. [Harcourt Brace Jovanovich Publishers]},
   place={New York},
   date={1982},
   pages={xi+371},
   isbn={0-12-779850-1},
   review={\MR{668355 (83i:17001)}},
}

\end{biblist}
\end{bibdiv}

\end{document}